\newcommand{\RNum}[1]{\lowercase\expandafter{\romannumeral #1\relax}}
\title{A generalization of Noel-Reed-Wu Theorem to signed graphs\thanks{Supported by the
National Natural Science Foundation of China under Grant No.\,11471273 and 11561058.}}
\author
{
Wei Wang$^{\rm a,b}$,
Jianguo Qian$^{\rm a}$\thanks{Corresponding author: jgqian@xmu.edu.cn.}
\\
{\footnotesize$^{\rm a}$School of Mathematical Sciences, Xiamen University, Xiamen 361005, P. R. China}\\
{\footnotesize$^{\rm b}$College of Information Engineering, Tarim University, Alar 843300, P. R. China}
}
\date{}
\begin{document}
\maketitle
\newtheorem{lem}{Lemma}[section]
\newtheorem{thm}[lem]{Theorem}
\newtheorem{prop}[lem]{Proposition}
\newtheorem{cor}[lem]{Corollary}
\newtheorem{conj}[lem]{Conjecture}
\newtheorem{defi}[lem]{Definition}
\newtheorem{prob}[lem]{Problem}
\newtheorem*{pf}{Proof}
\begin{abstract}
Let $\Sigma$ be a  signed graph where two edges joining the same pair of vertices with opposite signs are allowed.  The zero-free chromatic number $\chi^*(\Sigma)$ of $\Sigma$ is the minimum even integer $2k$ such that $G$ admits a proper coloring $f\colon\,V(\Sigma)\mapsto \{\pm 1,\pm 2,\ldots,\pm k\}$. The zero-free list chromatic number $\chi^*_l(\Sigma)$ is the list version of zero-free chromatic number.  $\Sigma$ is called zero-free chromatic-choosable if $\chi^*_l(\Sigma)=\chi^*(\Sigma)$.  We show that if $\Sigma$ has at most $\chi^*(\Sigma)+1$ vertices then $\Sigma$ is zero-free chromatic-choosable. This result strengthens Noel-Reed-Wu Theorem which states that every graph $G$ with at most $2\chi(G)+1$ vertices is chromatic-choosable, where $\chi(G)$ is the chromatic number of $G$.
\end{abstract}
\noindent\textbf{Key words.} signed graph; list coloring; chromatic-choosable

\noindent\textbf{AMS subject classification.} 05C15
\section{Introduction}
A graph is called \emph{chromatic-choosable} \cite{Ohba2002} if its list chromatic number equals its chromatic number. Characterizing which graphs are chromatic-choosable is a challenging problem  in the field of list coloring. A recent breakthrough is the following theorem of Noel-Reed-Wu \cite{Noel2015}, which was conjectured by Ohba \cite{Ohba2002} in 2002.
\begin{thm}\cite{Noel2015}\label{NRWthm}
If $|V(G)|\le 2\chi(G)+1$, then $G$ is chromatic-choosable.
\end{thm}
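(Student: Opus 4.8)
Since $\chi(G)\le\chi_l(G)$ holds for every graph, the whole content is the reverse inequality: I must show $\chi_l(G)\le\chi(G)$ under the hypothesis $|V(G)|\le 2\chi(G)+1$. Write $k=\chi(G)$. The first and cleanest step is to reduce to complete multipartite graphs. Fix an optimal proper coloring of $G$; its $k$ color classes are nonempty independent sets, and passing to the complete $k$-partite graph $H$ on these same classes only adds edges, so $\chi_l(G)\le\chi_l(H)$, while $\chi(H)=k$ and $|V(H)|=|V(G)|\le 2k+1$. Thus it suffices to prove the theorem for $H$ complete multipartite, and there the list-coloring problem becomes transparent: vertices in different parts are adjacent, so each color can be used inside at most one part, and an $L$-coloring is exactly a way of covering every vertex by a color from its list subject to this one-part-per-color restriction. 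I will also assume, harmlessly, that $|L(v)|=k$ for every $v$.

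Next I would assume a counterexample exists and fix one, say $H=K_{n_1,\dots,n_k}$ together with a bad $k$-list assignment $L$, chosen minimal in $|V(H)|$ (breaking ties by, e.g., $\sum_i n_i^2$). Two normalizing reductions clear away the easy structure. First, if some part $V_i$ with $|V_i|\ge 2$ has a color $c$ lying in the list of each of its vertices, I color $V_i$ entirely with $c$, delete $V_i$, and remove $c$ from every remaining list; the residual graph is complete $(k-1)$-partite on at most $2k+1-|V_i|\le 2(k-1)+1$ vertices with $(k-1)$-lists, so by minimality it is colorable, and lifting the coloring contradicts the choice of $L$. Second, if two vertices of a common part have identical lists I identify them, strictly lowering $|V(H)|$ while preserving $k$, the hypothesis, and noncolorability. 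Hence in the minimal counterexample no large part shares a common color and all lists within a part are distinct.

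The core of the argument is then a counting argument on this reduced, ``spread-out'' instance. I would phrase colorability as the solvability of a transversal/packing problem — assign the parts to colors so that each vertex gets a list color and each color serves only one part — and attack its deficiency version through a Hall-type inequality: for each set $S$ of colors one weighs the vertices that can be colored only from $S$ against the total coloring capacity $S$ offers across the $k$ parts, and shows that the hypothesis $\sum_i n_i\le 2k+1$ rules out every obstruction, forcing a proper $L$-coloring and the desired contradiction. To make the induction close I expect to need a slightly stronger inductive statement, with a tailored monotone quantity that strictly decreases under the above reductions while the budget $2k+1$ is maintained.

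I expect essentially all the difficulty to sit in this last step, and to be concentrated in the stubborn configurations where $H$ consists of many parts of size $2$ together with a few parts of size $3$ — precisely the instances, like the near-extremal cocktail-party-type graphs on $2k$ or $2k+1$ vertices, for which greedy coloring collapses and the naive Hall condition is too weak. Managing the delicate interaction between the size-$2$ and size-$3$ parts, and selecting exactly which part or color to peel off at each inductive step so that the residual instance never violates $|V|\le 2\chi+1$, is the real crux; the reduction to complete multipartite graphs and the normalizing reductions before it are comparatively routine bookkeeping.
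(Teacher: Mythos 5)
Your setup is fine as far as it goes: the reduction to complete multipartite graphs, the passage to a vertex-minimal counterexample with lists of size exactly $k$, and the two normalizations (no part of size at least $2$ whose lists share a common color, no two identical lists inside a part) are all correct, and the first normalization is precisely the analogue of Corollary \ref{nonsingledisjoint} in this paper. But everything after that, which you yourself call ``the real crux'', is missing. You compress the entire content of the theorem into one sentence: that a ``Hall-type inequality'' on sets of colors, weighing the vertices colorable only from $S$ against the capacity $S$ offers, ``rules out every obstruction'' under $\sum_i n_i\le 2k+1$. That claim is never stated precisely, is not proved, and cannot be proved in this naive form: deficiency conditions of Hall type are necessary for an $L$-coloring but far from sufficient here, because a color may appear in the lists of several parts while being allowed to serve at most one of them, and the assignment of colors to parts interacts globally. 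This is exactly why Ohba's conjecture stood open for over a decade. The actual argument of Noel, Reed and Wu --- which this paper adapts to signed graphs --- requires, beyond a Hall-type matching step for representative color sets (Theorem \ref{signHall}, Proposition \ref{repsat}), the machinery of weak and near $L$-colorings, two notions of frequent colors, the proof that a minimal bad pair admits no near $L$-coloring (Proposition \ref{nonear}), an upper bound of $2(k-\xi-1)$ on the number of frequent colors (where $\xi$ is the number of singleton parts) obtained via maximal list assignments (Proposition \ref{kmxi}), and a conflicting lower bound of $2k-2\xi+1$ that yields the final contradiction. None of these ideas, nor any substitute for them, appears in your proposal; ``I expect to need a slightly stronger inductive statement'' acknowledges the gap but does not bridge it.

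A remark on the relation to the paper: the paper does not prove this statement directly either. It cites it, proves the stronger signed-graph result (Theorem \ref{equmain}) by the long argument outlined above, and then deduces the present theorem through the complete-expansion construction (Lemma \ref{basicrelation} and the corollary following it). Your plan, if it were completed, would be a direct graph-theoretic proof, i.e.\ essentially a reproduction of the original Noel--Reed--Wu argument; as written, it stops exactly where that argument begins.
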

The main purpose of this paper is to extend the above theorem to signed graphs. To state our result we need some definitions.

A signed graph $\Sigma$ is a pair $(G,\sigma)$, where $G$ is a loopless graph and $\sigma$ is a mapping from $E(G)$ to $\{+1,-1\}$. An edge $e$ is \emph{positive} (resp. \emph{negative}) if $\sigma(e)=+1$ (resp. $\sigma(e)=-1$).  Throughout this paper, two edges joining the same pair of vertices with opposite signs are allowed but that with the same signs are not allowed.

Let  $\mathbb{Z}^*$ be the set of nonzero integers, i.e.,  $\mathbb{Z}^*=\mathbb{Z}\setminus \{0\}$.  For $C\subseteq Z^*$, a \emph{proper coloring} \cite{Zaslavsky1982} of $\Sigma$ with color set $C$ is a mapping  $f\colon\,V(\Sigma)\mapsto C$ such that for each edge $e$,
\begin{equation}
f(u)\neq \sigma(e)f(v) ~\text{if $u$ and $v$ are joined by $e$}.
\end{equation}
In particular, if $u$ and $v$ are joined by two edges with opposite signs  then $f(u)$ and $f(v)$ have different absolute values.

A {\it zero-free $2k$-coloring} \cite{Zaslavsky1982} of a signed graph $\Sigma$ is a proper coloring of $\Sigma$ with color set $\{\pm 1,\pm 2,\ldots,\pm k\}$.  A signed graph is {\it zero-free $2k$-colorable} if it admits a zero-free $2k$-coloring. The \emph{zero-free chromatic number} of a signed graph $\Sigma$, denoted $\chi^*(\Sigma)$, is the minimum even integer $2k$ for which $\Sigma$ is zero-free $2k$-colorable.

For a signed graph $\Sigma$, a zero-free list assignment is a mapping $L$ which assigns each vertex $v$ a set $L(v)$ of permissible colors in $\mathbb{Z}^*$. For a zero-free list assignment $L$ of $\Sigma$, an {\it $L$-coloring} is a proper coloring $f$ such that $f(v)\in L(v)$ for all $v\in V(\Sigma)$. We say that $\Sigma$ is {\it $L$-colorable} if $\Sigma$ admits an $L$-coloring. The \emph{zero-free list chromatic number} of $\Sigma$, denoted $\chi_l^*(\Sigma)$, is the minimum $2k$ such that $\Sigma$ is $L$-colorable for any zero-free list assignment $L$ with $|L(v)|\ge 2k$ for all $v\in V(\Sigma)$. Clearly, $\chi_l^*(\Sigma)\ge \chi^*(\Sigma)$ for any signed graph $\Sigma$. A signed graph $\Sigma$ is \emph{zero-free chromatic-choosable} if $\chi_l^*(\Sigma)=\chi^*(\Sigma)$. Under these definitions, the main result of this paper  is stated as follows:
\begin{thm}\label{main}
If $|V(\Sigma)|\le \chi^*(\Sigma)+1$, then $\Sigma$ is zero-free chromatic-choosable.
\end{thm}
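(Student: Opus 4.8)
The plan is to run a minimal-counterexample argument in the spirit of the Noel--Reed--Wu proof, adapted to carry the extra \emph{sign} data that a zero-free palette $\{\pm 1,\dots,\pm k\}$ encodes. Suppose the theorem fails and let $\Sigma$ be a counterexample minimizing $|V(\Sigma)|$: there is a zero-free list assignment $L$ with $|L(v)|\ge \chi^*(\Sigma)=2k$ admitting no $L$-coloring, while $|V(\Sigma)|\le 2k+1$. As usual one first performs cleanup reductions --- trim each $|L(v)|$ to exactly $2k$, and add edges to make $\Sigma$ edge-maximal subject to keeping $\chi^*(\Sigma)=2k$ and non-$L$-colorability --- so that $\Sigma$ becomes a canonical ``dense'' signed graph, the signed analogue of a complete multipartite graph. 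The first real task is to pin down this extremal structure: because there are three edge types (single positive, single negative, doubled), the classes are governed both by a proper-coloring condition (doubly-joined pairs need distinct absolute values) and by a balance condition (within a class the induced signed graph must admit a consistent sign assignment).

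The key structural observation I would exploit is that a zero-free coloring factors into two layers. Writing $f(v)=s(v)\cdot a(v)$ with absolute value $a(v)\in\{1,\dots,k\}$ and sign $s(v)\in\{+,-\}$, a positive edge forbids $(a(u),s(u))=(a(v),s(v))$, a negative edge forbids $a(u)=a(v)$ with $s(u)\neq s(v)$, and a doubled edge forbids $a(u)=a(v)$ outright. Thus choosing the absolute values is a genuine $k$-coloring problem (here $k=\chi^*(\Sigma)/2$ plays exactly the role that $\chi$ plays in Theorem~\ref{NRWthm}, which is the source of the factor $2$ separating the two bounds), and once the absolute values are fixed the signs are determined up to the balance of each monochromatic class. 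I would translate $L$ into an ``absolute-value list'' $A(v)=\{\,|c|:c\in L(v)\,\}$, noting $|A(v)|\ge k$, and try to reduce the existence of an $L$-coloring to an Ohba-type statement on this $k$-palette, feeding the result or the proof technique of Theorem~\ref{NRWthm}.

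Concretely, I would cast the search for an $L$-coloring as a defect-form Hall / kernel argument: build an auxiliary bipartite incidence between vertices and available colors, and show that edge-maximality together with $|V(\Sigma)|\le 2k+1$ forces the relevant Hall condition, exactly the counting that drives the Noel--Reed--Wu argument; the contradiction then comes from producing the coloring that $\Sigma$ was assumed not to have. The main obstacle, and the place where the signed case departs from the unsigned one, is that the palette carries the involution $c\mapsto -c$: a color $+c$ and its negative $-c$ are coupled through negative edges, so forbidding one at a vertex constrains the other, and the lists $L(v)$ need not be symmetric under negation. This coupling breaks the clean symmetry that the unsigned counting enjoys, and the heart of the proof will be a careful accounting of how $+c$ and $-c$ occur across the lists so that the two coloring layers --- absolute values and signs --- can be chosen simultaneously. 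Handling the balance/sign layer inside each extremal class, and ensuring it does not obstruct the absolute-value matching, is where I expect the bulk of the technical work to lie.
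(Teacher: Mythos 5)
Your framework (minimal counterexample, a Noel--Reed--Wu-style argument on a dense canonical form) matches the paper's, and you correctly identify the involution $c\mapsto -c$ as the crux; but the one concrete mechanism you propose for handling it does not work, and everything else is deferred. Specifically, passing to absolute-value lists $A(v)=\{|c|\colon c\in L(v)\}$ with $|A(v)|\ge k$ and invoking Theorem~\ref{NRWthm} (or ``an Ohba-type statement on this $k$-palette'') cannot produce an $L$-coloring. In the canonical form ($\Sigma$ the complete expansion of a complete $k$-partite graph $G$, which the paper reaches via switching plus edge addition in Lemma~\ref{addswitch}), two vertices in the same part are joined by a single negative edge, so they may share a color only if $f(u)=f(v)$ as \emph{signed} colors, never $f(u)=-f(v)$. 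A proper coloring $a(v)\in A(v)$ of $G$ supplied by Theorem~\ref{NRWthm} only guarantees that for each $v$ at least one of $\pm a(v)$ lies in $L(v)$: if a color class $\{u,v\}$ inside a part gets $a(u)=a(v)=3$ while $+3\in L(u)\setminus L(v)$ and $-3\in L(v)\setminus L(u)$, the absolute-value coloring does not lift, and a black-box application of NRW gives you no control over which classes it forms. The reduction thus discards exactly the information that makes the signed problem hard --- as you yourself observe when noting that the lists need not be symmetric under negation.

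The paper accordingly feeds nothing into Theorem~\ref{NRWthm}; it re-runs the entire argument with signed replacements for each ingredient: matchings become \emph{good} matchings (matched colors pairwise distinct in absolute value), Hall's theorem becomes a signed version with condition $|\textup{abs}(N_B(S))|\ge |S|$ (Theorem~\ref{signHall}), color sets are handled through \emph{representative subsets}, list-deletion removes $g(A)^{\pm}=g(A)\cup(-g(A))$ rather than $g(A)$ (Lemma~\ref{fourconds}), and the frequent-color/near-coloring counting of Noel--Reed--Wu is redone with $\textup{abs}$-counting throughout, ending in incompatible bounds on the number of frequent colors (at most $2(k-\xi-1)$ by Proposition~\ref{kmxi}, at least $2k-2\xi+1$ for maximal lists). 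That body of work is precisely the ``careful accounting of how $+c$ and $-c$ occur across the lists'' which your proposal names as the heart of the matter and then leaves open; locating the difficulty is not resolving it. A lesser point: edge-maximality alone does not yield a complete expansion --- inside a color class $V_i^{+}\cup V_i^{-}$ the edge-maximal signed graph has positive edges between the two halves and negative edges within them --- so the switching normalization of Lemma~\ref{addswitch}, which your ``balance condition'' gestures at but never performs, is also genuinely needed.
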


For a  graph $G$, the \emph{complete expansion} of $G$ is a signed graph obtained from $G$ by regarding each edge in $G$ as a positive edge and adding a negative edge between each pair of vertices.

\begin{lem}\label{basicrelation}
Let $G$ be a graph and $\Sigma$ be its complete expansion. Then $\chi^*(\Sigma)=2\chi(G)$ and $\chi_l^*(\Sigma)\ge 2\chi_l(G)$.
\end{lem}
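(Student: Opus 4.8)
The plan is to reduce everything to a single structural observation about the complete expansion $\Sigma$ of $G$: two vertices $u,v$ that are adjacent in $G$ are joined in $\Sigma$ by both a positive and a negative edge, so any proper coloring $f$ must satisfy $|f(u)|\neq|f(v)|$; whereas two non-adjacent vertices are joined only by the added negative edge, which merely forbids $f(u)=-f(v)$. Consequently the \emph{magnitude map} $v\mapsto|f(v)|$ sends colorings of $\Sigma$ to colorings of $G$, and this correspondence drives both assertions.

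To prove $\chi^*(\Sigma)=2\chi(G)$ I would establish the two inequalities separately. For $\chi^*(\Sigma)\le 2\chi(G)$, take a proper coloring $c$ of $G$ with colors $\{1,\dots,\chi(G)\}$ and set $f(v)=c(v)$, using only positive values; adjacent vertices then receive distinct magnitudes, and $f(u)=-f(v)$ never occurs because every value is positive, so $f$ is a zero-free $2\chi(G)$-coloring. For the reverse inequality, given any zero-free $2k$-coloring $f$ of $\Sigma$, the map $g(v)=|f(v)|\in\{1,\dots,k\}$ is, by the observation above, a proper $k$-coloring of $G$, whence $\chi(G)\le k$ and $2\chi(G)\le\chi^*(\Sigma)$.

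For the list bound $\chi_l^*(\Sigma)\ge 2\chi_l(G)$ I would argue contrapositively. Since $G$ is not $(\chi_l(G)-1)$-choosable, fix a list assignment $L'$ on $G$ with $|L'(v)|=\chi_l(G)-1$ that uses positive integer colors and admits no proper $L'$-coloring. Symmetrize it to the zero-free assignment $L(v)=L'(v)\cup(-L'(v))$, which has $|L(v)|=2(\chi_l(G)-1)$ since the positive and negative parts are disjoint. If $\Sigma$ had an $L$-coloring $f$, then $g(v)=|f(v)|\in L'(v)$ would again be a proper $L'$-coloring of $G$, a contradiction; hence $\Sigma$ is not $L$-colorable and $\chi_l^*(\Sigma)>2(\chi_l(G)-1)$. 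As $\chi_l^*(\Sigma)$ is even by definition, this forces $\chi_l^*(\Sigma)\ge 2\chi_l(G)$.

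No step is technically deep; the main care lies in the bookkeeping. The two points to watch are: (i) keeping the classes $L'(v)$ and $-L'(v)$ disjoint and zero-free, which is exactly why I insist on positive colors throughout; and (ii) the final parity step that upgrades $\chi_l^*(\Sigma)>2\chi_l(G)-2$ to $\chi_l^*(\Sigma)\ge 2\chi_l(G)$. I would also flag that the list relation is stated only as an inequality and need not be tight: the magnitude map discards the extra constraints imposed by the negative edges between non-adjacent pairs, so $\chi_l^*(\Sigma)$ may genuinely exceed $2\chi_l(G)$.
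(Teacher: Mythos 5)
Your proposal is correct and takes essentially the same approach as the paper: both inequalities for $\chi^*(\Sigma)=2\chi(G)$ via the magnitude map $v\mapsto|f(v)|$, and the list bound via the symmetrized assignment $L(v)=L'(v)\cup(-L'(v))$ on positive lists. The only difference is cosmetic: you argue the list inequality contrapositively (transporting a bad $(\chi_l(G)-1)$-list of $G$ to a bad list of $\Sigma$ and then invoking the evenness of $\chi_l^*(\Sigma)$), whereas the paper argues directly (symmetrizing an arbitrary list of $G$ of size $\chi_l^*(\Sigma)/2$ and pushing an $\tilde{L}$-coloring of $\Sigma$ down to $G$), which avoids your parity step; both versions are sound.
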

\begin{proof}
Let  $f\colon\,V(G)\mapsto \{1,2,\ldots,k\}$ be a $k$-coloring of $G$. Note that $V(G)=V(\Sigma)$.  Consider the mapping $\tilde{f}\colon\,V(\Sigma)\mapsto \{\pm 1,\pm 2,\ldots,\pm k\}$ defined by $\tilde{f}(v)=f(v)$ for each $v\in V(\Sigma)$. One easily verify that $\tilde{f}$ is a zero-free $2k$-coloring of $\Sigma$. By letting $k=\chi(G)$, we find that $\Sigma$ is $2\chi(G)$-colorable and hence $\chi^*(\Sigma)\le 2\chi(G)$.

Similarly, let $g\colon\,V(\Sigma)\mapsto \{\pm 1,\pm 2,\ldots,\pm 2k\}$ be a zero-free $2k$-coloring of $\Sigma$. Then the mapping $\bar{g}\colon\,V(G)\mapsto \{1,2,\ldots,k\}$, defined by $\bar{g}(v)=|g(v)|$ for each $v\in V(G)$, is a $k$-coloring of $G$. Thus we have $\chi(G)\le \chi^*(\Sigma)/2$. This proves that
 $\chi^*(\Sigma)=2\chi(G)$.

 Let $k=\chi_l^*(\Sigma)/2$ and $L$ be any list assignment of $G$ with $|L(v)|\ge k$ for all $v\in V(G)$. To show that $\chi_l^*(\Sigma)\ge 2\chi_l(G)$, i.e., $\chi_l(G)\le k$, it suffices to show that $G$ admits an $L$-coloring.  Without loss of generality, we may assume that $L(v)\subseteq \mathbb{Z}^+$ for any $v\in V(G)$, where $\mathbb{Z}^+$ is the set of positive integers. Let $\tilde{L}$ be the mapping  defined by $\tilde{L}(v)=L(v)\cup (-L(v))$. Clearly, $\tilde{L}$ is a zero-free list assignment and  for each $v\in V(\Sigma)$, $|\tilde{L}(v)|\ge 2k$, i.e., $|\tilde{L}(v)|\ge \chi_l^*(\Sigma)$. Thus, $\Sigma$ is $\tilde{L}$-colorable. Let $h$ be an $\tilde{L}$-coloring of $\Sigma$. Then the mapping $\bar{h}$ defined by $\bar{h}(v)=|h(v)|$ is clearly an $L$-coloring of $G$. This proves the lemma.
\end{proof}
Now we show that Theorem \ref{main} is  indeed a strengthening of Theorem \ref{NRWthm}.
\begin{cor}
Theorem \ref{main} implies Theorem \ref{NRWthm}.
\end{cor}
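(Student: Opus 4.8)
The plan is to deduce Theorem \ref{NRWthm} from Theorem \ref{main} by passing to the complete expansion and invoking Lemma \ref{basicrelation}, which is precisely the dictionary connecting the ordinary and zero-free parameters. So I would start with an arbitrary graph $G$ satisfying the Noel-Reed-Wu hypothesis $|V(G)|\le 2\chi(G)+1$, and let $\Sigma$ be its complete expansion. The goal is to prove $\chi_l(G)=\chi(G)$; since $\chi_l(G)\ge\chi(G)$ holds trivially for every graph, it suffices to establish $\chi_l(G)\le\chi(G)$.

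The first step is to check that the hypothesis of Theorem \ref{main} is satisfied for $\Sigma$. Because the complete expansion does not alter the vertex set, $|V(\Sigma)|=|V(G)|$, and Lemma \ref{basicrelation} gives $\chi^*(\Sigma)=2\chi(G)$. Substituting these two identities into the assumed bound $|V(G)|\le 2\chi(G)+1$ turns it into $|V(\Sigma)|\le \chi^*(\Sigma)+1$, which is exactly the premise required by Theorem \ref{main}.

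With the premise verified, Theorem \ref{main} yields that $\Sigma$ is zero-free chromatic-choosable, i.e.\ $\chi_l^*(\Sigma)=\chi^*(\Sigma)=2\chi(G)$. Now I would feed this into the second assertion of Lemma \ref{basicrelation}, namely $\chi_l^*(\Sigma)\ge 2\chi_l(G)$, to obtain $2\chi(G)=\chi_l^*(\Sigma)\ge 2\chi_l(G)$, whence $\chi_l(G)\le \chi(G)$. Combined with the trivial reverse inequality this gives $\chi_l(G)=\chi(G)$, so $G$ is chromatic-choosable, as desired.

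There is no genuine obstacle in this argument: all the substantive work has already been carried out in Lemma \ref{basicrelation}, and the corollary reduces to a purely formal substitution. The only point demanding a little care is to keep track of the factor of two and the directions of the two inequalities in the lemma, making sure that the equality $\chi_l^*(\Sigma)=\chi^*(\Sigma)$ furnished by Theorem \ref{main} is combined with the lower bound $\chi_l^*(\Sigma)\ge 2\chi_l(G)$ in the correct order, so that $\chi_l(G)$ is pinned down from above rather than below.
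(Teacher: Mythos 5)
Your proposal is correct and follows exactly the paper's own argument: pass to the complete expansion $\Sigma$, use Lemma \ref{basicrelation} to translate the hypothesis into $|V(\Sigma)|\le\chi^*(\Sigma)+1$, apply Theorem \ref{main}, and then use the inequality $\chi_l^*(\Sigma)\ge 2\chi_l(G)$ from the same lemma to pin down $\chi_l(G)\le\chi(G)$. There is no difference in substance between your write-up and the paper's proof.
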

\begin{proof}
Let $G$ be any graph with $|V(G)|\le 2\chi(G)+1$ and $\Sigma$ be its complete expansion. By Lemma \ref{basicrelation}, $\chi^*(\Sigma)=2\chi(G)$ and hence
$|V(\Sigma)|\le \chi^*(\Sigma)+1$. Thus, by Theorem \ref{main}, $\chi_l^*(\Sigma)=\chi^*(\Sigma)$. Using Lemma \ref{basicrelation} again, we have
$\chi_l(G)\le \chi(G)$ and hence $\chi_l(G)= \chi(G)$.
\end{proof}

For a signed graph $\Sigma$ and a vertex $v\in V(\Sigma)$, a {\it switching} at $x$ means changing the sign of each edge incident with $v$.  Generally, a switching at a vertex subset $X \subseteq V (\Sigma)$ means switching at every vertex in $X$ one by one. Equivalently, a switching at
$X$ means changing the sign of every edge with exactly one end in $X$. Two signed graphs $\Sigma$ and $\Sigma'$ with the same underling graph are {\it switching equivalent} if $\Sigma'$ can be obtained from $\Sigma$ by a switching at $X$ for some $X\subseteq V(\Sigma)$. It is not difficult to verify that two switching equivalent signed graphs have the same zero-free chromatic number as well as the same zero-free list chromatic number. 
\begin{lem}\label{addswitch}
For any signed graph $\Sigma$, there are a signed graph $\Sigma'$ switching equivalent to $\Sigma$ and a complete  $\chi^*(\Sigma)/2$-partite graph $G$ such that the complete expansion of $G$ can be obtained from $\Sigma'$ by adding some signed edges.
\end{lem}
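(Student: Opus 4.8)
The plan is to build both $\Sigma'$ and $G$ explicitly from an optimal coloring of $\Sigma$. Write $2k=\chi^*(\Sigma)$ and fix a zero-free $2k$-coloring $f\colon V(\Sigma)\to\{\pm1,\pm2,\ldots,\pm k\}$. The guiding idea is that, after a suitable switching, one may assume every color is positive; the partition of $V(\Sigma)$ by the common absolute value of the colors is then exactly the part-structure of the complete $k$-partite graph we want, and the added negative edges of a complete expansion will automatically dominate all the negative edges of $\Sigma'$.

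First I would record how switching interacts with coloring. If $\Sigma'$ is obtained from $\Sigma$ by switching at a set $X\subseteq V(\Sigma)$, then negating the colors on $X$ turns $f$ into a coloring $f'$ of $\Sigma'$ (set $f'(v)=-f(v)$ for $v\in X$ and $f'(v)=f(v)$ otherwise). A short edge-by-edge check, distinguishing whether an edge has $0$, $1$, or $2$ ends in $X$, shows that the defining inequality $f'(u)\neq\sigma'(e)f'(v)$ for $\Sigma'$ is equivalent to $f(u)\neq\sigma(e)f(v)$ for $\Sigma$, so $f'$ is proper. Taking $X=\{v\colon f(v)<0\}$ then produces a switching $\Sigma'$ of $\Sigma$ together with a proper coloring $f'$ satisfying $f'(v)=|f(v)|\in\{1,\ldots,k\}$ for every $v$.

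Next I would set $V_i=\{v\colon f'(v)=i\}$ for $i=1,\ldots,k$ and let $G$ be the complete $k$-partite graph on the parts $V_1,\ldots,V_k$. The key verification is that the complete expansion of $G$ contains every edge of $\Sigma'$ with the correct sign. For a positive edge $uv$ of $\Sigma'$, properness of $f'$ forces $f'(u)\neq f'(v)$, so $u$ and $v$ lie in different parts and the complete expansion carries a positive $uv$-edge. For a negative edge of $\Sigma'$ there is nothing to check, since the complete expansion places a negative edge between \emph{every} pair of vertices; in particular $\Sigma'$ has no positive edge inside a part, which is consistent. Hence $\Sigma'$ is a signed subgraph of the complete expansion of $G$, i.e.\ that expansion arises from $\Sigma'$ by adding some signed edges.

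The one genuinely delicate point, and the step I expect to be the main obstacle, is to guarantee that $G$ is complete $\chi^*(\Sigma)/2$-partite, i.e.\ that none of the $k$ parts $V_i$ is empty. This is where the minimality of $\chi^*(\Sigma)$ must be invoked. If $f$ realized fewer than $k$ distinct absolute values, say only those in a set $\{a_1,\ldots,a_m\}$ with $m<k$, then relabelling $a_j\mapsto j$ while preserving signs yields an odd injection $\phi$ with $\phi(-x)=-\phi(x)$; composing gives a proper coloring $\phi\circ f$ (positive edges are preserved by injectivity, negative edges by oddness) that is a zero-free $2m$-coloring, contradicting $\chi^*(\Sigma)=2k$. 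I would therefore argue at the outset that an optimal $f$ attains all $k$ absolute values, so every $V_i$ is nonempty and $G$ is complete $k$-partite as required.
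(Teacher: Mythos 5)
Your proof is correct and follows essentially the same route as the paper's: both fix an optimal zero-free $2k$-coloring, switch so that all colors can be taken positive and hence all within-part edges become negative (your switch at the negatively colored vertices is the same operation as the paper's switch at the positively colored ones, the two sets being complementary), take $V_i$ to be the vertices whose color has absolute value $i$, and observe that properness forces every positive edge of $\Sigma'$ to join distinct parts while negative edges need no check. Your additional relabeling argument showing each $V_i$ is nonempty (so that $G$ is genuinely complete $\chi^*(\Sigma)/2$-partite) fills a small point the paper leaves implicit, but it does not change the approach.
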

\begin{proof}
Let $k=\chi^*(\Sigma)/2$ and $f\colon\,V(\Sigma)\mapsto\{\pm 1,\pm 2,\ldots,\pm k\}$ be a zero-free $2k$-coloring of $\Sigma$. For $i\in\{1,2,\ldots,k\}$ define $V_i^+=\{v\in V(\Sigma)\colon\,f(v)=i\}$, $V_i^-=\{v\in V(\Sigma)\colon\,f(v)=-i\}$ and $V_i=V_i^+\cup V_i^-$ for $i\in\{1,2,\ldots,k\}$. Let $u,v\in V_i$ and $e$ be an edge joining $u$ and $v$. It is easy to see that $e$ is positive if and only if $e$ has exactly one end in $V_i^+$.  Let $S=\cup_{i=1}^k V_i^+$ and $\Sigma'$ be obtained from $\Sigma$ by a switching at $S$. Now, for each $i\in \{1,2,\ldots,k\}$, each edge in $\Sigma'$ joining two vertices in $V_i$ is negative. Next by adding as many as possible signed edges between vertices in different classes $V_i$ and $V_j$ for each pair $\{i,j\}\subseteq\{1,2,\ldots,k\}$, we obtain a signed graph $\Sigma''$ in which any two vertices belong to different classes are joined by two edges with opposite signs. Finally, let $G$ be the complete $k$-partite graph with parts $V_1,V_2,\ldots,V_k$. We see that $\Sigma''$ is exactly the complete expansion of $G$. This proves the lemma.
\end{proof}

For a signed graph $\Sigma$, let $\Sigma'$ be the complete expansion of a complete $\chi^*(\Sigma)/2$-partite graph $G$ as defined in Lemma \ref{addswitch}. By Lemma \ref{basicrelation}, we have $\chi^*(\Sigma')=\chi^*(\Sigma)$. As the operation of switching at a vertex and adding an edge does not decrease the zero-free list chromatic number, we have $\chi_l^*(\Sigma')\ge \chi_l^*(\Sigma)$. It follows that if $\Sigma'$ is zero-free chromatic-choosable then so is $\Sigma$. Thus,  to  prove Theorem \ref{main}, it suffices to prove the following theorem.
\begin{thm}\label{equmain}
If $\Sigma$ is a complete expansion of a complete $k$-partite graph $G$ on at most $2k+1$ vertices, then $\chi_l^*(\Sigma)=2k$.
\end{thm}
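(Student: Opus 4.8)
The plan is to induct on the number of parts $k$, using Noel--Reed--Wu (Theorem \ref{NRWthm}) as a black box and splitting the signed problem into an absolute-value coloring problem plus a sign-consistency problem. I may assume $|L(v)|=2k$ for every $v$, since shrinking lists only makes the task harder, and I can dispatch the small base cases (e.g. $k=1$, where $\Sigma$ is an all-negative clique on at most three vertices) directly. For $c\in\mathbb{Z}^*$ write $|c|$ for its absolute value and set $\hat{L}(v)=\{|c|:c\in L(v)\}$; since each absolute value contributes at most two colors to $L(v)$, we have $|\hat{L}(v)|\ge k$. The basic observation is that an $L$-coloring $f$ of $\Sigma$ is the same as a pair $(a,s)$, where $a(v)\in\hat{L}(v)$ is a proper coloring of $G$ by absolute values (vertices in different parts get different absolute values) and $s(v)\in\{+,-\}$ is a sign with $s(v)a(v)\in L(v)$, subject to the within-part requirement that two vertices of a common part sharing an absolute value also share their sign. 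A crucial structural point is that, because absolute values are distinct across parts, each value $t$ is used inside at most one part; hence the sign requirement decouples value by value: for each $t$ the set $W_t$ of vertices colored $t$ lies in a single part and needs only one common admissible sign.

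The engine of the induction is the following reduction. Suppose some part $V_i$ with $|V_i|\ge 2$ has a common signed color, i.e. $\bigcap_{v\in V_i}L(v)\neq\emptyset$, and pick $c$ in this intersection. Color every vertex of $V_i$ with $c$ (legitimate, since within a part the only constraint is $f(u)\neq -f(v)$ and $c\neq -c$), delete $V_i$, and delete both $|c|$ and $-|c|$ from every remaining list. The result is the complete expansion of a complete $(k-1)$-partite graph on $n-|V_i|$ vertices with lists of size at least $2k-2=2(k-1)$. Since $|V_i|\ge 2$ gives $n-|V_i|\le 2k-1=2(k-1)+1$, the induction hypothesis applies and, combined with the coloring of $V_i$, yields the desired $L$-coloring. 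This step simultaneously shrinks the instance and discharges the within-part sign obligation for $V_i$.

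The remaining, and genuinely hard, case is when no part of size at least two has a common signed color; this is exactly the tight regime $n=2k+1$ that underlies the difficulty of Noel--Reed--Wu itself. Here I would first invoke Theorem \ref{NRWthm}: since $G$ is complete $k$-partite on $n\le 2k+1=2\chi(G)+1$ vertices it is chromatic-choosable, so applying it to $\hat{L}$ (sizes $\ge k=\chi(G)$) produces a proper absolute-value coloring $a$. What is \emph{not} automatic is sign-consistency: for a value $t$ with $|W_t|\ge 2$ one needs $+t\in\bigcap_{v\in W_t}L(v)$ or $-t\in\bigcap_{v\in W_t}L(v)$, and an obstruction occurs precisely when $W_t$ contains a vertex whose list meets $\{t,-t\}$ only in $+t$ together with a vertex meeting it only in $-t$.

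Consequently the main obstacle is to choose the absolute-value coloring so that every monochromatic class $W_t$ inside a part admits a common sign, despite the fact that reusing values within parts is unavoidable (an injective choice of absolute values fails Hall's condition as soon as a part is large, so one cannot simply color the underlying complete graph). I expect this to require feeding Noel--Reed--Wu, or a Hall-type/independent-transversal argument, a refined list assignment in which the single-sign values that would clash are pruned in a globally consistent way while each list is kept of size at least $k$; verifying that such pruning is always possible is where the hypothesis $n\le 2k+1$ and the assumed absence of common colors in large parts must be combined, in the spirit of the original proof.
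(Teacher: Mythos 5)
Your decoupling of an $L$-coloring into a proper absolute-value coloring of $G$ plus a per-value sign choice is correct, and your reduction step (coloring a part $V_i$ with $|V_i|\ge 2$ that has a common color $c$, deleting it, and removing $c$ and $-c$ from the remaining lists) is sound; it corresponds to the role played by Corollary \ref{nonsingledisjoint} and Lemma \ref{fourconds} in the paper. But your argument stops exactly where the theorem actually lives. In the case where no part of size at least two has a common color, you invoke Theorem \ref{NRWthm} on the absolute-value lists $\hat{L}$ and then concede that the coloring it returns need not be sign-consistent; your proposed repair --- prune the ``wrong-sign'' values from the lists $\hat{L}(v)$ in a globally consistent way while keeping every list of size at least $k$ --- is stated as an expectation, not proved. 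This is a genuine gap, not a routine verification. Concretely, if $L(v)$ contains $s$ values with only one sign and $d$ values with both signs, then $2d+s=2k$ and $|\hat{L}(v)|=k+s/2$; an uncoordinated choice of one sign per (part, value) pair can delete all $s$ single-sign values of $v$, leaving only $d=k-s/2<k$ values, so Noel--Reed--Wu no longer applies. Hence the pruning must be coordinated across each part and all values simultaneously, and nothing in your sketch shows such a coordination exists; this existence question looks comparable in difficulty to the theorem itself. A second, smaller flaw: your claim that the remaining hard case is ``exactly the tight regime $n=2k+1$'' is unjustified in your setup, since an instance with $n\le 2k$, every part of size at least two, and no common colors (e.g. $k=2$ with two parts of size two having pairwise disjoint lists) escapes both of your reductions; in the paper the equality $|V(\Sigma)|=2k+1$ is a nontrivial conclusion about a minimal counterexample (Corollary \ref{exactorder}), derived from machinery you have not built.

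For comparison, the paper does not use Noel--Reed--Wu as a black box at all: it reruns the entire Noel--Reed--Wu strategy inside the signed setting, working with a vertex-minimal counterexample, a Hall-type theorem for ``good matchings'' (Theorem \ref{signHall}), weak and near $L$-colorings (Proposition \ref{nonear}), and then colliding an upper bound (Lemma \ref{basicupper2k}, Proposition \ref{kmxi}) against a lower bound (the final section) on the number of frequent colors. The sign-consistency obstruction you correctly isolated is precisely the reason the black-box route is not known to work; to complete your proof you would need to either prove your pruning lemma or follow the paper's longer path.
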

\section{Proof of Theorem \ref{equmain}}
We follow the method in \cite{Noel2015}. Suppose to the contrary that Theorem \ref{equmain} is false.  Let $\Sigma$ be a counterexample to Theorem \ref{equmain}, that is, $\Sigma$ is a complete expansion of a complete $k$-partite graph $G$ on at most $2k+1$ vertices such that $\chi_l^*(\Sigma)>2k$. We suppose further that $|V(\Sigma)|$ is minimal among all counterexamples.

Let $L$ be any zero-free list assignment of $\Sigma$ such that $|L(v)|\ge 2k$ for all $v\in V(\Sigma)$ and $\Sigma$ is not $L$-colorable. We use $\mathcal{L}$ to denote the set of all such list assignments.

\subsection{Properties of $\Sigma$ and $L\in \mathcal{L}$}

 Let $C_L=\cup_{v\in V(\Sigma)}L(v)$ and, for a set $S\subseteq \mathbb{Z}^*$, let $S^{\pm}=S\cup (-S)$.
\begin{lem} \label{fourconds} If there is a nonnegative integer $\ell$ and a nonempty proper subset $A$ of $V(\Sigma)$  such that

\textup{(a).}  $\Sigma[A]$ admits an $L|_A$-coloring $g$,

\textup{(b).}  $|V(\Sigma-A)|\le 2(k-\ell)+1$,

\textup{(c).}  $\chi^*(\Sigma-A)\le 2(k-\ell)$, and

\textup{(d).}  $|L(v)\setminus g(A)^{\pm }|\ge 2(k-\ell)$ for all $v\in V(\Sigma-A),$\\
then $\Sigma$ is $L$-colorable.
\end{lem}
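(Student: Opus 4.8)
The plan is to reduce the colorability of the whole signed graph $\Sigma$ to the colorability of the smaller signed graph $\Sigma-A$, exploiting the minimality of the counterexample $\Sigma$. The governing idea is that once we have an $L|_A$-coloring $g$ of $\Sigma[A]$, any edge between $A$ and $V(\Sigma-A)$ forbids, for each vertex $v$ outside $A$, exactly those colors that would clash with the already-fixed values $g(u)$ on neighbors $u\in A$. Since two vertices in different parts of the complete expansion are joined by both a positive and a negative edge, a fixed color $g(u)=c$ forbids $v$ from using both $c$ and $-c$; hence the total set of colors ruled out at $v$ by the coloring of $A$ is contained in $g(A)^{\pm}$. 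This is exactly why hypothesis (d) is phrased in terms of $g(A)^{\pm}$.

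First I would define, for each $v\in V(\Sigma-A)$, the restricted list $L'(v)=L(v)\setminus g(A)^{\pm}$. By hypothesis (d) we have $|L'(v)|\ge 2(k-\ell)$ for every such $v$, so $L'$ is a zero-free list assignment for $\Sigma-A$ whose lists all have size at least $2(k-\ell)$. Next I would verify that $\Sigma-A$ satisfies the structural hypotheses needed to invoke minimality: by (c) its zero-free chromatic number is at most $2(k-\ell)$, and by (b) it has at most $2(k-\ell)+1$ vertices, so it is a signed graph of the required form with parameter $k-\ell$ in place of $k$. Here I would need the (routine) observation that an induced subgraph of a complete expansion of a complete $k$-partite graph is again the complete expansion of a complete $k'$-partite graph for some $k'\le k$, so that $\Sigma-A$ lies in the same class of graphs to which Theorem \ref{equmain} applies. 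Because $|V(\Sigma-A)|<|V(\Sigma)|$, the minimality of $\Sigma$ forces $\chi_l^*(\Sigma-A)\le 2(k-\ell)$, and therefore $\Sigma-A$ admits an $L'$-coloring $h$.

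Finally I would stitch $g$ and $h$ together into a single coloring $f$ of $\Sigma$ by setting $f(v)=g(v)$ for $v\in A$ and $f(v)=h(v)$ for $v\in V(\Sigma-A)$, and check that $f$ is a proper $L$-coloring. Edges inside $A$ are handled by $g$, edges inside $V(\Sigma-A)$ by $h$, so the only thing to verify is that no edge crossing the cut $A,\,V(\Sigma-A)$ is violated. For such an edge incident to $v\notin A$, the value $f(v)=h(v)$ was chosen from $L'(v)$, hence avoids $g(A)^{\pm}$, so $f(v)\ne \pm g(u)$ for every $u\in A$; this rules out a monochromatic positive edge ($f(v)=g(u)$) as well as a sign-violated negative edge ($f(v)=-g(u)$), which together cover both edges of any doubled pair. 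Thus $f$ is an $L$-coloring of $\Sigma$, contradicting $\Sigma\in\mathcal{L}$ for the chosen $L$.

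The main obstacle I anticipate is the bookkeeping in the middle step: one must confirm that deleting the vertex set $A$ genuinely lands $\Sigma-A$ inside the hypothesis class of the minimal counterexample, so that minimality can legitimately be applied. In particular the vertex-count bound (b) and the chromatic bound (c) are offered as hypotheses precisely so that $\Sigma-A$ fits the template $|V|\le 2k'+1$ with $\chi^*\le 2k'$; the delicate point is that the parameter must drop from $k$ to $k-\ell$ in a compatible way across all three of (b), (c), and (d), since a mismatch in the shift $\ell$ would break either the application of minimality or the size estimate on $L'(v)$. Once these alignments are checked, the gluing argument itself is straightforward.
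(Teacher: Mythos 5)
Your overall architecture (restrict the lists to $L'(v)=L(v)\setminus g(A)^{\pm}$, invoke minimality of the counterexample to get an $L'$-coloring $h$ of $\Sigma-A$, then glue $g$ and $h$ using the fact that $h$ avoids every absolute value occurring in $g(A)$) is the same as the paper's, and your gluing step is correct. The gap is in the middle step, precisely at the point you dismissed as a ``routine observation''. The minimality of $\Sigma$ only applies to signed graphs that are complete expansions of complete $m$-partite graphs on at most $2m+1$ vertices \emph{for one and the same} $m$. Now $\Sigma-A$ is indeed a complete expansion of a complete $k'$-partite graph, where $k'$ is the number of parts of $G$ that survive the deletion of $A$; but hypotheses (b) and (c) only give $k'\le k-\ell$ and $|V(\Sigma-A)|\le 2(k-\ell)+1$. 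When the inequality in (c) is strict, i.e.\ $k'<k-\ell$, the graph $\Sigma-A$ is \emph{not} ``of the required form with parameter $k-\ell$'' (that would force $\chi^*(\Sigma-A)=2(k-\ell)$ exactly), and it need not be of the required form with parameter $k'$ either, because $|V(\Sigma-A)|\le 2k'+1$ can fail. Concretely, take $k=3$, $\ell=0$, parts of sizes $5,1,1$, and let $A$ consist of the two singletons: then $\Sigma-A$ is the complete expansion of a single part on $5$ vertices, so $k'=1$ and $|V(\Sigma-A)|=5>2k'+1=3$, and minimality says nothing about this graph. Hence your sentence ``the minimality of $\Sigma$ forces $\chi_l^*(\Sigma-A)\le 2(k-\ell)$'' is unjustified as written.

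The paper closes this gap with two moves that are absent from your proposal. If $|V(\Sigma-A)|\ge k-\ell$, one first \emph{adds edges} to $\Sigma-A$ (splitting parts) so as to obtain a signed graph $\Sigma''$ that is the complete expansion of a complete $(k-\ell)$-partite graph, whence $\chi^*(\Sigma'')=2(k-\ell)$ exactly; by (b) we get $|V(\Sigma'')|\le \chi^*(\Sigma'')+1$, so minimality (applicable since $A\neq\emptyset$ gives $|V(\Sigma'')|<|V(\Sigma)|$) yields $\chi^*_l(\Sigma'')=2(k-\ell)$, and since adding edges cannot decrease the zero-free list chromatic number, $\chi^*_l(\Sigma-A)\le \chi^*_l(\Sigma'')=2(k-\ell)$. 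If instead $|V(\Sigma-A)|<k-\ell$, the part-splitting is impossible (too few vertices), and one uses the trivial bound $\chi^*_l(\Gamma)\le 2|V(\Gamma)|$, valid for every signed graph $\Gamma$, to conclude $\chi^*_l(\Sigma-A)<2(k-\ell)$ directly. With these two cases supplied, the application of Theorem~\ref{equmain}-type minimality becomes legitimate and the rest of your argument goes through verbatim.
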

\begin{proof}
Let $\Sigma'=\Sigma-A$ and $L'(v)=L(v)\setminus g(A)^{\pm }$ for $v\in V(\Sigma')$. We claim that $\chi^*_l(\Sigma')\le 2(k-\ell)$. Note that for any signed graph $\Gamma$, we have $\chi^*_l(\Gamma)\le 2|V(\Gamma)|$. Thus, the claim clearly follows when $|V(\Sigma')|<(k-\ell)$. Now, assume  that $|V(\Sigma')|\ge(k-\ell)$. By (c), we have $\chi^*(\Sigma')\le 2(k-\ell)$. If the inequality is strict then we may obtain a new signed graph $\Sigma''$ from $\Sigma'$ by adding some edges such that $\chi^*(\Sigma'')=2(k-\ell)$ and $\Sigma''$ is the complete expansion of a complete $(k-\ell)$-partite graph. Otherwise let $\Sigma''=\Sigma'$. Either case implies that
\begin{equation}\label{inequequ}
\chi^*(\Sigma'')=2(k-\ell).
\end{equation} By (b) and (\ref{inequequ}), we have $|V(\Sigma'')|\le \chi^*(\Sigma'')+1$. It follows from the minimality of $\Sigma$ that $\chi^*_l(\Sigma'')=2(k-\ell)$. From the construction of $\Sigma''$, we have $\chi^*_l(\Sigma')\le \chi^*_l(\Sigma'')$ and hence $\chi^*_l(\Sigma')\le 2(k-\ell)$. This proves the above claim.

Finally, as $\chi^*_l(\Sigma')\le 2(k-\ell)$,  $\Sigma'$ has an $L'$-coloring $h$ by (d). Note that for any $v\in A$ and $v'\in V(\Sigma')$ we have $|g(v)|\neq |h(v')|$. Combining $g$ with $h$ we obtain an $L$-coloring of $\Sigma$.
\end{proof}
A set $P\subseteq V(\Sigma)$ is called a \emph{part} of $\Sigma$ if $P$ is a partition part of the complete $k$-partite graph $G$. A vertex $v$ is a {\it singleton}  if  $\{v\}$ is a part of $\Sigma$. Throughout the following, we use $\xi$ to denote the number of singletons in $\Sigma$.
\begin{cor}\label{nonsingledisjoint}
If $P$ is a nonsingleton part of $\Sigma$ then $\cap_{v\in P}L(v)=\emptyset$.
\end{cor}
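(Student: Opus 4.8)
The plan is to argue by contradiction and invoke Lemma~\ref{fourconds} with $A=P$ and $\ell=1$. Suppose some nonsingleton part $P$ has a common color, say $c\in\bigcap_{v\in P}L(v)$. The key structural observation is that, in the complete expansion $\Sigma$, two vertices lying in the same part of $G$ are non-adjacent in $G$ and hence joined only by the added negative edge, so the sole constraint between them is $f(u)\neq -f(v)$. Consequently the constant assignment $g(v)=c$ for all $v\in P$ is a proper $L|_P$-coloring of $\Sigma[P]$, since $c\neq -c$ for the nonzero color $c$. This establishes hypothesis (a), with $g(P)^{\pm}=\{c,-c\}$.

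Next I would verify the remaining three hypotheses for this choice. Deleting $P$ leaves precisely the complete expansion of the complete $(k-1)$-partite graph on the other $k-1$ parts, so by Lemma~\ref{basicrelation} we have $\chi^*(\Sigma-P)\le 2(k-1)=2(k-\ell)$, giving (c). Because $P$ is nonsingleton we have $|P|\ge 2$, whence $|V(\Sigma-P)|\le (2k+1)-2=2(k-1)+1$, giving (b). Finally $|g(P)^{\pm}|\le 2$, so for every $v\in V(\Sigma-P)$ we get $|L(v)\setminus g(P)^{\pm}|\ge 2k-2=2(k-\ell)$, which is (d). With (a)--(d) in hand, Lemma~\ref{fourconds} yields that $\Sigma$ is $L$-colorable, contradicting $L\in\mathcal{L}$; therefore $\bigcap_{v\in P}L(v)=\emptyset$.

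I do not expect a genuine obstacle here: the corollary is essentially a direct specialization of Lemma~\ref{fourconds}, and the only point requiring care is the bookkeeping of list sizes together with the observation that a single shared color is ``cheap,'' removing merely the two colors $\{c,-c\}$ from each surviving list while reducing the required size from $2k$ to $2(k-1)$. The one degenerate case to dispatch is when $P$ is not a proper subset of $V(\Sigma)$, which can occur only if $k=1$ (so that $\Sigma$ consists of pairwise negative edges): there Lemma~\ref{fourconds} does not apply, but coloring every vertex by $c$ is already a proper $L$-coloring of $\Sigma$, again contradicting $L\in\mathcal{L}$.
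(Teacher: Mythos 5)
Your proof is correct and follows exactly the paper's approach: contradiction via Lemma~\ref{fourconds} with $A=P$, $\ell=1$, and the constant coloring $g\equiv c$ on $P$ (the paper states this in one line and leaves the verification of (a)--(d) to the reader, which you carry out correctly). Your additional handling of the degenerate case $P=V(\Sigma)$ is a small point of extra care that the paper's proof silently skips.
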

\begin{proof}
Suppose to the contrary that $\cap_{v\in P}L(v)\neq\emptyset$. Let $c\in \cap_{v\in P}L(v)$ and $g(v)=c$ for all $v\in P$. By letting $A=P$ and $\ell=1$ one easily check that all conditions in Lemma \ref{fourconds} are satisfied, which implies that $\Sigma$ is $L$-colorable. This is a contradiction.
\end{proof}
\begin{defi}\textup{
Let $B$ be a bipartite graph with bipartition $(X,Y)$ where $Y\subseteq \mathbb{Z}^*$. A matching $M=\{x_1y_1,x_2y_2,\ldots,x_sy_s\}$ of $B$ is \emph{good} if $y_1,y_2,\ldots,y_s$ have different absolute values, where $x_i\in X$, $y_i\in Y$.}

\end{defi}
For a set $Y\subseteq \mathbb{Z}^*$, we use $\textup{abs}(Y)$ to denote the set consisting of the absolute values of all integers in $Y$, i.e., $\textup{abs}(Y)=\{|y|\colon\,y\in Y\}$. A subset $Y_1$ of $Y$ is called a \emph {representative subset} of $Y$ if
$\textup{abs}(Y_1)=\textup{abs}(Y)$ and any two distinct integers in $Y_1$ have distinct absolute values.

\begin{thm}\label{signHall}
Let $B$ be a bipartite graph with bipartition $(X,Y)$ where $Y\subseteq \mathbb{Z}^*$. Then there is a good matching that saturates $X$ if and only if $|\textup{abs}(N_B(S))|\ge |S|$ for every $S\subseteq X$.
\end{thm}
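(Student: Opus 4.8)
The plan is to reduce this signed Hall condition to the classical Hall marriage theorem by collapsing each antipodal pair of integers to a single vertex. First I would construct an auxiliary bipartite graph $B'$ with bipartition $(X,\textup{abs}(Y))$ in which $x\in X$ is adjacent to $m\in\textup{abs}(Y)$ precisely when $x$ is adjacent in $B$ to some $y\in Y$ with $|y|=m$. The key observation is the neighborhood identity $N_{B'}(S)=\textup{abs}(N_B(S))$ for every $S\subseteq X$, so that the stated hypothesis $|\textup{abs}(N_B(S))|\ge|S|$ is exactly the classical Hall condition $|N_{B'}(S)|\ge|S|$ for $B'$.

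Next I would establish that good matchings of $B$ saturating $X$ correspond to matchings of $B'$ saturating $X$. In one direction, given a good matching $M=\{x_1y_1,\ldots,x_sy_s\}$ of $B$ saturating $X$ (so $s=|X|$), the edges $x_i|y_i|$ form a matching of $B'$ saturating $X$, since goodness guarantees the values $|y_1|,\ldots,|y_s|$ are pairwise distinct. Conversely, given a matching $\{x_1m_1,\ldots,x_sm_s\}$ of $B'$ saturating $X$, for each $i$ I would select some $y_i\in N_B(\{x_i\})$ with $|y_i|=m_i$; the resulting set $\{x_iy_i\}$ is then a matching of $B$ whose $Y$-endpoints have the distinct absolute values $m_1,\ldots,m_s$, hence a good matching saturating $X$.

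With this correspondence in hand the theorem follows at once: a good matching saturating $X$ exists in $B$ if and only if a matching saturating $X$ exists in $B'$, which by Hall's theorem holds if and only if $|N_{B'}(S)|\ge|S|$ for all $S\subseteq X$, i.e.\ if and only if $|\textup{abs}(N_B(S))|\ge|S|$ for all $S\subseteq X$. (Necessity can also be seen directly in one line, as the matched $Y$-endpoints over any $S$ supply $|S|$ distinct absolute values all lying in $\textup{abs}(N_B(S))$; the substantive content is sufficiency, which is cleanest through $B'$.)

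The only genuine obstacle is verifying that the reduction is faithful, specifically the lifting step in which a matching of $B'$ is turned back into a matching of $B$ that is simultaneously a genuine matching (distinct $X$- and $Y$-endpoints) and good. Here the distinctness of the chosen $y_i$ is forced automatically by the distinctness of the labels $m_i=|y_i|$, so no two chosen endpoints can collide; once this is checked, the statement is simply Hall's theorem applied to $B'$.
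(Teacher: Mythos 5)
Your proof is correct and follows essentially the same route as the paper: collapsing each pair of opposite values in $Y$ to form the auxiliary graph on $(X,\textup{abs}(Y))$, verifying the correspondence between good matchings of $B$ and matchings of the auxiliary graph, and invoking the classical Hall's theorem. The only difference is that you spell out the matching correspondence (including the lifting step) that the paper dismisses as easy to verify.
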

\begin{proof}
Let $\overline{B}$ be the bipartite graph obtained from $B$ by identifying each pair of opposite values in $Y$ and removing the resulting multiple edges.  In other words, $\overline{B}$ has a bipartition $(X,\textup{abs}(Y))$ and each vertex $x\in X$ is joined to all vertices in $\textup{abs}(N(x))$. It is easy to verifies that $B$ has a good matching saturating $X$ if and only if $\overline{B}$ has a matching saturating $X$. Moreover, $|\textup{abs}(N_B(S))|=|N_{\overline{B}}(S)|$ for every $S\subseteq X$. Thus the theorem follows from classical Hall's theorem.
\end{proof}
Let $B_L$ be the bipartite graph with bipartition $(V(\Sigma),C_L)$ where each vertex $v\in V(\Sigma)$ is joined to the colors of $L(v)$.
\begin{prop}\label{repsat}
There are a representative subset $C$ of $C_L$ and a matching in $B_L$ that saturates $C$.
\end{prop}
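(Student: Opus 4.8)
The plan is to reduce the statement to an ordinary bipartite matching problem and then verify Hall's condition by exploiting the non-$L$-colorability of $\Sigma$. Following the identification technique used in the proof of Theorem \ref{signHall}, let $H$ be the bipartite graph with bipartition $(V(\Sigma),\textup{abs}(C_L))$ in which a vertex $v$ is joined to an absolute value $a$ precisely when $a\in\textup{abs}(L(v))$ (equivalently $L(v)\cap\{a,-a\}\neq\emptyset$). A representative subset $C$ of $C_L$ together with a matching of $B_L$ saturating $C$ is exactly the same data as a matching of $H$ saturating $\textup{abs}(C_L)$: given such a matching, say matching $a$ to $v$, one picks a color $c_a\in\{a,-a\}\cap L(v)$ and sets $C=\{c_a\}$. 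So it suffices to produce a matching of $H$ saturating $\textup{abs}(C_L)$, which by Hall's theorem amounts to checking $|N_H(S)|\ge|S|$ for every $S\subseteq\textup{abs}(C_L)$.

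To verify this I would argue by contradiction. Suppose $H$ has no matching saturating $\textup{abs}(C_L)$, and let $M$ be a maximum matching and $R\cup Q$ a minimum vertex cover with $R\subseteq V(\Sigma)$ and $Q\subseteq\textup{abs}(C_L)$, so that $|R|+|Q|=|M|<|\textup{abs}(C_L)|$. Put $S=\textup{abs}(C_L)\setminus Q$; then $|S|>|R|$, and since every edge leaving $S$ must be covered by $R$ we get $N_H(S)\subseteq R$, so $S$ is deficient. The point of working with a minimum cover is that $M$ saturates $R$ and matches each vertex of $R$ to an absolute value outside $Q$, i.e. into $S$; restricting $M$ to $N_H(S)\subseteq R$ therefore assigns pairwise distinct absolute values $a_v\in S\cap\textup{abs}(L(v))$ to the vertices $v\in A:=N_H(S)$. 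Choosing $c_v\in\{a_v,-a_v\}\cap L(v)$ defines a proper $L|_A$-coloring $g$ of $\Sigma[A]$ all of whose colors lie in $S^{\pm}$ and have distinct absolute values.

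Now I would apply Lemma \ref{fourconds} with this $A$ and $\ell=0$. Conditions (b) and (c) hold automatically, since $|V(\Sigma-A)|\le|V(\Sigma)|\le 2k+1$ and $\chi^*(\Sigma-A)\le\chi^*(\Sigma)=2k$. For (d), every $v\notin N_H(S)$ satisfies $\textup{abs}(L(v))\cap S=\emptyset$, whence $L(v)\cap g(A)^{\pm}\subseteq L(v)\cap S^{\pm}=\emptyset$ and so $|L(v)\setminus g(A)^{\pm}|=|L(v)|\ge 2k$. Thus Lemma \ref{fourconds} yields an $L$-coloring of $\Sigma$ (in the degenerate case $A=V(\Sigma)$, the coloring $g$ is already a full $L$-coloring), contradicting $L\in\mathcal{L}$. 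Hence the Hall condition holds and the desired $C$ and matching exist.

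The main obstacle is the step that extracts a usable deficient set: an arbitrary $S$ with $|N_H(S)|<|S|$ need not have its neighborhood $N_H(S)$ colorable using only colors of absolute value in $S$, which is exactly what Lemma \ref{fourconds} requires. Routing the choice of $S$ through a minimum vertex cover (equivalently, through a maximum matching) is precisely what guarantees that $N_H(S)$ is saturated into $S$ by $M$, so that the partial coloring $g$ genuinely exists. Once that is arranged, the verification of the hypotheses of Lemma \ref{fourconds} is immediate, because the colors of $g$ are confined to $S^{\pm}$ while the vertices outside $N_H(S)$ avoid $S^{\pm}$ entirely.
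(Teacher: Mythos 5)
Your proof is correct, and it follows the paper's overall strategy---fold $B_L$ into the bipartite graph on $(V(\Sigma),\textup{abs}(C_L))$, suppose no saturating matching exists, produce a deficient set of absolute values whose neighborhood can be properly colored using only those absolute values, and contradict non-$L$-colorability via Lemma \ref{fourconds} with $\ell=0$---but the mechanism at the key extraction step is genuinely different. The paper takes a \emph{minimal} deficient set $T\subseteq\textup{abs}(C_L)$, puts $S=T\setminus\{c\}$, applies Hall's theorem a second time (minimality guarantees Hall's condition inside $S$) to obtain a matching saturating $S$, and then forces equality through the chain $|A|\ge|S|=|T|-1\ge|N_{\overline{B}_L}(T)|\ge|A|$, so that $A=N_{\overline{B}_L}(S)$ ends up perfectly matched into $S$. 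You instead invoke K\"onig duality: with a maximum matching $M$ and a minimum cover $R\cup Q$, the set $S=\textup{abs}(C_L)\setminus Q$ is deficient, $N_H(S)\subseteq R$, and the standard fact that $M$ saturates every vertex of a minimum cover and matches each such vertex to a non-cover vertex hands you directly an injection of $N_H(S)$ into $S$ along edges of $H$. From there the two arguments coincide: choose the signed representative $c_v\in\{a_v,-a_v\}\cap L(v)$, note that every vertex outside $N(S)$ has its list disjoint from $S^{\pm}$, and apply Lemma \ref{fourconds}. Your route buys freedom from the minimality and equality-chain bookkeeping, makes the needed structure (``the deficient set's neighborhood is matched into it'') explicit as the whole point of the construction, and handles the degenerate case $A=V(\Sigma)$, which the paper leaves implicit; the paper's route stays entirely within Hall's theorem, which it has already recast in signed form as Theorem \ref{signHall}. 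The only point you leave unstated is that $A\neq\emptyset$, which Lemma \ref{fourconds} requires; this is immediate because $|S|>|R|\ge 0$ and every element of $\textup{abs}(C_L)$ has at least one neighbor in $H$ (every color of $C_L$ lies in some list), but it deserves a line, just as the paper notes the analogous fact to conclude $|T|\ge 2$.
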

\begin{proof}
Let $\overline{B}_L$ be the bipartite graph with bipartition $(V(\Sigma),\textup{abs}(C_L))$ obtained from $B_L$  as in the proof of Theorem \ref{signHall}.  Clearly, it suffices to show that $\overline{B}_L$  has a matching that saturates $\textup{abs}(C_L)$.

Suppose to the contrary that $\overline{B}_L$  has no matching that saturates $\textup{abs}(C_L)$. Then, by Hall's Theorem, there is a set $T\subseteq \textup{abs}(C_L)$ such that $|N_{ \overline{B}_L}(T)|<|T|$. We assume further that $T$ is minimal with respect to this property. Note that in $\overline{B}_L$ each vertex in $\textup{abs}(C_L)$ has at least one neighbor in $V(\Sigma)$. Thus, $|T|\ge 2$. Choose $c\in T$ and denote $S=T\setminus\{c\}$ and $A=N_{\overline{B}_L}(S)$. As $T$ is minimal, we have $|N_{\overline{B}_L}(S')|\ge |S'|$ for any subset $S'$ of $S$. Thus, by Hall's Theorem, there is a matching $M$ that saturates $S$.  Consequently, we have
\begin{equation}\label{equinequ}
|A|\ge |S|=|T|-1\ge |N_{ \overline{B}_L}(T)|\ge |A|
\end{equation}
and hence all equalities must hold simultaneously in (\ref{equinequ}). As $|T|\ge 2$, this proves that $|A|=|S|\ge 1$.

 For each $v\in A$, let $f(v)$ be the color matched to $v$ by $M$.  Then, by the definition of $\overline{B}_L$, we see that $f(v)\in \textup{abs}(L(v))$. Thus,  $f(v)$ or $-f(v)$ appears in $L(v)$. Let $g\colon\,A\mapsto \mathbb{Z}^*$ defined by $g(v)=f(v)$ if $f(v)\in L(v)$ and $g(v)=-f(v)$ otherwise. Then, we see that $g(v)\in L(v)$ and $|g(v)|\neq |g(u)|$ for any $u\in A$ with $u\not=v$. Thus, $g$ is an $L_A$-coloring. Since $A=N_{\overline{B}_L}(S)$, every vertex $v\in V(\Sigma)\setminus A$ must have $\textup{abs}(L(v))\cap S=\emptyset$. As $\textup{abs}(g(A))=S$, this implies that $\textup{abs}(L(v))\cap \textup{abs}(g(A))=\emptyset$, i.e. $(L(v))\cap (g(A)^{\pm})=\emptyset$. Let $\ell=0$.  Then $\ell$, $A$ and $g$ satisfy the conditions of Lemma \ref{fourconds}. Thus, $\Sigma$ is $L$-colorable, a contradiction.
\end{proof}
\begin{cor}\label{resrepsat}
There are a representative subset $C$ of $C_L$ and an injective mapping $h\colon\,C\mapsto V(\Sigma)$ such that $c\in L(h(c))$ for all $c\in C$.
\end{cor}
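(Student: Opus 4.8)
The plan is to read the desired injective mapping $h$ directly off the matching provided by Proposition \ref{repsat}, so essentially no new work is needed. First I would invoke Proposition \ref{repsat} to obtain a representative subset $C$ of $C_L$ together with a matching $M$ in $B_L$ that saturates $C$. Since $M$ saturates $C$, every color $c\in C$ is the endpoint of a (unique) edge of $M$; I would define $h(c)$ to be the vertex in $V(\Sigma)$ to which $c$ is matched by $M$.

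Two things then have to be checked, both immediate from the structure of $B_L$ and of a matching. For the list-membership condition, recall that in $B_L$ a color $c$ and a vertex $v$ are adjacent precisely when $c\in L(v)$; since $c\,h(c)$ is an edge of $M\subseteq E(B_L)$, we get $c\in L(h(c))$, which is exactly condition $c\in L(h(c))$ for every $c\in C$. For injectivity, note that $M$ is a matching, so distinct colors $c\neq c'$ in $C$ are matched to distinct vertices, whence $h(c)\neq h(c')$; thus $h\colon C\mapsto V(\Sigma)$ is injective. The representative property of $C$ is inherited unchanged from Proposition \ref{repsat}.

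There is no genuine obstacle here: the corollary is merely a reformulation of the matching from Proposition \ref{repsat} as a function from colors to vertices, trading the language of matchings for that of injections. The only point worth stating explicitly is that the orientation is the reverse of the one used in Corollary \ref{nonsingledisjoint}-style arguments, where vertices were colored; here each color selects a distinct vertex whose list contains it, and this is precisely what a matching saturating $C$ delivers.
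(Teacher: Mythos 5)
Your proposal is correct and matches the paper exactly: the paper's proof of Corollary \ref{resrepsat} is the one-line remark that it is ``simply a restatement of Proposition \ref{repsat},'' and your argument just spells out the routine details of that restatement (reading $h$ off the matching, with injectivity from the matching property and $c\in L(h(c))$ from the definition of the edges of $B_L$). Nothing further is needed.
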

\begin{proof}
This is simply a restatement of Proposition \ref{repsat}.
\end{proof}
\begin{cor}\label{CLSigma}
$|\textup{abs}(C_L)|<|V(\Sigma)|\le 2k+1$.
\end{cor}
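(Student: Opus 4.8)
The plan is to establish the two inequalities separately, the second being immediate and the first requiring a short argument. For the right-hand inequality $|V(\Sigma)|\le 2k+1$, I would simply invoke the standing assumption on the counterexample: $\Sigma$ is the complete expansion of a complete $k$-partite graph on at most $2k+1$ vertices, and the complete expansion does not change the vertex set, so $|V(\Sigma)|\le 2k+1$ holds by hypothesis with nothing further to prove.

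For the left-hand inequality, I would start from Corollary \ref{resrepsat}, which supplies a representative subset $C$ of $C_L$ together with an injective map $h\colon C\mapsto V(\Sigma)$ satisfying $c\in L(h(c))$ for all $c\in C$. Since $C$ is a representative subset, its elements have pairwise distinct absolute values, so $|C|=|\textup{abs}(C)|=|\textup{abs}(C_L)|$. The injectivity of $h$ then already yields the weak bound $|\textup{abs}(C_L)|=|C|\le |V(\Sigma)|$, and the whole task reduces to ruling out equality.

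I would argue by contradiction. Assume $|\textup{abs}(C_L)|=|V(\Sigma)|$. Then $h$ is an injection between finite sets of the same cardinality, hence a bijection, and I can define $f\colon V(\Sigma)\mapsto \mathbb{Z}^*$ by $f(v)=h^{-1}(v)$. By the defining property of $h$ we have $f(v)\in L(v)$ for every $v$. The crucial observation is that $f$ takes its values in the representative set $C$, so distinct vertices necessarily receive colors of distinct absolute value: $|f(u)|\neq |f(v)|$ whenever $u\neq v$. This single property certifies that $f$ is a proper coloring of the complete expansion $\Sigma$: two vertices in different parts are joined by a pair of opposite-sign edges and thus only require $|f(u)|\neq|f(v)|$, while two vertices in the same part are joined by a single negative edge and thus only require $f(u)\neq -f(v)$, which is forced by $|f(u)|\neq|f(v)|$. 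Hence $f$ is an $L$-coloring of $\Sigma$, contradicting the fact that $\Sigma$ is not $L$-colorable. Therefore the inequality must be strict.

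I do not expect a genuine obstacle in this corollary: its entire content is the observation that a complete representative system of distinct absolute values automatically colors the complete expansion, because every signed-edge constraint is dominated by the distinct-absolute-value condition. The only point worth care is keeping straight that the relevant map is the \emph{inverse} of the injection $h$, which is well-defined precisely under the equality assumption being refuted.
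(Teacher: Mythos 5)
Your proof is correct and takes essentially the same approach as the paper's: both invoke Corollary \ref{resrepsat} to get the injection $h$, deduce $|\textup{abs}(C_L)|=|C|\le|V(\Sigma)|$, and rule out equality by observing that the inverse of the resulting bijection would be an $L$-coloring of $\Sigma$, contradicting $L\in\mathcal{L}$. The only difference is that you spell out explicitly why pairwise distinct absolute values guarantee properness in the complete expansion, a step the paper dismisses as ``clearly.''
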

\begin{proof}
The second inequality is our assumption on $\Sigma$. We need to show the first inequality.   Let $C$ be a representative subset of $C_L$ and $h$ be an injective $h\colon\,C\mapsto V(\Sigma)$ from  Corollary \ref{repsat}. Since $h$ is injective, we have $|C|\le |V(\Sigma)|$. However, if $|C|= |V(\Sigma)|$ then the mapping $h$ would be a bijection. Since any two distinct elements have distinct absolute values, the inverse of $h$ is clearly an $L$-coloring of $\Sigma$. This contradicts our assumption that $G$ is not $L$-colorable. Thus $|C|\neq |V(\Sigma)|$ and hence $|C|< |V(\Sigma)|$. Note that $|C|=|\textup{abs}(C_L)|$. This proves the corollary.
\end{proof}
\begin{cor}\label{2disjoint}
If there are $u$ and $v$ in $V(\Sigma)$ such that $L(u)\cap L(v)=\emptyset$, then $L(u)\cup L(v)={C_L}$, $|C_L|=4k$ and $|\textup{abs}({C_L})|=2k$.
\end{cor}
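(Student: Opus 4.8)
The plan is to prove this by a direct double-counting (pigeonhole) argument, with Corollary \ref{CLSigma} supplying the crucial upper bound. The disjointness hypothesis, together with the fact that every list has size at least $2k$, will force $C_L$ to be as large as it can possibly be; the three conclusions then fall out by squeezing a short chain of inequalities, with no case analysis required.

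First I would record the lower bound on $|C_L|$. Since $L(u)\cap L(v)=\emptyset$ and both $L(u),L(v)\subseteq C_L$, disjointness turns the union into a disjoint union, so that
\[
|C_L|\ge |L(u)\cup L(v)| = |L(u)|+|L(v)| \ge 2k+2k = 4k.
\]
Then I would establish the matching upper bound. Working inside $\mathbb{Z}^*$, each value $a\in\textup{abs}(C_L)$ can be the absolute value of at most the two integers $a$ and $-a$ of $C_L$, so $|C_L|\le 2|\textup{abs}(C_L)|$. Corollary \ref{CLSigma} gives $|\textup{abs}(C_L)|\le 2k$, whence $|C_L|\le 4k$.

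Combining the two bounds yields $|C_L|=4k$. The inequality $|C_L|\le 2|\textup{abs}(C_L)|\le 4k$ must therefore be tight throughout, which forces $|\textup{abs}(C_L)|=2k$ (and, incidentally, that both $a$ and $-a$ lie in $C_L$ for every $a\in\textup{abs}(C_L)$). Finally, since $4k\le |L(u)\cup L(v)|\le |C_L|=4k$ and $L(u)\cup L(v)\subseteq C_L$, the containment is an equality of sets, giving $L(u)\cup L(v)=C_L$; note this also pins down $|L(u)|=|L(v)|=2k$ exactly.

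I do not anticipate any genuine obstacle: the entire argument is a cardinality squeeze. The only points demanding care are that the bound $|C_L|\le 2|\textup{abs}(C_L)|$ uses the zero-free setting $C_L\subseteq\mathbb{Z}^*$ (so that no color and its negative collapse into one), and that Corollary \ref{CLSigma} delivers precisely the value $|\textup{abs}(C_L)|\le 2k$ needed to make the upper and lower bounds coincide.
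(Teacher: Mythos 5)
Your proof is correct and follows essentially the same squeeze argument as the paper: the disjointness and the list-size bound give $|C_L|\ge 4k$, the zero-free relation $|C_L|\le 2|\textup{abs}(C_L)|$ together with Corollary \ref{CLSigma} gives the matching upper bound, and tightness of the resulting chain yields all three conclusions at once. The only cosmetic difference is the order in which you traverse the chain of inequalities; the ingredients and the logic are identical to the paper's proof.
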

\begin{proof}
By our assumption on $L$, we have $|L(u)|,|L(v)|\ge 2k$. Thus, if $L(u)\cap L(v)=\emptyset$, then $|{C_L}|\ge |L(u)|+|L(v)|\ge 4k$ and hence $|\textup{abs}({C_L})|\ge 2k$. On the other hand, by Corollary \ref{CLSigma}, we have $|\textup{abs}(C_L)|\le 2k$. Thus, equalities must hold in all above inequalities. This proves the corollary.
\end{proof}
\begin{cor}\label{exactorder}
$|V(\Sigma)|=2k+1$.
\end{cor}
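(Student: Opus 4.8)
The plan is to argue by contradiction. Since Corollary \ref{CLSigma} already gives $|V(\Sigma)|\le 2k+1$, it suffices to rule out the possibility $|V(\Sigma)|\le 2k$. So I would assume $|V(\Sigma)|\le 2k$, fix some $L\in\mathcal{L}$, and show that $\Sigma$ is nevertheless $L$-colorable, contradicting $L\in\mathcal{L}$. The proof splits according to whether $\Sigma$ has a singleton, i.e.\ whether $\xi>0$.

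First suppose $\Sigma$ has a singleton $v$, so $\{v\}$ is a part. I would apply Lemma \ref{fourconds} with $A=\{v\}$, $\ell=1$, and $g(v)=c$ for an arbitrary $c\in L(v)$. Condition (a) holds trivially, since a single vertex is always properly colored. For (b), $|V(\Sigma-A)|=|V(\Sigma)|-1\le 2k-1=2(k-1)+1$, and this is exactly the place where the hypothesis $|V(\Sigma)|\le 2k$ is used. For (c), deleting the part $\{v\}$ from the complete $k$-partite graph $G$ leaves a complete $(k-1)$-partite graph whose complete expansion is $\Sigma-A$, so $\chi^*(\Sigma-A)=2(k-1)$. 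For (d), since $g(A)^{\pm}=\{c,-c\}$ and $|L(u)|\ge 2k$, we get $|L(u)\setminus\{c,-c\}|\ge 2k-2=2(k-1)$ for each $u\in V(\Sigma-A)$. Thus all four conditions are met and $\Sigma$ is $L$-colorable, a contradiction.

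Next suppose $\Sigma$ has no singleton. Then each of the $k$ parts has at least two vertices, so $|V(\Sigma)|\ge 2k$; together with $|V(\Sigma)|\le 2k$ this forces $|V(\Sigma)|=2k$ with every part of size exactly two. Picking any part $P=\{a,b\}$, Corollary \ref{nonsingledisjoint} gives $L(a)\cap L(b)=\bigcap_{v\in P}L(v)=\emptyset$. Now Corollary \ref{2disjoint} applies to the pair $a,b$ and yields $|\textup{abs}(C_L)|=2k$, which contradicts Corollary \ref{CLSigma}, namely $|\textup{abs}(C_L)|<|V(\Sigma)|=2k$. Hence neither case can occur, and $|V(\Sigma)|=2k+1$.

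The only genuinely delicate point is in the no-singleton case: Corollary \ref{nonsingledisjoint} guarantees merely that the intersection of the lists over an \emph{entire} part is empty, which is weaker than the disjointness of two specific lists that Corollary \ref{2disjoint} requires as input. What rescues the argument is precisely the standing assumption $|V(\Sigma)|\le 2k$, which forces every part to have exactly two vertices, so that the full-part intersection coincides with $L(a)\cap L(b)$. The singleton case, by contrast, is routine once one notices that condition (b) of Lemma \ref{fourconds} is the sole place where the bound $|V(\Sigma)|\le 2k$ is invoked.
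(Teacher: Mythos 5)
Your proof is correct, and it rests on exactly the two mechanisms the paper's own proof uses: Lemma \ref{fourconds} with $A=\{v\}$, $\ell=1$ for the singleton case, and the chain Corollary \ref{nonsingledisjoint}, then Corollary \ref{2disjoint}, then Corollary \ref{CLSigma} for a part of size $2$. The only difference is organizational. The paper splits into three cases: a part of size $2$, handled directly and affirmatively (since $2k=|\textup{abs}(C_L)|<|V(\Sigma)|\le 2k+1$ already forces $|V(\Sigma)|=2k+1$); a singleton, handled by contradiction via Lemma \ref{fourconds} exactly as you do; and the residual case in which every part has size at least $3$, which the paper dispatches with the separate counting argument $3k\le |V(\Sigma)|\le 2k+1$, hence $k=1$ and $|V(\Sigma)|=3$. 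By placing everything under the global contradiction hypothesis $|V(\Sigma)|\le 2k$, you make the no-singleton case collapse onto the size-$2$ situation (all parts have size at least $2$ and $|V(\Sigma)|\le 2k$ forces every part to have size exactly $2$), so the paper's third case never arises and no counting argument is needed. Your version is marginally more economical; the paper's version has the mild advantage that one of its cases produces the conclusion directly rather than by contradiction. The ``delicate point'' you flag is handled correctly: full-part list-intersection emptiness from Corollary \ref{nonsingledisjoint} does specialize to pairwise disjointness precisely because the parts have exactly two vertices under your standing hypothesis.
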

\begin{proof}
If $\Sigma$ has a part of size 2, then by Corollary \ref{nonsingledisjoint}, $L(u)\cap L(v)=\emptyset$. Thus, by Propositions \ref{2disjoint} and \ref{CLSigma}, $|\textup{abs}(C)|=2k$ and $|V(\Sigma)|=2k+1$.

Now consider the case that $\Sigma$ has a singleton $\{v\}$. Suppose to the contrary that $|V(\Sigma)|\le2k$. Choose $c\in L(v)$ and set $g(v)=c,A=\{v\}, l=1$. Then by Lemma \ref{fourconds}, $\Sigma$ is $L$-colorable. This is a contradiction.

Finally, we consider the case when $\Sigma$ contains neither a part of size 2 nor a singleton.  Then each part has size at least 3 and hence $3k\le |V(\Sigma)|\le 2k+1$. Thus $k=1$ and $|V(\Sigma)|= 2k+1$. This completes the proof of this corollary.
\end{proof}
\begin{prop}\label{threecases}
If $f\colon\,V(\Sigma)\mapsto C_L$ is a proper coloring (not necessarily proper $L$-coloring), then there are a representative subset $C$ of $C_L$ and a proper surjective coloring $g\colon\,V(\Sigma)\mapsto C$ such that for every color $c\in C$, the color class $g^{-1}(c)$ satisfies \\
\textup{(a)} $g(v)\in L(v)$ for all $v\in g^{-1}(c)$, or\\
\textup{(b)} $f(v)=c$ for all $v\in g^{-1}(c)$, or\\
\textup{(c)} $f(v)=-c$ for all $v\in g^{-1}(c)$.
\end{prop}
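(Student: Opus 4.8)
\section*{Proof proposal}

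The plan is to extract two structural consequences of properness, reduce the properness of the target coloring to a single clean condition, and then \emph{repair} $f$ into $g$ by recoloring as few vertices as possible, using the representative matching from Corollary \ref{resrepsat}. First I would record what properness of $f$ forces. Two vertices in distinct parts are joined by both a positive and a negative edge, so they get colors of different absolute value; two vertices in one part are joined by a negative edge, so they cannot get opposite colors. Hence each class $f^{-1}(d)$ lies inside a single part, and $f$ never uses both $d$ and $-d$, so $\textup{abs}$ is injective on $D=f(V(\Sigma))$. Write $A=\textup{abs}(C_L)$, $A_1=\textup{abs}(D)$, $A_2=A\setminus A_1$, and for $a\in A_1$ let $d_a$ be the unique color of $D$ with $|d_a|=a$. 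I would also note the key simplification: once $g$ takes values in a representative subset $C$ (whose colors have pairwise distinct absolute values and contain no opposite pair), properness of $g$ is \emph{equivalent} to the single requirement that every color class of $g$ lie in one part.

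Next I would fix the representative subset $C$ and the injection $h\colon C\to V(\Sigma)$ with $c\in L(h(c))$ from Corollary \ref{resrepsat}, and let $c_a\in C$ denote the element with $|c_a|=a$. The idea is to start from the $f$-based coloring $g_0(v)=c_{|f(v)|}$: this is proper, its classes are the recolored $f$-classes (each of type (b) or (c)), but it realizes only the colors with absolute value in $A_1$. Since $C$ is a representative subset, $g$ must be surjective onto $C$, so the colors of $A_2$ must also be realized; I would create them by recoloring suitable vertices $h(c_a)$ into type-(a) singletons $\{h(c_a)\}$ colored $c_a$ (valid since $c_a\in L(h(c_a))$). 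Recoloring $h(c_a)$ removes it from its old class, which may empty that class, so the repair must simultaneously ensure that (i) every absolute value of $A_2$ is realized, (ii) no retained $f$-class is emptied, and (iii) no vertex is orphaned by having its $f$-class converted to a singleton without itself being recolored.

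To meet (i)--(iii) at once I would use a fixed-point argument. Consider the monotone operator $\Phi(A')=A_2\cup\{a\in A_1: f^{-1}(d_a)\subseteq\{h(c_{a'}):a'\in A'\}\}$ on subsets of $A$, and take the least fixed point $A_*$ containing $A_2$, reached by iterating $\Phi$ from $A_2$ (the chain increases and stabilizes since $2^A$ is finite). I would then recolor exactly $\{h(c_a):a\in A_*\}$ as type-(a) singletons and keep $f^{-1}(d_a)$ for $a\in A_1\setminus A_*$ as type-(b)/(c) classes. The fixed-point equation is designed so that a vertex keeps its $f$-color precisely when its class is not collapsed, which rules out orphans (iii), while each retained class stays nonempty, giving surjectivity (i)--(ii); every class lies in one part, so by the reduction above $g$ is proper.

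The step I expect to be the main obstacle is showing the recoloring cannot cascade to swallow every vertex, i.e.\ that $A_*\neq A$. This is exactly where the counterexample hypothesis enters: if $A_*=A$, the fixed-point condition forces $V(\Sigma)=\bigcup_{a\in A_1}f^{-1}(d_a)\subseteq\{h(c_a):a\in A\}$, whence $|V(\Sigma)|\le|A|=|\textup{abs}(C_L)|$, contradicting $|\textup{abs}(C_L)|<|V(\Sigma)|$ from Corollary \ref{CLSigma}. Granting this, the construction yields the required $C$ and $g$, and I would finish by checking that each class is of type (a), (b) or (c) directly from the construction.
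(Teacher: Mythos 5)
Your proposal is correct, and it proves the proposition by a genuinely different mechanism than the paper, although from the same starting material. Both arguments take the pair $(C,h)$ from Corollary \ref{resrepsat} and the projected coloring (the paper's $\tau\circ f$, your $g_0(v)=c_{|f(v)|}$), whose color classes are exactly the $f$-classes and hence of type (b) or (c), and both repair non-surjectivity by installing a missing color $c$ as the type-(a) singleton $\{h(c)\}$. The difference is how the resulting cascade of emptied classes is controlled. The paper uses an extremal argument: among proper colorings into $C$ all of whose classes satisfy (a), (b) or (c), it takes one maximizing the number of colors $c$ with $g(h(c))=c$; a missing color $c_1$ then yields a local exchange (recolor $h(c_1)$ to $c_1$, which stays proper because $\pm c_1$ is unused and only shrinks the other classes), increasing that count --- a contradiction, so no tracking of the cascade is ever needed. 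You instead compute the entire cascade up front as the least fixed point $A_*$ of a monotone operator and recolor $\{h(c_a)\colon a\in A_*\}$ in one shot; the fixed-point identity gives both the absence of orphans (collapsed classes are fully recolored) and, in contrapositive, the nonemptiness of every retained class. This buys an explicit description of $g$, at the cost of more bookkeeping and of invoking Corollary \ref{CLSigma}, which the paper's proof never needs. In fact your ``main obstacle'' is vacuous: if $A_*=A$, the fixed-point condition forces every vertex to equal some $h(c_a)$, so your $g$ is a bijection onto $C$ with all classes of type (a), and the proposition's conclusion still holds (such a $g$ would even be an $L$-coloring, contradicting $L\in\mathcal{L}$, so this case indeed never arises --- but no appeal to Corollary \ref{CLSigma} is required). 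One wording slip, harmless to the construction: a vertex of a retained class can still be recolored (namely when it equals $h(c_{a'})$ for some $a'\in A_*$), so ``a vertex keeps its $f$-color precisely when its class is not collapsed'' is not literally true; what the fixed point actually guarantees is the two properties just stated, which is all the verification uses.
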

\begin{proof}
Let $C$ and $h$ be defined as in Corollary \ref{resrepsat}. For a proper coloring
$g\colon\,V(\Sigma)\mapsto C$ and a color $c\in C$, we say that $g$ agrees with $h$ at $c$ if $g(h(c))=c$.

Let $\tau$ be the projection from $C_L$ to $C$, that is, for each $c\in C_L$, $\tau(c)=c$ if $c\in C$ and $\tau(c)=-c$ otherwise. Note that any two colors in
$f(V(\Sigma))$ have different absolute values. The map $g_0=\tau\comp f$ is clearly a proper coloring of $\Sigma$. Now, let $g\colon\,V(\Sigma)\mapsto C$ be a proper coloring such that for every color $c\in g(V(\Sigma))$, the color class $g^{-1}(c)$ satisfies at least one of (a), (b) and (c). Note that such a coloring exists as either (b) or (c) holds for $g_0$ at any $c\in g_0(V(\Sigma))$. We assume further that the number of colors $c\in C$ at which $g$ agrees with $h$ is maximized. We show that $g$ is surjective. Otherwise, let $c_1\in C\setminus g(V(\Sigma))$ be arbitrary and define  a coloring $g_1$ as follows:
 \begin{equation}
 g_1(v)=
 \begin{cases}
 c_1,& \text{if~}v=h(c_1),\\
g(v),& \text{otherwise}.
\end{cases}
 \end{equation}
Note that for any $u\in V(\Sigma)\setminus \{v\}$, $g_1(u)=g(u)$ and hence $g_1(u)\not\in\{c_1,-c_1\}$. Thus, $g_1$ is proper as $g$ is proper. Moreover, $g_1$ agrees with $h$ at $c_1$ and at every color at which $g$ agrees with $h$.  We shall show that at least one of (a),(b) and (c) holds for $g_1$, which is a contradiction to our choice of $g$ and hence completes the proof.

Let $c\in g_1(V(\Sigma))$. If $c=c_1$ then $g_1^{-1}(c)=\{h(c_1)\}$  and hence $g_1(h(c_1))=c_1\in L(h(c_1))$ by Corollary \ref{resrepsat}. Thus (a) is satisfied. Next we consider the case that $c\in g_1(V(\Sigma))\setminus\{c_1\}$. Clearly, $g_1^{-1}(c)\subseteq g^{-1}(c)$.  As $g$ satisfies at least one of (a), (b) and (c), then so does $g_1$.
\end{proof}
In fact, Proposition \ref{threecases} also holds if we remove (c) from the proposition. We write it as a corollary.
\begin{cor}\label{twocases}
If $f\colon\,V(\Sigma)\mapsto C_L$ is a proper coloring, then there are a representative subset $C$ of $C_L$ and a proper surjective coloring $g\colon\,V(\Sigma)\mapsto C$ such that for every color $c\in C$, the color class $g^{-1}(c)$ satisfies either \\
\textup{(a)} $g(v)\in L(v)$ for all $v\in g^{-1}(c)$, or\\
\textup{(b)} $f(v)=c$ for all $v\in g^{-1}(c)$.
\end{cor}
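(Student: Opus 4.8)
The plan is to deduce the corollary directly from Proposition \ref{threecases} by post-processing its output, rather than rerunning the argument from scratch. First I would apply Proposition \ref{threecases} to the given proper coloring $f$, obtaining a representative subset $C$ of $C_L$ and a proper surjective coloring $g\colon\,V(\Sigma)\mapsto C$ for which every color class $g^{-1}(c)$ satisfies at least one of (a), (b), (c). The only discrepancy with what the corollary demands is the possible presence of color classes that satisfy (c) but neither (a) nor (b); let $D\subseteq C$ be the set of such colors. By the guarantee of Proposition \ref{threecases}, each $c\in D$ satisfies (c), so $f(v)=-c$ for all $v\in g^{-1}(c)$.

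The key move is to flip the representative on exactly these classes. I would set $C'=(C\setminus D)\cup\{-c\colon\,c\in D\}$ and define $g'(v)=-g(v)$ when $g(v)\in D$ and $g'(v)=g(v)$ otherwise. For a flipped class the color becomes $-c$, and property (c) for $g$, namely $f(v)=-c$, reads precisely $f(v)=-c=g'(v)$, which is property (b) for $g'$; the untouched classes continue to satisfy (a) or (b). Hence every class of $g'$ satisfies (a) or (b), as required. Since $g$ is surjective we have $g^{-1}(c)\neq\emptyset$ for each $c\in D$, so $-c\in f(V(\Sigma))\subseteq C_L$ and thus $C'\subseteq C_L$; moreover flipping preserves absolute values, so $\textup{abs}(C')=\textup{abs}(C)=\textup{abs}(C_L)$ with all absolute values still distinct, whence $C'$ is again a representative subset and $g'$ is a surjection onto it (the map $c\mapsto -c$ on $D$ causes no collisions, as $C$ already represents each absolute value exactly once).

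The step I expect to require the most care is verifying that $g'$ is still a proper coloring of $\Sigma$. Here I would exploit the structure of the complete expansion, under which properness amounts to two conditions: $|g'(u)|\neq|g'(v)|$ whenever $u,v$ lie in different parts (they are joined by both a positive and a negative edge), and $g'(u)\neq -g'(v)$ whenever $u,v$ lie in the same part (joined by a negative edge only). The first condition is inherited verbatim from $g$, since flipping signs preserves absolute values; the second holds automatically for any map into a representative subset, because two same-part vertices receive either equal colors (a nonzero color is never its own negative) or colors of distinct absolute value (hence non-opposite), and $g'$ maps into the representative subset $C'$. This confirms $g'$ is proper and finishes the derivation. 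I would remark briefly that one cannot simply rerun the proof of Proposition \ref{threecases} from the sign-preserving coloring $g_0=f$ (which trivially satisfies (b) everywhere): the fixed representative subset of Corollary \ref{resrepsat} need not be sign-compatible with $f$, and it is precisely the projection $\tau$ used there that forces property (c) to appear -- which is exactly the defect the flipping above removes.
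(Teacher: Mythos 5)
Your proposal is correct and follows essentially the same route as the paper: apply Proposition \ref{threecases}, collect the colors whose classes fall under case (c), flip the sign of exactly those colors in both the representative subset and the coloring (the paper calls this set $S$ and flips all of case (c), yours flips the slightly smaller set $D$, an immaterial difference), and then check that the result is still a representative subset onto which the modified coloring maps surjectively and properly. Your verification of properness is in fact more explicit than the paper's, which dismisses it with ``one easily checks,'' but the underlying argument is the same.
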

\begin{proof}
Let $C$ be a representative subset of $C_L$ and $g\colon\,V(\Sigma)\mapsto C$ be the surjective mapping as in Proposition \ref{threecases}. Let
$S$ be the set of colors in $C$ such that (c) in Proposition \ref{threecases} holds.  Let $C_1=(C\setminus S)\cup (-S)$ and define $g_1\colon\,V(\Sigma)\mapsto \mathbb{Z}^*$ as follows:
 \begin{equation}
 g_1(v)=
 \begin{cases}
 -g(v) & \text{if~}g(v)\in S,\\
g(v) & \text{otherwise}.
\end{cases}
 \end{equation}
 One easily check that $(C_1,g_1)$ satisfy either (a) or (b). Moreover, for any $v\in V(\Sigma)$ we have either $g_1(v)\in L(v)$ or $g_1(v)=f(v)$. Either case implies $g_1(v)\in C_L$. Thus $C_1$ is a subset of $C_L$. Clearly $|C_1|=|C|$ and $C_1$ contains no opposite pairs. This shows that $C_1$ is a representative subset of $C_L$.
\end{proof}

\subsection{No Near $L$-coloring for $L\in \mathcal{L}$}
Define
\begin{equation}
\gamma_L=|V(\Sigma)|-|\textup{abs}(C_L)|
\end{equation}
Note that $\gamma_L>0$ by Corollary \ref{CLSigma}.

\begin{defi}\label{deffre} \textup{A color $c\in C_L$ is
\begin{itemize}
\item{\emph{globally frequent} for $L$ if it appears in the lists of at least $k+1$ vertices of $\Sigma$.}
\item{\emph{frequent among singletons} for $L$ if it appears in the lists of at least $\gamma_L$ singletons of $\Sigma$.}
\item{\emph{frequent} if $c$ is either globally frequent or frequent among singletons.}
\end{itemize}
}
\end{defi}
\begin{defi}\textup{
A proper coloring $f\colon\,V(\Sigma)\mapsto C_L$ is a \emph{weak} $L$-\emph{coloring} if for every vertex $v\in V(\Sigma)$, either\\
 \indent (a) $f(v)\in L(v)$, or\\
  \indent (b) $f^{-1}(f(v))=\{v\}$.\\
In addition,  $f$ is a \emph{near} $L$-coloring if  (b) is replaced by the following stronger requirement:\\
 \indent (b$'$) $f^{-1}(f(v))=\{v\}$ and $f(v)$ is frequent.
}
\end{defi}
\begin{prop}\label{nearCsur}
If there is a weak (resp. near) $L$-coloring $f$, then there are a representative subset $C$ of $C_L$ and a weak (resp. near) surjective $L$-coloring $g\colon\, V(\Sigma)\mapsto C$ such that

\textup{(\RNum{1})} For each $v\in V(\Sigma)$, $f(v)\in L(v)$ implies $g(v)\in L(v)$, and

\textup{(\RNum{2})} For each $v\in V(\Sigma)$, $g(v)\not\in L(v)$ implies $g(v)=f(v)$.
\end{prop}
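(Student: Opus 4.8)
The plan is to imitate the extremal recoloring argument used for Proposition \ref{threecases}, but to carry the two extra bookkeeping conditions (\RNum{1}) and (\RNum{2}) together with the weak/near constraint through every step.

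First I would record a structural fact. Since $\Sigma$ is the complete expansion of a complete $k$-partite graph, every pair of distinct vertices is joined by a negative edge: two vertices in the same part are joined by a negative edge, and two vertices in different parts are joined by two edges of opposite signs, one of which is negative. Hence any proper coloring $p$ satisfies $p(u)\neq -p(v)$ for all distinct $u,v$, so $p(V(\Sigma))$ contains no pair of opposite colors. In particular $f(V(\Sigma))$ contains no opposite pair, which means that a set of colors used by such a coloring has all absolute values distinct and can therefore be completed to a representative subset of $C_L$.

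Next I would fix a representative subset $C_0$ of $C_L$ together with the injective map $h\colon C_0\mapsto V(\Sigma)$ satisfying $c\in L(h(c))$ provided by Corollary \ref{resrepsat}. I would then consider the family $\mathcal{G}$ of all proper colorings $g\colon V(\Sigma)\mapsto\mathbb{Z}^*$ that are weak (resp. near) $L$-colorings, satisfy (\RNum{1}) and (\RNum{2}) with respect to $f$, and whose image $g(V(\Sigma))$ contains no opposite pair. I would check that $f\in\mathcal{G}$: it is a weak (resp. near) $L$-coloring by hypothesis, it satisfies (\RNum{1}) and (\RNum{2}) tautologically when $g=f$, and its image has no opposite pair by the previous paragraph. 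Condition (\RNum{2}) together with $f(V(\Sigma))\subseteq C_L$ forces $g(V(\Sigma))\subseteq C_L$ for every $g\in\mathcal{G}$, so $\mathcal{G}$ is finite and nonempty. I would then choose $g\in\mathcal{G}$ maximizing the number of colors $c\in C_0$ at which $g$ \emph{agrees with} $h$, i.e. $g(h(c))=c$, and claim that this $g$ is surjective onto the representative subset $C:=g(V(\Sigma))$, equivalently $\textup{abs}(g(V(\Sigma)))=\textup{abs}(C_L)$. If not, I would pick $a\in\textup{abs}(C_L)\setminus\textup{abs}(g(V(\Sigma)))$ and the unique $c_1\in C_0$ with $|c_1|=a$, and recolor only the vertex $h(c_1)$ by setting $g_1(h(c_1))=c_1$ and $g_1=g$ elsewhere. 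Because $a$ is not used by $g$, no vertex has a color in $\{c_1,-c_1\}$, so properness is preserved and no opposite pair is created; the new class $g_1^{-1}(c_1)=\{h(c_1)\}$ satisfies property (a) since $c_1\in L(h(c_1))$; the only other affected class is the old class of $h(c_1)$, which either disappears or, having contained at least two vertices, must already have satisfied (a), so its survivors still satisfy (a). Hence $g_1$ remains weak (resp. near). Conditions (\RNum{1}) and (\RNum{2}) hold at $h(c_1)$ because $g_1(h(c_1))\in L(h(c_1))$ and are inherited elsewhere. Thus $g_1\in\mathcal{G}$ yet agrees with $h$ at $c_1$ and at every color where $g$ did, contradicting maximality; this yields the desired surjective $g$ onto the representative subset $C=g(V(\Sigma))$.

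I expect the main obstacle to be the verification, in the recoloring step, that all four defining properties of $\mathcal{G}$ survive simultaneously — in particular that deleting $h(c_1)$ from its old color class cannot destroy the weak/near property (this is exactly where I use that a non-singleton color class must satisfy (a), so that no frequency obligation arises in the near case) and that condition (\RNum{2}) is never violated. A secondary but crucial point is the choice of potential function: taking \emph{agreement with} $h$ rather than $|\textup{abs}(g(V(\Sigma)))|$ makes the increment strict, since recoloring $h(c_1)$ could otherwise erase the old absolute value of $h(c_1)$ and leave the number of distinct absolute values unchanged.
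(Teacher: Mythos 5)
Your proof is correct, but it takes a structurally different route from the paper's. The paper disposes of Proposition \ref{nearCsur} in a few lines by citing Corollary \ref{twocases}: since $f$ is proper, that corollary supplies a representative subset $C$ and a proper surjective $g\colon V(\Sigma)\mapsto C$ each of whose color classes satisfies (a) $g(v)\in L(v)$ for all its vertices, or (b) $f(v)=c$ for all its vertices; conditions (\RNum{1}), (\RNum{2}) and the weak/near property are then read off from this dichotomy (if $g(v)\notin L(v)$, the class of $v$ is of type (b), so $g(v)=f(v)\notin L(v)$, and weakness of $f$ gives $f^{-1}(c)=\{v\}$, whence $g^{-1}(c)=\{v\}$). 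You instead bypass Proposition \ref{threecases} and Corollary \ref{twocases} altogether and re-run the underlying extremal recoloring argument directly from Corollary \ref{resrepsat}, building the weak/near property and conditions (\RNum{1})--(\RNum{2}) into the family $\mathcal{G}$ over which you maximize agreement with $h$. The trade-off: the paper's reduction is shorter because the recoloring work was already done once in Proposition \ref{threecases}, but it requires the sign-flipping step of Corollary \ref{twocases} to eliminate alternative (c); your version repeats the recoloring verification but never needs the sign flip, because you do not fix the representative subset in advance --- you only require the image to be an opposite-pair-free subset of $C_L$ (automatic for proper colorings of a complete expansion, as you correctly observe) and take $C=g(V(\Sigma))$ a posteriori, with maximality of agreement with $h$ forcing $\textup{abs}(g(V(\Sigma)))=\textup{abs}(C_L)$. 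The delicate points are all handled soundly: the potential increases strictly (since $|c_1|$ is unused by $g$, the color $c_1$ cannot already be one where $g$ agrees with $h$); properness survives because no vertex carries a color of absolute value $|c_1|$; the weak/near condition survives the departure of $h(c_1)$ from its old class because any class of size at least two consists entirely of vertices satisfying (a); and (\RNum{1}), (\RNum{2}) are inherited vertex by vertex.
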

\begin{proof}
 Let $f\colon\,V(\Sigma)\mapsto C_L$ be a weak $L$-coloring. Note that $f$ is proper. By Corollary \ref{twocases}, there are a representative subset $C$ of $C_L$ and a proper surjective coloring $g\colon\,V(\Sigma)\mapsto C$ such that for every $c\in C$, the color class $g^{-1}(c)$ satisfies either\\
\textup{(a)} $g(v)\in L(v)$ for all $v\in g^{-1}(c)$, or\\
\textup{(b)} $f(v)=c$ for all $v\in g^{-1}(c)$.

We show that $g$ is a weak $L$-coloring.  Let $v$ be any vertex in $\Sigma$, say $v\in g^{-1}(c)$ for some $c\in C$. If $g(v)\in L(v)$ then we are done. Now, assume that $g(v)\not\in L(v)$. Then $c$ satisfies  (b), i.e., $f(v)=c$. Note that $c=g(v)$. Thus, we have $f(v)\not\in L(v)$ and so (\RNum{1}) holds. Since $f$ is a weak $L$-coloring,  $f^{-1}(c)=\{v\}$. By (b),  $g^{-1}(c)\subseteq f^{-1}(c)$, implying $g^{-1}(c)=\{v\}$ and hence $g(v)=f(v)$ and so (\RNum{2}) holds. This proves that $g$ is a weak $L$-coloring, in addition, $g$ is a near $L$-coloring if $f$ is near.
\end{proof}

Suppose that $C$ is a representative subset of $C_L$ and $f$ is a proper coloring of $\Sigma$ which maps surjectively to $C$. Let $V_f=\{f^{-1}(c)\colon\,c\in C\}$ be the set of color classes under $f$. We define $B_L^f$ to be the bipartite graph with bipartition $(V_f,C_L)$ where each color class $f^{-1}(c)$ is joined to the colors of $\cap_{v\in f^{-1}(c)}L(v)$.
\begin{lem}\label{manysingL}
Suppose that $f\colon\,V(\Sigma)\mapsto C$ is a surjective weak $L$-coloring of $\Sigma$, where $C$ is a representative subset of $C_L$. Let $S$ be a set that maximizes $|S|-|\textup{abs}(N_{B_L^f}(S))|$ over all subsets of $V_f$. If $V_f\setminus S$ contains at least $\gamma_L$ singletons of $\Sigma$, then there is an $L$-coloring of $\Sigma$.
\end{lem}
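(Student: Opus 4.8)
The plan is to turn the surjective weak $L$-coloring $f$ into a genuine $L$-coloring of $\Sigma$, which contradicts $L\in\mathcal{L}$ and so yields the conclusion. The natural strategy is to split $V_f$ into the ``deficient'' classes $S$ and the rest $V_f\setminus S$, color $V_f\setminus S$ by a good matching, and then handle the classes of $S$ together with the spare singletons. Here I would first record the basic translation: a good matching that assigns to each class one color lying in the common list of that class, with pairwise distinct absolute values, is exactly a proper $L$-coloring of the corresponding vertices, because within a part all edges are negative (so equal colors are harmless) while distinct classes receive colors of distinct absolute value. Thus ``coloring a set of classes'' will always mean producing such a good matching into $B_L^f$.

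The first concrete step is to exploit the maximality of $S$. For every $S'\subseteq V_f\setminus S$, comparing $S\cup S'$ with $S$ gives $|S'|\le |\textup{abs}(N_{B_L^f}(S\cup S'))|-|\textup{abs}(N_{B_L^f}(S))|$, and since the new absolute values contributed by $S'$ lie outside $T:=\textup{abs}(N_{B_L^f}(S))$, this rearranges to $|\textup{abs}(N_{B_L^f}(S'))\setminus T|\ge |S'|$. Applying Theorem \ref{signHall} to the bipartite graph on $(V_f\setminus S,\,C_L)$ restricted to colors of absolute value outside $T$, I obtain a good matching saturating $V_f\setminus S$ all of whose colors have absolute value outside $T$. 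This $L$-colors every class of $V_f\setminus S$ and, crucially, leaves the scarce absolute values of $T$ untouched for the classes of $S$.

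It then remains to color the vertices in the classes of $S$ using colors of absolute value in $T$ (plus the few leftover values), and this is where the singleton hypothesis enters. Here I would invoke Lemma \ref{fourconds}. The set $A$ to be pre-colored must be chosen so that the number of colors used on it is at most the number of whole parts it destroys (forced by comparing (c) and (d)), while removing at least twice as many vertices as colors (forced by (b)). By Corollary \ref{nonsingledisjoint} no nonsingleton part can be colored with a single color, so singletons are the only ``efficient'' parts for this purpose: each uses one color, destroys one part, yet removes only one vertex, whereas a nonsingleton part removes many vertices at the cost of several colors. The $\gamma_L$ guaranteed singletons of $V_f\setminus S$ are meant to supply exactly the flexibility reconciling these competing demands, with $\gamma_L=|V(\Sigma)|-|\textup{abs}(C_L)|$ and $|V(\Sigma)|=2k+1$ (Corollary \ref{exactorder}) pinning down the threshold.

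The hard part will be precisely this balancing act: choosing $A$ and $\ell$ so that conditions (b) (few residual vertices), (c) (small zero-free chromatic number of the residual) and (d) (large surviving lists) of Lemma \ref{fourconds} hold simultaneously. Since the lists have size only $2k$ and every removed color shrinks the budget $2(k-\ell)+1$ by two, a naive choice — removing all of $V_f\setminus S$, or peeling off only the singletons — overshoots one of (b), (c); so the good matching must be interleaved with the singletons, devoting the absolute values of $T$ to the classes of $S$, using the singletons both to absorb the deficiency $|S|-|T|$ and to keep the residual within the range $|V|\le\chi^*+1$ of the minimal counterexample. Making this accounting close, and in particular seeing why exactly $\gamma_L$ singletons suffice rather than merely ``enough,'' is the technical heart of the argument.
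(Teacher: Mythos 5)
Your first step is exactly right and matches the paper: maximality of $S$ yields, for every $T'\subseteq V_f\setminus S$, the inequality $|\textup{abs}(N_{B_L^f}(T')\setminus (N_{B_L^f}(S))^{\pm})|\ge |T'|$, and Theorem \ref{signHall} then produces a good matching saturating $V_f\setminus S$ that avoids all colors whose absolute value lies in $\textup{abs}(N_{B_L^f}(S))$. But from that point on your proposal has a genuine gap, which you yourself flag: you never specify $A$ and $\ell$, and the direction you sketch --- color the classes of $S$ with colors of absolute value in $T$, with the singletons absorbing the deficiency $|S|-|T|$ --- is not workable. The classes of $S$ violate Hall's condition, so no scheme can color all of them; some must be left uncolored, and the real content of the lemma is deciding which ones and why the induction still closes.

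The missing idea is this. Take $\ell$ to be the number of color classes of $f$ with more than one element, let $X$ be the union of the \emph{singleton} color classes lying in $S$, and set $A=V(\Sigma)\setminus X$. The multi-element classes of $S$ are colored by $f$ itself: since $f$ is a \emph{weak} $L$-coloring, any vertex $v$ with $f(v)\notin L(v)$ is alone in its class, so on a class with at least two vertices $f$ already assigns a color belonging to every member's list. This is the one place where weakness is used, and it is what your plan overlooks. These $f$-colors lie in $N_{B_L^f}(S)$ while the matching colors lie outside $(N_{B_L^f}(S))^{\pm}$, so the two partial colorings combine into an $L_A$-coloring $g$, giving (a) of Lemma \ref{fourconds}. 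Conditions (b)--(d) then follow by counting: $A$ contains every multi-element class, so $|A|\ge 2\ell$ and $|V(\Sigma)\setminus A|\le 2(k-\ell)+1$; each $v\in X$ has $L(v)\subseteq N_{B_L^f}(S)$, hence its list meets none of the matching colors and loses only the at most $\ell$ colors used on multi-element classes of $S$, giving $|L(v)\setminus g(A)^{\pm}|\ge 2(k-\ell)$; and surjectivity of $f$ onto the representative set $C$ gives $\ell\le |V(\Sigma)|-|\textup{abs}(C_L)|=\gamma_L$, so the $\gamma_L$ singletons of $V_f\setminus S$ --- which are whole parts of $\Sigma$ contained in $A$ --- force $\Sigma-A$ to have at most $k-\gamma_L\le k-\ell$ parts, i.e.\ $\chi^*(\Sigma-A)\le 2(k-\ell)$. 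Note that the hypothesis about $\gamma_L$ singletons is used only for condition (c); it does not compensate for any deficiency of $S$, contrary to the heuristic in your last paragraph.
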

\begin{proof}
Let $\ell$ be the number of color classes of $f$ with more than one element. Define $X$ to be the union of all color classes in $S$ with exactly one element and define $A=V(\Sigma)\setminus X$. We shall construct an $L_A$-coloring $g$ of $\Sigma[A]$ and prove that $\Sigma$ is $L$-colorable by verifying conditions (a)-(d) in Lemma \ref{fourconds}.

We use a simple equality: $|\textup{abs}(Z_1\cup Z_2)|=|\textup{abs}(Z_1)|+|\textup{abs}(Z_2\setminus Z_1^{\pm})|$ for any finite $Z_1,Z_2\subseteq \mathbb{Z}^*$. From this equality and the fact that $S$  maximizes $|S|-|N_{B_L^f}(S)|$ over all subsets of $V_f$, we have for any $T\subseteq V_f\setminus S$,
\begin{equation}
|\textup{abs}(N_{B_L^f}(T)\setminus (N_{B_L^f}(S))^{\pm})|\ge |T|.
\end{equation}

By  Theorem \ref{signHall}, there is a good matching $M$ in $B_L^f-(N_{B_L^f}(S))^{\pm}$ that saturates $V_f\setminus S$. Let $A_1$ be the union of all color classes in $V_f\setminus S$ and $g_1$ be the mapping on $A$ corresponding to $M$. As $f$ is proper, one easily see that $g_1$ is an $L_{A_1}$-coloring. Let $A_2$ be the union of all color classes in $S$ with at least two vertices and $g_2$ be $f$ restricted on $A_2$. As $f$ is a weak $L$-coloring, one easily check that $f(v)\in L(v)$ for each $v\in A_2$ since otherwise  $f^{-1}(f(v))=\{v\}$, contradicting the definition of $A_2$. This proves that $g_2$ is an $L_{A_2}$-coloring. Thus $g_2(A_2)\subseteq N_{B_L^f}(S)$. Note that $g_1(A_1)\subseteq C_L\setminus (N_{B_L^f}(S))^{\pm}$. This implies $(g_1(A_1))^{\pm}\cap (g_2(A_2))^{\pm}=\emptyset$, i.e., $|c_1|\neq|c_2|$ for any $c_1\in g_1(A_1)$ and $c_2\in g_2(A_2)$. Thus we can obtain an $L_{A}$ coloring by combining $g_1$ with $g_2$. This proves (a). If $A=V(\Sigma)$ then we are done. We assume that $A$ is a proper subset of $V(\Sigma)$, i.e., $X\neq \emptyset$. For every vertex $v\in X$, we see that $\{v\}$ is a color class of $f$ contained in $S$ and hence $L(v)\subseteq N_{B_L^f}(S)$. Consequently,
$$L(v)\cap (g_1(A_1))^{\pm}\subseteq  (N_{B_L^f}(S))\cap (g_1(A_1))^{\pm}=\emptyset,$$ and hence
$$|L(v)\setminus(g(A))^{\pm}|=|L(v)\setminus(g_2(A_2))^{\pm}|\ge 2(k-|A_2|).$$
As $|A_2|\le \ell$, we have $|L(v)\setminus(g(A))^{\pm}|\ge 2(k-l)$, that is, (d) holds.

As $A$ contains all color classes with more than one element, we have $|A|\ge 2\ell$. Consequently $|V(\Sigma)\setminus A|\le 2(k-\ell)+1$, i.e., (b) holds.
Since $f$ is surjective and $C$ is a representative subset of $C_L$, we have $\ell\le |V(\Sigma)|-|C|=|V(\Sigma)-|\textup{abs}(C_L)|$, i.e., $\ell\le \gamma_L$. Note that $A$ contains each color class in $V_f\setminus S$.   As $V_f\setminus S$ contains at least $\gamma_L$ singletons of $\Sigma$, we know that $\Sigma-A$ has at most $k-\gamma_L$ parts and hence $\chi^*(\Sigma-A)\le 2(k-\gamma_L)$. Thus, $\chi^*(\Sigma-A)\le 2(k-\ell)$, so (c) holds.  This complete the proof of this lemma.
\end{proof}
\begin{prop}\label{nonear}
There is no near $L$-coloring.
\end{prop}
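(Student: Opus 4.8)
The plan is to argue by contradiction and reduce everything to an application of Lemma \ref{manysingL}. Suppose a near $L$-coloring exists. By Proposition \ref{nearCsur} I may fix a representative subset $C$ of $C_L$ and a surjective near $L$-coloring $f\colon V(\Sigma)\to C$; among all such pairs $(C,f)$ I would choose one minimizing the number of \emph{deficient} vertices, i.e.\ those $v$ with $f(v)\notin L(v)$, and call a color class \emph{deficient} if it consists of a single such vertex. Since every vertex outside a deficient class receives a color of its own list (by the near condition, every multi-vertex class must satisfy (a), as its vertices cannot meet (b$'$)), the only obstruction to $f$ being an $L$-coloring are the deficient classes; hence if there are none we already have an $L$-coloring, a contradiction. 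So at least one deficient class exists, and each such class is a singleton color class $\{v\}$ whose color $f(v)$ is \emph{frequent}.

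Next I would set up the deficiency maximizer exactly as in Lemma \ref{manysingL}: let $S\subseteq V_f$ maximize $|S|-|\mathrm{abs}(N_{B_L^f}(S))|$, with maximum value $m$. If $m=0$ then $|\mathrm{abs}(N_{B_L^f}(S))|\ge|S|$ for all $S$, so Theorem \ref{signHall} gives a good matching saturating all of $V_f$; assigning each class the matched color of $\cap_{v}L(v)$ gives a proper $L$-coloring, since distinct absolute values across classes guarantee properness and each vertex then receives a color of its own list, a contradiction. Thus $m\ge 1$. The goal is now precisely the hypothesis of Lemma \ref{manysingL}: to show that $V_f\setminus S$ contains at least $\gamma_L$ singletons of $\Sigma$. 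Once this holds, Lemma \ref{manysingL} yields an $L$-coloring and the proof is complete.

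To produce these singletons I would first record the crude count from surjectivity: there are exactly $|\mathrm{abs}(C_L)|=|V(\Sigma)|-\gamma_L$ color classes, so the total excess $\sum_c(|f^{-1}(c)|-1)$ equals $\gamma_L$ and there are at most $\gamma_L$ multi-vertex classes. Every singleton of $\Sigma$ forms its own singleton color class, so it suffices to bound how many singletons of $\Sigma$ can be trapped inside $S$. Here frequency enters, and I would split on the type of a deficient color $c=f(v)$. If $c$ is frequent among singletons, then $c$ lies in the lists of at least $\gamma_L$ singletons of $\Sigma$; I would try to use these as a reservoir, so that if fewer than $\gamma_L$ singletons lay in $V_f\setminus S$, one of these $\ge\gamma_L$ singletons could absorb $c$ (recoloring $v$ from $L(v)$ and shifting $c$ onto that singleton), strictly decreasing the number of deficient vertices and contradicting the minimal choice of $f$. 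If instead $c$ is globally frequent, it occurs in at least $k+1$ of the $2k+1$ lists, and a majority/pigeonhole argument should force $|c|\in\mathrm{abs}(N_{B_L^f}(S))$ and, more generally, show that placing a singleton of $\Sigma$ into $S$ is wasteful for the objective (each such singleton carries a list of at least $k$ absolute values), so a maximizer cannot hide more than $\xi-\gamma_L$ singletons inside $S$.

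I expect the genuine difficulty to be this last counting step: converting ``fewer than $\gamma_L$ singletons of $\Sigma$ lie in $V_f\setminus S$'' into either a legitimate recoloring that reduces the number of deficient vertices, contradicting minimality, or a direct violation of the maximality of $S$. The two notions of frequency must be handled together, and the interplay between the maximizer $S$, the excess $\gamma_L$, and the number $\xi$ of singletons is delicate; in particular one must check that recoloring a deficient vertex neither creates new deficient vertices nor destroys surjectivity onto a representative set. Controlling these side effects, rather than the final invocation of Lemma \ref{manysingL}, is where the substance of the argument lies.
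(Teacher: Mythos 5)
Your setup matches the paper's: assume a near $L$-coloring exists, pass via Proposition \ref{nearCsur} to a surjective near $L$-coloring $f$ onto a representative subset $C$, take $S\subseteq V_f$ maximizing $|S|-|\textup{abs}(N_{B_L^f}(S))|$, note the maximum is positive (else Theorem \ref{signHall} gives a good matching and hence an $L$-coloring), and aim at the hypothesis of Lemma \ref{manysingL}. But the heart of the proof --- producing $\gamma_L$ singletons in $V_f\setminus S$, or otherwise reaching a contradiction --- is exactly what you leave as a sketch, and the sketched ideas do not close. The missing observation is that the relevant deficient class can be \emph{located inside $S$ with its color outside $N_{B_L^f}(S)$}: since the colors of the classes in $S$ lie in $C$ and hence have pairwise distinct absolute values, $|S|>|\textup{abs}(N_{B_L^f}(S))|$ forces some $c_1$ with $f^{-1}(c_1)\in S$ and $c_1\notin N_{B_L^f}(S)$; then some $v\in f^{-1}(c_1)$ has $c_1\notin L(v)$, and the near condition gives $f^{-1}(c_1)=\{v\}$ with $c_1$ frequent. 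This localization powers both cases. If $c_1$ is frequent among singletons, the $\gamma_L$ singletons whose lists contain $c_1$ cannot lie in $S$ (a singleton class $\{u\}\in S$ would give $c_1\in L(u)\subseteq N_{B_L^f}(S)$), so they all lie in $V_f\setminus S$ and Lemma \ref{manysingL} finishes --- no recoloring is needed. If $c_1$ is globally frequent, the contradiction is numerical and does not involve Lemma \ref{manysingL} at all: $L(v)\subseteq N_{B_L^f}(S)$ gives $|\textup{abs}(N_{B_L^f}(S))|\ge k$, hence $|S|\ge k+1$, while every class of $S$ contains a vertex whose list misses $c_1$ and there are at most $(2k+1)-(k+1)=k$ such vertices, so $|S|\le k$.

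Your substitutes for these two steps have genuine gaps. The Case 2 swap (``recolor $v$ from $L(v)$ and shift $c$ onto a reservoir singleton'') is not available: nothing guarantees $L(v)$ contains a color keeping the coloring proper, since the other $2k-\gamma_L$ classes occupy distinct absolute values while $|\textup{abs}(L(v))|$ may be as small as $k$; and even when a swap is possible, one must re-verify properness, surjectivity onto a representative set, and the near condition, so your minimality over near colorings need not be contradicted. In Case 1 your claim that pigeonhole ``forces $|c|\in\textup{abs}(N_{B_L^f}(S))$'' points the wrong way (the useful fact is $c_1\notin N_{B_L^f}(S)$), and the assertion that $S$ hides at most $\xi-\gamma_L$ singletons is the statement to be proved, not an argument for it. Finally, beware circularity: inequalities such as $\xi\ge\gamma_L$ (Corollary \ref{xigam}) are downstream consequences of this very proposition, so they cannot be invoked here; in the paper the needed singletons come directly from the frequency of $c_1$ together with $c_1\notin N_{B_L^f}(S)$, with no such input.
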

\begin{proof}
Suppose to the contrary that there is a near $L$-coloring $f$.  Let $C=f(V(\Sigma))$. By Proposition \ref{nearCsur}, we may assume that $C$ is a representative subset of $C_L$. If $M$ is a good matching in $B_L^f$ that saturates $V_f$, then $M$ clearly indicates an $L$-coloring with the same color classes as $f$. This is a contradiction to our assumptions on  $\Sigma$ and $L$.  Therefore, no such a matching exists. By Theorem \ref{signHall}, there is a subset $S$ of $V_f$ such that $|S|>|\textup{abs}(N_{B_L^f}(S))|$. We assume further that $|S|-|\textup{abs}(N_{B_L^f}(S))|$ is maximized over all subsets of $V_f$.

Let $S=\{f^{-1}(c_1),f^{-1}(c_2),\ldots,f^{-1}(c_p)\}$. As $\{c_1,c_2,\ldots,c_p\}\subseteq C$, we see that $|c_1|,|c_2|,\ldots,|c_p|$ are pairwise distinct. Since $|S|>|\textup{abs}(N_{B_L^f}(S))|$, at least one of $c_i's$, say $c_1$, does not belong to $N_{B_L^f}(S)$. In particular, $c_1\not \in N_{B_L^f}(f^{-1}(c_1))$. By the construction of the edge set in  $B_L^f$, we know that there is a vertex $v\in f^{-1}(c_1)$ such that $c_1\not\in L(v)$. Of course, $f(v)=c_1$. Since $f$ is a near $L$-coloring, it must happen that $c_1$ is frequent and $f^{-1}(c_1)=\{v\}$.

To obtain a contradiction, we consider two cases:

\noindent\emph{Case} 1.  $c_1$ is globally frequent for $L$.

Since $f^{-1}(c_1)=\{v\}\in S$, we have that $N_{B_L^f}(S)\supseteq N_{B_L^f}(f^{-1}(c_1))=L(v)$ and hence $|N_{B_L^f}(S)|\ge 2k$. This implies that
\begin{equation}\label{Slargerk}
|S|> |\textup{abs}(N_{B_L^f}(S))|\ge k
\end{equation}

On the other hand, since $c_1\not\in N_{B_L^f}(\{f^{-1}(c_1),f^{-2}(c_2),\ldots,f^{-1}(c_p)\})$, each $f^{-1}(c_i)$ must contain a vertex whose list does not contain $c_1$. Since $c_1$ is globally frequent for $L$, the number of such vertices is at most $|V(\Sigma)|-(k+1)$, which is $k$ by Corollary \ref{exactorder}.
Thus $p\le k$, i.e., $|S|\le k$. This contradicts (\ref{Slargerk}).

\noindent\emph{Case} 2. $c_1$ is  frequent among singletons  for $L$.

Let $\{v_1\},\{v_2\},\ldots,\{v_{\gamma_L}\}$ be $\gamma_L$ singletons such that $c_1\in L(v_i)$ for each $i$. As $f$ is proper, each singleton $\{v_i\}$ with $i\in \{1,2,\ldots,\gamma_L\}$ is a color class of $f$, that is, $\{v_i\}\in V_f$. If $\{v_i\}\in S$ for some $i\in \{1,2,\ldots,\gamma_L\}$ then
we have $N_{B_L^f}(S)\supseteq N_{B_L^f}(\{v_i\})=L(v_i)\ni c_1$, a contradiction. Thus  $V_f\setminus S$ contains at least $\gamma_L$ singletons $\{v_1\},\{v_2\},\ldots,\{v_{\gamma_L}\}$. Now, by Lemma \ref{manysingL}, $\Sigma$ is $L$-colorable. This is a contradiction.

In either case, we have a contradiction. The proof is complete.
\end{proof}
\subsection{Upper bound on the number of frequent colors for $L\in \mathcal{L}$}
\begin{lem}\label{basicupper2k}
For each $L\in \mathcal{L}$, $C_L$ contains at most $2(k-1)$ frequent colors.
\end{lem}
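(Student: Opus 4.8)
The plan is to argue by contradiction. Suppose $C_L$ contains at least $2k-1=2(k-1)+1$ frequent colors. I would produce a near $L$-coloring of $\Sigma$, which contradicts Proposition \ref{nonear}. Concretely, the target is a proper coloring $f\colon V(\Sigma)\to C_L$ in which every vertex is either colored from its own list or forms a singleton color class carrying a frequent color. The crucial structural fact I would exploit first is that, since $|\textup{abs}(C_L)|=|V(\Sigma)|-\gamma_L$ with $\gamma_L>0$ and $|V(\Sigma)|=2k+1$ by Corollary \ref{exactorder}, every proper coloring of $\Sigma$ must merge at least $\gamma_L$ vertices within parts (distinct color classes necessarily have distinct absolute values, so color classes live inside single parts and there are exactly $|\textup{abs}(C_L)|$ of them). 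Thus any candidate near $L$-coloring must realize at least $\gamma_L$ within-part merges, and each such merged class, having size at least two, must be list-colored (a common color lying in all its members' lists), while the surviving singleton classes may be colored off-list provided the color used is frequent.

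Next I would set this up as a good-matching problem to which Theorem \ref{signHall} applies. Starting from the representative subset $C$ and injection $h$ of Corollary \ref{resrepsat}, which already list-colors $|\textup{abs}(C_L)|$ vertices with pairwise distinct absolute values, the $\gamma_L$ surplus vertices outside the image of $h$ are exactly what must be absorbed by merges or by frequent-colored singletons. I would then form a bipartite graph between the color classes of a chosen partition (each multi-vertex class joined to the common part of its lists, each singleton $\{v\}$ joined to $\textup{abs}(L(v))$ together with the absolute values of the frequent colors available to it) and seek a good matching saturating all classes. Since $2k-1$ frequent colors occupy at least $k$ distinct absolute values, each singleton class acquires a large neighborhood; the globally frequent colors (present in all but at most $k$ lists, as $k+1=|V(\Sigma)|-k$) and the colors frequent among singletons (present in at least $\gamma_L$ singleton lists, matching the defect exactly) are precisely calibrated to force Hall's condition $|\textup{abs}(N(S))|\ge|S|$. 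I expect to split into two cases according to whether the abundance of frequent colors comes from the globally frequent ones or from those frequent among singletons, mirroring the two cases of Proposition \ref{nonear}, and in the singleton case I would invoke the mechanism of Lemma \ref{manysingL}.

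The main obstacle, and where the threshold $2(k-1)$ rather than $2(k-1)+1$ is actually used, is the simultaneous bookkeeping of three constraints: the $\gamma_L$ within-part merges must be chosen so that the merged classes are genuinely list-colorable, the off-list colors must land only on singleton classes whose color is frequent, and all chosen absolute values must remain distinct across classes with no forbidden $c,-c$ pair inside any part. Verifying Hall's condition for the residual list-coloring after the frequent singletons have been committed is the delicate step, since one must show that removing up to $k$ "blocked" vertices per globally frequent color (or $\gamma_L$ singletons per singleton-frequent color) still leaves enough room; this is exactly the arithmetic that breaks once the count of frequent colors reaches $2k-1$, yielding the near $L$-coloring and the desired contradiction.
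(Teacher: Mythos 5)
Your opening moves are sound: the contradiction setup, the observation that any proper coloring into $C_L$ has color classes with pairwise distinct absolute values (hence confined to single parts, hence at least $\gamma_L$ forced merges), and the rule that only singleton classes may be off-list and then only with frequent colors are all correct, and your target (a near $L$-coloring contradicting Proposition \ref{nonear}) is exactly the paper's target. But the proposal stops where the proof has to begin. Your plan is to fix a partition of $V(\Sigma)$ into prospective color classes and then apply Theorem \ref{signHall} to the bipartite graph whose left side is that partition; however, Theorem \ref{signHall} only decides whether a \emph{given} bipartite graph has a good matching, while here the left side is itself the unknown: deciding which vertices to merge (subject to every merged class having a common color in all its members' lists) is precisely the difficulty, and Hall's condition will fail for most choices of partition. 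You give no rule for making this choice, and you never verify Hall's condition for any choice; the sentence that the bookkeeping ``is exactly the arithmetic that breaks once the count of frequent colors reaches $2k-1$'' asserts the conclusion rather than proving it. The appeal to Lemma \ref{manysingL} is also circular at this stage: that lemma takes as input a surjective weak $L$-coloring $f$ and the graph $B_L^f$ built from it, and a weak $L$-coloring is exactly what you do not yet have.

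What fills this gap in the paper is a three-phase greedy construction with maximality arguments in place of a one-shot matching. Fix a set $F$ of $k$ frequent colors with distinct absolute values (possible since $2k-1$ colors occupy at least $k$ absolute values). Phase one: choose $V_1\subseteq V(\Sigma)$ and an $L_{V_1}$-coloring into $C_L\setminus F^{\pm}$ with $|V_1|$ maximal and, subject to that, meeting as many parts as possible. Phase two: color whole leftover parts $R_{P_i}$, in decreasing order of size, each with a single color of $F^{\pm}$ lying in all relevant lists and of fresh absolute value. Phase three: give the remaining vertices distinct colors of $F$ unused in absolute value; these become the off-list singleton classes permitted by (b$'$). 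The hypothesis of at least $2k-1$ frequent colors enters through the counting that makes this terminate: if phase two stalls at $R_{P_{i+1}}$, the maximality of $V_1$ (the paper's Facts 1 and 2, together with Claim 1 on parts of size 2) forces $|V_1|=k-i$, then $i=0$, then $|V_1|=k$, $R_{P_1}=P_1$ with $|P_1|\ge 3$, and a final double count over the lists of $P_1$ produces a frequent color lying in two of them, which rescues the construction and yields the forbidden near $L$-coloring. None of this counting, nor any substitute for it, appears in your proposal, so as it stands it is a restatement of the goal rather than a proof.
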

\begin{proof}
Suppose to the contrary that there is an $L\in \mathcal{L} $ for which $C_L$ contains at least $2k-1$ frequent colors.  Let $F$ be a set of $k$ frequent colors with different absolute values. We shall construct a near $L$-coloring, which contradicts Proposition \ref{nonear} and hence completes the proof.

We construct a near $L$-coloring by a three-phase greedy procedure. In the first phase, choose a subset $V_1$ and an $L_{V_1}$-coloring $f_1\colon\,V_1\mapsto C_L\setminus F^{\pm}$ such that $V_1$ contains as many vertices as possible, and subject to this, $V_1$ contains vertices from as many parts as possible.

\noindent\textbf{Claim 1.} \emph{ Every part of size 2 contains a vertex of $V_1$.}
\begin{proof}[Proof of the claim]  Suppose to the contrary that there is a part $P=\{u,v\}$ such that $P\cap V_1=\emptyset$. By Corollary \ref{nonsingledisjoint}, we have $L(u)\cap L(v)=\emptyset$. Now Corollary \ref{2disjoint} implies that $L(u)\cup L(v)=C_L$, $|C_L|=4k$ and $|\textup{abs}(C_L)|=2k$. Thus $C_L$ is a \emph{symmetric} set, i.e., $C=-C$. Thus $C_L\setminus F^{\pm}$ is a symmetric set consisting of $k$ pairs of opposite colors. As $f_1(V_1)\subseteq C_L\setminus F^{\pm}$, we have $(f_1(V_1))^{\pm}\subseteq C_L\setminus F^{\pm}$. If $(f_1(V_1))^{\pm}\neq C_L\setminus F^{\pm}$ then we can use a color $c\in C_L\setminus F^{\pm}$ to color $u$ or $v$, increasing the size of $V_1$ and hence contradicting maximality of $|V_1|$. Therefore $(f_1(V_1))^{\pm}= C_L\setminus F^{\pm}$. As $f$ is proper, all colors in $f(V_1)$ must have different absolute values. This implies that $|f_1(V_1)|=k$ and hence $|V_1|\ge k$ with equality holding if and only $f_1$ is injective.

If $|V_1|\ge k+1$ then $|V(\Sigma)\setminus V_1|\le k$ and we can extend $f_1$ to obtain a near $L$-coloring by mapping the vertices in $V(\Sigma)\setminus V_1$ injectively to $F$. This is a contradiction to Proposition \ref{nonear}.

Now consider the case that $|V_1|=k$ and hence $f_1$ is injective. Since neither vertex of $P$ is in $V_1$ and $\Sigma$ has exactly $k$ parts, there must be a part $Q$ containing at least two vertices of $V_1$, say $x$ and $y$. However, since $L(u)\cup L(v)=C_L$, we can uncolor $x$ and use its color to color one of $u$ and $v$. This maintains the number of colored vertices but increases the number of parts with a colored vertex. This contradicts our assumption on $V_1$. Claim 1 follows as we obtain a contradiction in either case.
\end{proof}

For each part $P$, let $R_P=P\setminus V_1$, the set of vertices that are not colored by $f_1$. Label the parts of $\Sigma$ as $P_1$,$P_2$,\ldots,$P_k$ so that $|R_{P_1}|\ge {R_{P_2}}\ge\cdots\ge|R_{P_k}|$. The second phase of our coloring procedure is described as follows. For each part $R_{P_i}$, in turn, we try to color $R_{P_i}$ with a color,  say $c_i\in F^{\pm}$ such that neither $c_i$ nor $-c_i$ has yet been used and $c_i$ is available for all vertices in $R_{P_i}$. We stop when either $i=k$, or arrive at a part $R_{P_{i+1}}$ for which we fail to color $R_{P_{i+1}}$. If $i=k$ then all vertices have been colored and we have obtained an $L$-coloring, a contradiction. Thus, $i<k$. Let $U=F^{\pm}\setminus\{\pm c_1,\pm c_2,\dots,\pm c_i\}$.  We observe that $|U|=2k-2i$ and each color in $U$ is absent from $L(v)$ for at least one $v\in R_{P_{i+1}}$.

Define $V_2=R_{P_1}\cup R_{P_2}\cup\cdots \cup R_{P_i}$ and $V_3=R_{P_{i+1}}\cup R_{P_{i+2}}\cup\cdots\cup R_{P_k}$. Note that $|U\cap F|=k-i$. If $|V_3|\le k-i$ then in the third phase we simply map $V_3$ injectively into $U\cap F$, which gives a near $L$-coloring,  a contradiction. Thus $|V_3|\ge k-i+1$, implying
\begin{equation}\label{RPim1}
|R_{P_{i+1}}|\ge 2
\end{equation} by our choice of ordering. Now, $|V_2|\ge |R_{P_i}|i\ge 2i$. As $(V_1,V_2,V_3)$ is clearly a partition of $V(\Sigma)$ and $|V(\Sigma)|=2k+1$, we have
\begin{equation}\label{V1smaller}
|V_1|=(2k+1)-|V_2|-|V_3|\le (2k+1)-2i-(k-i+1)=k-i.
\end{equation}
Let us show that the inequality in (\ref{V1smaller}) is indeed an equality, that is
\begin{equation}\label{V1kmi}
|V_1|=k-i.
\end{equation}
 Note that $U$ contains $(2k-2i)$ colors from $F^{\pm}$ and each of these color is absent from $L(v)$ for at least one $v\in R_{P_{i+1}}$. Since all lists of $R_{P_{i+1}}$ have size at least $2k$ colors and $|F^{\pm}|=2k$, these absences imply that the colors of $C_L\setminus F^{\pm}$ must appear at least $(2k-2i)$ times (in total) among these lists. For each color $c\in C_L^{\pm}$, we use $n(c)$ to denote the number of lists of $R_{P_{i+1}}$ which contain $c$. Then we have
\begin{equation}\label{addnc}
\sum\limits_ {c\in C_L\setminus F^{\pm}} n(c)\ge 2k-2i,
\end{equation}
or equivalently,
 \begin{equation}\label{addncnmc}
 \sum_{\substack{c>0\\  c\in C_L^{\pm}\setminus F^{\pm}}} n(c)+n(-c)\ge 2k-2i.
 \end{equation}

 Let $c$ be any color with $n(c)>0$. If $f_1^{-1}(c)\cap P_{i+1}\neq\emptyset$ then in the first phase we could use $c$ to color $n(c)$ vertices of $R_{P_{i+1}}$, thereby increasing the size of $V_1$, a contradiction. Thus, we have established the following fact.

\noindent\textbf{Fact 1}. \emph{ If $n(c)>0$ then $f_1^{-1}(c)\cap P_{i+1}=\emptyset$, i.e., $c$ was not used to color any vertex of $P_{i+1}$ in the first phase.}

Let $\{c,-c\}$ be any pair of opposite colors with $n(c)+n(-c)>0$. Without loss of generality, we may assume that $n(c)\ge n(-c)$. If $|f_1^{-1}(\pm c)|<n(c)$ then we could uncolor $f_1^{-1}(\pm c)$ and use $c$ to color $n(c)$ vertices of $R_{P_{i+1}}$, again contradicting the maximality of $|V_1|$. Thus, we have proved

\noindent\textbf{Fact 2}. \emph{For any color $c\in C_L^{\pm}$, $|f_1^{-1}(\pm c)|\ge \max\{n(c),n(-c)\}$, i.e.,  at least $\max\{n(c),n(-c)\}$ vertices are colored with $\pm c$ in the first phase.}

From Fact 2 and Inequality (\ref{addncnmc}), we have

\begin{eqnarray}
|V_1|&\ge &\sum_{\substack{c>0\\  c\in C_L^{\pm}\setminus F^{\pm}}} \max\{n(c),n(-c)\} \label{V1one}\\
& \ge &\sum_{\substack{c>0\\  c\in C_L^{\pm}\setminus F^{\pm}}} \frac{n(c)+n(-c)}{2} \label{V1two}\\
& \ge  & k-i.\label{V1three}
 \end{eqnarray}
This, together with (\ref{V1smaller}), proves that $|V_1|=k-i$. Furthermore, all equalities  must hold in (\ref{V1one})-(\ref{V1three}). Thus,  $n(c)=n(-c)$ for any $c\in C_L^{\pm}\setminus F^{\pm}$, and hence
\begin{equation}\label{V1equal}
|V_1|=\sum_{\substack{c>0,~n(c)>0\\  c\in C_L^{\pm}\setminus F^{\pm}}} n(c).
\end{equation}
For any $c\in C_L^{\pm}\setminus F^{\pm}$ with $n(c)>0$, we have $n(-c)=n(c)>0$ and hence by Fact 1, $f_1^{-1}(\pm c)\cap P_{i+1}=\emptyset$. This, together with Fact 2, leads to
$$
|(V_1\setminus P_{i+1})\cap f^{-1}(\pm c)|\ge n(c)
$$
and hence
\begin{equation}\label{V1mlarger}
|V_1\setminus P_{i+1}|\ge\sum_{\substack{c>0,~n(c)>0\\  c\in C_L^{\pm}\setminus F^{\pm}}} n(c).
\end{equation}
From (\ref{V1equal}) and (\ref{V1mlarger}), we have $V_1\cap P_{i+1}=\emptyset$ and hence $|P_{i+1}|\neq 2$ by Claim 1. Now $R_{P_{i+1}}=P_{i+1}$ and hence $|P_{i+1}|\ge 2$ by (\ref{RPim1}). Thus $|R_{P_{i+1}}|=|P_{i+1}|\ge 3$ and hence $|R_{P_{i}}|\ge 3$ by our choice of ordering. Consequently, $|V_2|\ge 3i$ and a similar reasoning of  (\ref{V1smaller}) leads to $|V_1|\le k-2i$. Thus, by (\ref{V1kmi}), we  have $k-i\le k-2i$ and hence $i=0$.

 Therefore, we have $|V_1|=k$, $|V_2|=0$, $R_{P_1}=P_1$ and $|P_1|\ge 3$.  As we have proved that equality holds in (\ref{V1three}), the equivalence between (\ref{V1three}) and (\ref{addnc}) means
 \begin{equation}\label{addnc2k}
 \sum\limits_ {c\in C_L\setminus F^{\pm}} n(c)= 2k-2i=2k.
 \end{equation}
 As $P_1$ is a nonsingleton part of $\Sigma$, Proposition \ref{nonsingledisjoint} implies that $\cap_{v\in P_1}L(v)=\emptyset$. Note that $R_{P_1}=P_1$. Thus, for any color $c\in F^{-}$, we have $n(c)\le |P_1|-1$ and hence
 \begin{equation}\label{addncFm}
 \sum_{c\in F^{-}}n(c)\le k(|P_1|-1).
 \end{equation}
 As $|L(v)|\ge 2k$ for each $v\in P_1$, we obtain, using double counting,
 \begin{equation}\label{addncCL}
 \sum_{c\in C_L}n(c)=\sum_{v\in P_1}|L(v)|\ge 2k|P_1|.
 \end{equation}
 It follows from (\ref{addnc2k})-(\ref{addncCL}) that
 \begin{eqnarray}
\sum_{c\in F}n(c)
&=&\sum_{c\in F^{\pm}}n(c) -\sum_{c\in F^{-}}n(c) \\
& = &\sum_{c\in C_L}n(c)-\sum_{c\in C_L\setminus F^{\pm}}n(c)-\sum_{c\in F^{-}}n(c)\\
&\ge & 2k|P_1|-2k-k(|P_1|-1)\\
&=&k(|P_1|-1)\\
&\ge &2k.
 \end{eqnarray}
 Therefore, there is a color $c\in F$ with $n(c)\ge 2$. Let $u$ and $v$ be two vertices in $P_1$ such that $c\in L(u)\cap L(v)$. Note that $|V_3|=k+1$ and hence $|V_3\setminus\{u,v\}|=|F\setminus \{c\}|$.  Let $f_2$ be any bijection from $V_3\setminus\{u,v\}$ to $F\setminus \{c\}$. In the third phase, we color $u$ and $v$ with $c$ and map the vertices of $V_1\setminus\{u,v\}$ to $F\setminus \{c\}$ by $f_2$ to give a near $L$-coloring of $\Sigma$. This completes the proof of this lemma.
 \end{proof}
 Now we can obtain a basic inequality between $\Sigma_L$ and $\xi$, where $\Sigma$ is defined in (5), and $\xi$ is the number of singletons in $\Sigma$.
 \begin{cor} \label{xigam}
 $\xi\ge \gamma_L$.
 \end{cor}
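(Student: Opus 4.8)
The plan is to argue by contradiction, assuming $\xi<\gamma_L$ (equivalently $\xi\le\gamma_L-1$), and to deduce that $\Sigma$ admits a near $L$-coloring, contradicting Proposition \ref{nonear}. The first and cleanest step exploits the hypothesis directly: a color that is frequent among singletons must occur in the lists of at least $\gamma_L$ singletons, while $\Sigma$ has only $\xi<\gamma_L$ singletons in total, so no color can be frequent among singletons. Hence ``frequent'' collapses to ``globally frequent'', and by Lemma \ref{basicupper2k} there are at most $2(k-1)$ globally frequent colors for $L$.

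Next I would set up a skeleton coloring from the representative subset. By Corollary \ref{resrepsat} there are a representative subset $C$ of $C_L$ with $|C|=|\textup{abs}(C_L)|=|V(\Sigma)|-\gamma_L$ and an injection $h\colon C\to V(\Sigma)$ with $c\in L(h(c))$; the inverse of $h$ colors the $|V(\Sigma)|-\gamma_L$ vertices of $h(C)$ properly with $L$-colors of pairwise distinct absolute values, leaving exactly $\gamma_L$ vertices uncolored. Since $\gamma_L>\xi$ and at most $\xi$ uncolored vertices can be singletons, at least $\gamma_L-\xi\ge 1$ uncolored vertices lie in nonsingleton parts. The idea is to repair this partial coloring into a near $L$-coloring: color the uncolored vertices either by genuine list colors, borrowing room created inside nonsingleton parts (where $\bigcap_{v\in P}L(v)=\emptyset$ by Corollary \ref{nonsingledisjoint}, so the vertices of a part need not all compete for the same colors), or, as a last resort, by assigning a vertex its own singleton color class using a globally frequent color, which is exactly what condition (b$'$) of a near $L$-coloring permits.

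To carry out the repair systematically I would run a greedy procedure in the spirit of the three-phase argument proving Lemma \ref{basicupper2k}: maximize the set of vertices receiving genuine list colors (optimizing first the number of colored vertices and, subject to this, the number of parts met), and reserve the at most $2(k-1)$ frequent colors to absorb the residual vertices as singleton classes. The accounting identities available here, namely $|V(\Sigma)|=2k+1$ (Corollary \ref{exactorder}) and $\sum_{|P|\ge 2}(|P|-1)=k+1$, together with the definition $\gamma_L=|V(\Sigma)|-|\textup{abs}(C_L)|$, should force the residual set to be small enough to be absorbed, yielding a near $L$-coloring and hence the desired contradiction.

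The main obstacle is precisely this final counting. Unlike Lemma \ref{basicupper2k}, where one starts from $2k-1$ frequent colors and can extract a reservoir of $k$ frequent colors of distinct absolute value, here at most $2(k-1)$ frequent colors are available, so the reservoir may be too small for a naive greedy to absorb the residual vertices. The delicate point will be to show that the \emph{failure} of the greedy produces, as in Facts 1 and 2 of Lemma \ref{basicupper2k}, tight equalities relating $|V_1|$, $\gamma_L$ and $\xi$, and that these equalities are incompatible with $\xi<\gamma_L$. I expect this bookkeeping to mirror the equality analysis in (\ref{V1one})--(\ref{V1three}) and the subsequent use of Corollaries \ref{nonsingledisjoint} and \ref{2disjoint}, with the inequality $\xi<\gamma_L$ supplying the extra deficit that makes the counts collide.
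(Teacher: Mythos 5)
Your proposal has a genuine gap, and in fact two. First, the step you yourself flag as ``the main obstacle'' --- showing that the failure of the greedy repair forces equalities incompatible with $\xi<\gamma_L$ --- is precisely the entire content of the statement, and it is left as an expectation (``I expect this bookkeeping to mirror\dots'') rather than carried out; nothing in the proposal certifies that the residual vertices can be absorbed or that the counts ``collide.'' Second, the repair strategy as described cannot even get started: by Corollary \ref{resrepsat} the skeleton coloring $h^{-1}$ is a bijection from $h(C)$ onto a representative subset $C$, so it already uses \emph{every} absolute value in $\textup{abs}(C_L)$. Since any two vertices in different parts must receive colors of distinct absolute values, an uncolored vertex can never be given a fresh color class --- frequent or not --- without first uncoloring or recoloring skeleton vertices; the only ``free'' extensions are into existing classes inside its own part, which Corollary \ref{nonsingledisjoint} obstructs. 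So the greedy absorption into $2(k-1)$ reserved frequent colors, as written, is not available, and the argument would have to be rebuilt around a Hall-type restructuring of the skeleton (which is what Lemma \ref{manysingL} and Proposition \ref{nonear} already encapsulate).

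The paper's actual proof is much shorter and of a different nature: no coloring is constructed at all. One double counts the incidences of the bipartite graph $B_L$. Each color that is not globally frequent lies in at most $k$ lists; each globally frequent color lies in at most $(2k+1)-(k-\xi)=k+\xi+1$ lists, because by Corollary \ref{nonsingledisjoint} every nonsingleton part misses every color in at least one of its lists; and $\sum_{v}|L(v)|\ge 2k(2k+1)$ by Corollary \ref{exactorder}. Combining these with $|C_L|\le 2\,|\textup{abs}(C_L)|=2(2k+1-\gamma_L)$ yields
\begin{equation*}
|F'|\ \ge\ \frac{2k\gamma_L}{\xi+1},
\end{equation*}
where $F'$ is the set of globally frequent colors, and Lemma \ref{basicupper2k} gives $|F'|\le 2(k-1)<2k$, whence $\gamma_L<\xi+1$, i.e.\ $\xi\ge\gamma_L$. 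Note that this argument is direct (no contradiction hypothesis $\xi<\gamma_L$ is needed) and uses Proposition \ref{nonear} only through Lemma \ref{basicupper2k}, which is already proved; your first observation (that $\xi<\gamma_L$ would make ``frequent'' collapse to ``globally frequent'') is correct but turns out to be unnecessary.
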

 \begin{proof}
 Let $F'$ denote the set of globally frequent colors for $L$. Then each color in $C_L\setminus F'$ appears in at most $k$ lists of $\Sigma$. By Corollary \ref{exactorder}, $|V(\Sigma)|=2k+1$. As there are exactly $k-\xi$ nonsingleton parts, Corollary \ref{nonsingledisjoint} implies that each color $c\in F'$ appears in at most $(2k+1)-(k-\xi)$  lists. Therefore, we have
 \begin{eqnarray}
\sum_{c\in C_L}|N_{B_L}(c)|&=&\sum_{c\in C_L\setminus F'}|N_{B_L}(c)|+\sum_{c\in  F'}|N_{B_L}(c)|\nonumber\\
&\le &k(|C_L|-|F'|)+((2k+1)-(k-\xi))|F'| \nonumber\\
& = &k|C_L|+(\xi+1)|F'|\nonumber
 \end{eqnarray}
 On the other hand, using double counting and the fact that $|L(v)|\ge 2k$, we obtain
 \begin{equation}
 \sum_{c\in C_L}|N_{B_L}(c)|= \sum_{v\in V(\Sigma)}|L(v)|\ge 2k|V(\Sigma)|\nonumber
 \end{equation}
 Combining the above two inequalities leads to
 \begin{equation}\label{Fp}
 |F'|\ge \frac{2k(|V(\Sigma)|-|\textup{abs}(C_L)|)}{\xi+1}=\frac{2k\gamma_L}{\xi+1}.
 \end{equation}
 By Lemma \ref{basicupper2k}, $|F'|\le 2(k-1)<2k$. Thus $\xi+1>\gamma_L$ that is, $\xi\ge \gamma_L$ as $\xi$ is an integer.
 \end{proof}
 The following corollary is immediate from Proposition \ref{xigam} and Definition \ref{deffre}.
 \begin{cor}\label{allsinfre}
 If a color $c\in C_L$ appears in the list of every singleton then $c$ is frequent.
 \end{cor}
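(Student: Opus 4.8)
The plan is to derive this directly from the definition of \emph{frequent among singletons} together with the inequality $\xi\ge\gamma_L$ established in Corollary \ref{xigam}. First I would observe that if a color $c\in C_L$ lies in the list of every singleton, then $c$ appears in the lists of exactly $\xi$ singletons, since $\xi$ is by definition the total number of singletons in $\Sigma$. The only remaining task is to compare this count against the threshold $\gamma_L$ that appears in the definition of frequency among singletons.

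Next I would invoke Corollary \ref{xigam}, which asserts $\xi\ge\gamma_L$. Combining this with the previous observation shows that $c$ appears in the lists of at least $\gamma_L$ singletons. By the second bullet of Definition \ref{deffre} this is precisely the condition for $c$ to be \emph{frequent among singletons}, and hence, by the third bullet of the same definition, $c$ is \emph{frequent}. This closes the argument.

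I do not anticipate a genuine obstacle here: the statement is an immediate consequence of the two ingredients just named, and no auxiliary coloring argument or case analysis is required. The only point demanding a moment's care is to keep the inequality in the correct direction---we need $\xi\ge\gamma_L$ rather than the reverse, so that ``$c$ appears in all $\xi$ singletons'' is enough to clear the bar of ``$c$ appears in at least $\gamma_L$ singletons.'' Since Corollary \ref{xigam} supplies exactly this inequality, the deduction is a single short step.
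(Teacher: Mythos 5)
Your proof is correct and is exactly the paper's argument: the paper states this corollary as immediate from Corollary \ref{xigam} ($\xi\ge\gamma_L$) and Definition \ref{deffre}, which is precisely the one-step deduction you give. Nothing further is needed.
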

 \begin{cor}\label{no2part}
 $\Sigma$ contains no part of size 2.
 \end{cor}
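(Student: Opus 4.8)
The plan is to argue by contradiction: assume $\Sigma$ has a part $P=\{u,v\}$ of size $2$ and show that this forces $C_L$ to contain far too many frequent colors. The first moves are purely structural. By Corollary \ref{nonsingledisjoint}, $P$ being a nonsingleton part gives $L(u)\cap L(v)=\emptyset$, and then Corollary \ref{2disjoint} upgrades this to $|\textup{abs}(C_L)|=2k$. Since Corollary \ref{exactorder} already fixes $|V(\Sigma)|=2k+1$, I would immediately read off
\begin{equation}
\gamma_L=|V(\Sigma)|-|\textup{abs}(C_L)|=(2k+1)-2k=1.
\end{equation}

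The crux is that $\gamma_L=1$ trivializes the notion of being \emph{frequent among singletons}: by definition a color is frequent among singletons exactly when it lies in the list of at least $\gamma_L=1$ singleton, so \emph{any} color appearing in the list of \emph{any} single singleton is automatically frequent. Now Corollary \ref{xigam} gives $\xi\ge\gamma_L=1$, so $\Sigma$ has at least one singleton $\{w\}$. Each of the (at least) $2k$ colors in $L(w)$ therefore occurs in a singleton's list and is hence frequent, so $C_L$ contains at least $2k$ frequent colors.

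This contradicts Lemma \ref{basicupper2k}, which caps the number of frequent colors in $C_L$ at $2(k-1)=2k-2<2k$. The contradiction completes the argument, so $\Sigma$ has no part of size $2$.

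I expect the only point requiring care is the precise reading of ``frequent among singletons'': once $\gamma_L=1$ is in hand, the threshold $\gamma_L$ drops to $1$ and every color in a singleton list becomes frequent, at which point the clash with the $2(k-1)$ bound is immediate. There is essentially no computational obstacle here—the difficulty has been front-loaded into Corollaries \ref{2disjoint}, \ref{exactorder}, \ref{xigam} and Lemma \ref{basicupper2k}, and this corollary simply combines them.
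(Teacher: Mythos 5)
Your proof is correct and follows essentially the same route as the paper's: both derive $L(u)\cap L(v)=\emptyset$ from Corollary \ref{nonsingledisjoint}, get $|\textup{abs}(C_L)|=2k$ from Corollary \ref{2disjoint} (hence $\gamma_L=1$ via Corollary \ref{exactorder}), invoke Corollary \ref{xigam} to obtain a singleton, observe that all $2k$ colors in its list are frequent among singletons, and contradict the $2(k-1)$ bound of Lemma \ref{basicupper2k}. No gaps; your explicit remark that $\gamma_L=1$ trivializes the ``frequent among singletons'' threshold is exactly the paper's (unstated but implied) key observation.
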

 \begin{proof}
 Suppose to the contrary that $P$ is a part of size 2. Label the two vertices in $P$ as $u$ and $v$. By Corollaries \ref{nonsingledisjoint} and \ref{2disjoint}, $|\textup{abs}(C_L)|=2k$ and hence $\gamma_L=1$. By Corollary \ref{xigam}, $\xi\ge 1$, that is $\Sigma$ contains at least one singleton. Let $v$ be a singleton of $\Sigma$. Each color in $L(v)$ is clearly frequent among singletons for $L$ since $\gamma_L=1$. Thus $L$ has at least $2k$ frequent colors. This contradicts Lemma \ref{basicupper2k} and hence completes the proof.
 \end{proof}
 \begin{cor}\label{xikm2}
 $\xi\ge \frac{k-1}{2}.$
 \end{cor}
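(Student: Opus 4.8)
The plan is to prove the bound by a direct vertex count over the parts of $\Sigma$, exploiting the two structural facts already established for our minimal counterexample: that $|V(\Sigma)|$ equals exactly $2k+1$ (Corollary \ref{exactorder}) and that $\Sigma$ has no part of size $2$ (Corollary \ref{no2part}).

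First I would partition the $k$ parts of the complete $k$-partite graph $G$ underlying $\Sigma$ into the $\xi$ singletons and the remaining $k-\xi$ nonsingleton parts. By Corollary \ref{no2part} each nonsingleton part avoids size $2$, and since a part is nonempty and not a singleton, every nonsingleton part has size at least $3$. Summing the sizes of all parts and invoking Corollary \ref{exactorder} then yields
\begin{equation}
2k+1=|V(\Sigma)|=\xi+\sum_{\text{nonsingleton }P}|P|\ge \xi+3(k-\xi)=3k-2\xi.
\end{equation}
Rearranging gives $2\xi\ge 3k-(2k+1)=k-1$, which is precisely $\xi\ge (k-1)/2$, as required.

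Since the argument reduces to a single counting inequality, there is no genuine obstacle here; the only points that require care are the use of Corollary \ref{no2part} to exclude size-$2$ parts (this is exactly what upgrades the trivial lower bound of $1$ to a lower bound of $3$ on each nonsingleton part and thereby makes the count nontrivial), and the use of the \emph{exact} equality $|V(\Sigma)|=2k+1$ from Corollary \ref{exactorder} rather than the a priori inequality $|V(\Sigma)|\le 2k+1$.
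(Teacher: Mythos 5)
Your proof is correct and is essentially the paper's own argument: both count vertices using Corollary \ref{no2part} to force every nonsingleton part to have size at least $3$, giving $3(k-\xi)+\xi\le |V(\Sigma)|\le 2k+1$ and hence $\xi\ge\frac{k-1}{2}$. One small remark: contrary to your closing comment, the exact equality from Corollary \ref{exactorder} is not actually needed here, since the counting only requires the upper bound $|V(\Sigma)|\le 2k+1$, which already holds a priori.
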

 \begin{proof}
 By Corollary \ref{no2part}, $\Sigma$ contains no part of size 2. Note that $G$ has $\xi$ singletons and $k-\xi$ nonsingleton parts.  As $|V(\Sigma)|=2k+1$ by Corollary \ref{exactorder}, we have
 $3(k-\xi)+\xi\le 2k+1$, which implies $\xi\ge \frac{k-1}{2}$.
 \end{proof}

 We say $L\in \mathcal{L}$ is a \emph{maximal} list assignment if for any $v\in V(\Sigma)$ with $C_L\setminus L(v)\neq \emptyset$ and any $c\in C_L\setminus L(v)$, there is an $L^*$-coloring of $\Sigma$, where $L^*$ is defined by
  \begin{equation}\label{inlarge}
  L^*(u)=\begin{cases}
 L(v)\cup\{c\} & \text{if~}u=v,\\
L(u) & \text{if~} u\in V(\Sigma)\setminus\{v\}.
\end{cases}
  \end{equation}
We use $\mathcal{L}_{\max}$ to denote the set of all maximal list assignments. We note that $\mathcal{L}_{\max}$ is a nonempty subset of $\mathcal{L}$.
\begin{lem}\label{keepCLfre}
For each $L\in \mathcal{L}$, there is an $L'\in \mathcal{L}_{\max}$ such that $L(v)\subseteq L'(v)$ and $C_L=C_{L'}$. In particular, each color frequent for $L$ is also frequent for $L'$.
\end{lem}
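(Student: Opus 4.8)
The plan is to obtain $L'$ as a maximal element of a suitable finite poset of list assignments that enlarge $L$ without enlarging $C_L$. Concretely, I would consider the collection $\mathcal{F}$ of all zero-free list assignments $L''$ satisfying: (i) $L(v)\subseteq L''(v)$ for every $v\in V(\Sigma)$; (ii) $L''(v)\subseteq C_L$ for every $v\in V(\Sigma)$; and (iii) $\Sigma$ is not $L''$-colorable. The decisive design choice is constraint (ii): lists are only ever enlarged by colors already present in $C_L$, which is exactly the kind of enlargement the definition of $\mathcal{L}_{\max}$ (see (\ref{inlarge})) concerns. Since every $L''(v)$ is then a subset of the fixed finite set $C_L$, the collection $\mathcal{F}$ is finite; and it is nonempty because $L\in\mathcal{F}$. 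Ordering $\mathcal{F}$ by coordinate-wise inclusion, I would let $L'$ be any maximal element.

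First I would record the easy properties of $L'$. From (i) we get $|L'(v)|\ge|L(v)|\ge 2k$ and $L(v)\subseteq L'(v)$; together with (iii) this shows $L'\in\mathcal{L}$. Combining (i) and (ii) gives $C_L\subseteq C_{L'}\subseteq C_L$, hence $C_{L'}=C_L$, as claimed.

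The main step is to verify $L'\in\mathcal{L}_{\max}$, and here the maximality of $L'$ in $\mathcal{F}$ does the work. Fix any $v$ with $C_{L'}\setminus L'(v)\neq\emptyset$ and any $c\in C_{L'}\setminus L'(v)=C_L\setminus L'(v)$, and let $L^*$ be as in (\ref{inlarge}), so that $L^*(v)=L'(v)\cup\{c\}$ and $L^*(u)=L'(u)$ for $u\neq v$. Suppose for contradiction that $\Sigma$ is not $L^*$-colorable. Because $c\in C_L$, the assignment $L^*$ still satisfies (i) and (ii), and by assumption it satisfies (iii), so $L^*\in\mathcal{F}$. But $L^*$ strictly dominates $L'$ in $\mathcal{F}$ (they agree except that $L^*(v)\supsetneq L'(v)$), contradicting the maximality of $L'$. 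Hence $\Sigma$ is $L^*$-colorable, which is precisely the defining property of $\mathcal{L}_{\max}$.

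Finally, the ``in particular'' clause follows at once. Since $C_{L'}=C_L$ we have $\textup{abs}(C_{L'})=\textup{abs}(C_L)$ and therefore $\gamma_{L'}=\gamma_L$. Because $L(v)\subseteq L'(v)$ for every $v$, passing from $L$ to $L'$ can only enlarge, for each color $c$, the set of vertices (and in particular the set of singletons) whose list contains $c$. Thus a color that is globally frequent, or frequent among singletons, for $L$ remains so for $L'$, so every frequent color for $L$ is frequent for $L'$. The only genuinely delicate point in the whole argument is constraint (ii): dropping it would still yield a terminating enlargement process, but would forfeit the equality $C_{L'}=C_L$ that links the frequency notions for $L$ and $L'$.
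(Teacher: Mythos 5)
Your proposal is correct and is essentially the paper's own argument: the paper builds $L'$ by repeatedly adding a color $c\in C_L$ to some list while $\Sigma$ stays non-colorable, terminating because lists are confined to the finite set $C_L$, which is exactly your maximal element of the finite family $\mathcal{F}$ phrased iteratively. Your closing observations ($C_{L'}=C_L$, hence $\gamma_{L'}=\gamma_L$, and frequency is preserved since lists only grow) match the paper's as well.
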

\begin{proof}
If $L$ is maximal, then we take $L'=L$ and we are done. If $L$ is not maximal then there are $v\in V(\Sigma)$ and $c\in C_L$  such that for the list assignment $L^*$ as defined in (\ref{inlarge}), $\Sigma$ is not $L^*$-colorable. Clearly $L^*(v)\subseteq L(v)$, $C_L=C_{L'}$. Thus $\gamma_L=\gamma_{L^*}$ and each color frequent for $L$ is clearly frequent for $L^*$. If $L^*$ is maximal, we are done. Otherwise, we repeat the above process, which will clearly terminate and hence obtain a maximal list assignment with the desired properties.
\end{proof}
\begin{lem}\label{freevesin}
Let $L\in \mathcal{L}_{\max}$. If $c\in C_L$ is frequent, then $c\in L(v)$ for every singleton $v$ of $\Sigma$.
\end{lem}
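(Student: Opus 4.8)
The plan is to argue by contradiction and manufacture a near $L$-coloring, which is forbidden by Proposition \ref{nonear}. Suppose $c\in C_L$ is frequent but there is some singleton $v$ with $c\notin L(v)$. Since $c$ is frequent we have $c\in C_L$, so $c\in C_L\setminus L(v)\neq\emptyset$. The first step is to feed the pair $(v,c)$ into the hypothesis $L\in\mathcal{L}_{\max}$: by the definition of a maximal list assignment (see (\ref{inlarge})), the enlarged assignment $L^*$ obtained from $L$ by replacing $L(v)$ with $L(v)\cup\{c\}$ admits an $L^*$-coloring $h$.

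Next I would pin down the color of $v$ under $h$. For every $u\neq v$ we have $L^*(u)=L(u)$, so $h(u)\in L(u)$; if in addition $h(v)\in L(v)$, then $h$ would be a genuine $L$-coloring of $\Sigma$, contradicting $L\in\mathcal{L}$. Hence $h(v)$ must be the single newly added color, i.e.\ $h(v)=c$.

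The crux is a structural observation about the color class $h^{-1}(c)$. Because $\Sigma$ is the complete expansion of a complete $k$-partite graph (the setting of Theorem \ref{equmain}), any two vertices lying in distinct parts are joined by a pair of oppositely signed edges and therefore, under the proper coloring $h$, must receive colors of distinct absolute value; in particular two such vertices cannot both carry the color $c$. Consequently every color class of $h$, and in particular $h^{-1}(c)$, is contained in a single part. Since $v$ is a singleton its part is exactly $\{v\}$, and as $h(v)=c$ we conclude $h^{-1}(c)=\{v\}$.

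Finally I would check that $h$ is a near $L$-coloring directly against the definition. For each $u\neq v$ we already have $h(u)\in L(u)$, so alternative (a) holds at $u$. At $v$ we have $h(v)=c\notin L(v)$, but $h^{-1}(c)=\{v\}$ and $c$ is frequent, so the strengthened alternative (b$'$) holds at $v$. Thus $h$ is a near $L$-coloring, contradicting Proposition \ref{nonear}, and the lemma follows. I expect the only genuinely delicate point to be the structural step confining $h^{-1}(c)$ to one part; once the complete-expansion geometry is invoked to force this, the remainder is routine verification against the definitions of $\mathcal{L}_{\max}$ and of a near $L$-coloring.
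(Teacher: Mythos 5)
Your proof is correct and follows essentially the same route as the paper: invoke maximality of $L$ to get an $L^*$-coloring, force $h(v)=c$ since $\Sigma$ is not $L$-colorable, use the singleton structure to get $h^{-1}(c)=\{v\}$, and conclude $h$ is a near $L$-coloring, contradicting Proposition \ref{nonear}. Your explicit justification of why the color class $h^{-1}(c)$ collapses to $\{v\}$ (via the opposite-signed edge pairs forcing distinct absolute values across parts) is merely a spelled-out version of the paper's one-line remark that $v$ being a singleton and $h$ being proper imply $h^{-1}(h(v))=\{v\}$.
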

\begin{proof}
Suppose to the contrary that there is a singleton $v$ such that $c\not\in L(v)$. Let $L^*$ be defined as in (\ref{inlarge}).  Since $L$ is maximal, we see that $\Sigma$ is $L^*$-colorable. Let $f$ be an $L^*$-coloring. As $\Sigma$ is not $L$-colorable, we must have $f(v)=c$. Moreover, since $v$ is a singleton and $f$ is proper, we have that $f^{-1}(f(v))=\{v\}$. Now it is easy to see that $f$ is a near $L$-coloring as  $c$ is frequent for $L$. This contradicts Proposition \ref{nonear} and hence completes the proof.
\end{proof}
In Lemma \ref{basicupper2k}, we show that for any $L\in \mathcal{L}$ the number of frequent colors with different absolute values is less than $k$. The following result gives a better upper bound, which is the key result of this section.
\begin{prop}\label{kmxi}
For any $L\in \mathcal{L}$, there are at most $2(k-\xi-1)$ frequent colors.
\end{prop}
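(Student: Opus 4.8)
The plan is to argue by contradiction and manufacture a near $L$-coloring, which is forbidden by Proposition \ref{nonear}. Suppose some $L\in\mathcal{L}$ had at least $2(k-\xi)-1$ frequent colors. By Lemma \ref{keepCLfre} I may replace $L$ by a maximal extension $L'\in\mathcal{L}_{\max}$ with $C_{L'}=C_L$ under which every color frequent for $L$ stays frequent; since this extension also preserves $\gamma_L$ and $\xi$, I lose nothing by assuming $L\in\mathcal{L}_{\max}$ from the outset. As each absolute value accounts for at most two colors, the $2(k-\xi)-1$ frequent colors contain a set $F$ of $k-\xi$ frequent colors with pairwise distinct absolute values. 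Two facts will drive the construction: first, by Lemma \ref{freevesin} every color of $F$ lies in $L(v)$ for each singleton $v$; second, counting absolute values gives $|\textup{abs}(C_L\setminus F^{\pm})|=|\textup{abs}(C_L)|-|F|=(2k+1-\gamma_L)-(k-\xi)=k+1+\xi-\gamma_L\ge k+1$ by Corollary \ref{xigam}, so there is a generous supply of absolute values outside $F$.

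I would build the near $L$-coloring by a greedy/matching scheme modelled on the three-phase procedure of Lemma \ref{basicupper2k}, but now the $\xi$ singletons are reserved as landing sites for the colors of $F$. Concretely, I would first color as many vertices of the $k-\xi$ nonsingleton parts as possible from $C_L\setminus F^{\pm}$, using Theorem \ref{signHall} to extract a good matching that realizes an honest $L$-coloring on those vertices with absolute values disjoint from those of $F$; here Corollary \ref{no2part} (every nonsingleton part has size at least $3$) and Corollary \ref{nonsingledisjoint} (no nonsingleton part has a common list color) control the color-class sizes. I would then assign the colors of $F$, available at every singleton by Lemma \ref{freevesin}, to the singletons and to any still-uncolored vertex: singletons receive them through clause (a) of a near $L$-coloring, while a stray uncolored vertex of a nonsingleton part receives a frequent color as a singleton color class and so is legitimate under clause (b$'$) even when $F$ misses its list. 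The arithmetic of Corollary \ref{xikm2} (i.e. $\xi\ge (k-1)/2$, equivalently $k-\xi\le \xi+1$) is what guarantees that the $k-\xi$ colors of $F$ together with the $\ge k+1$ outside absolute values suffice to cover all $2k+1$ vertices spread over the $k$ parts.

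The main obstacle is verifying that the greedy matching genuinely succeeds, i.e. that Hall's condition in the good-matching form of Theorem \ref{signHall} holds for the nonsingleton parts against the outside colors, and that the vertices left over for $F$ occupy at most $k-\xi$ distinct absolute values. This is precisely the bookkeeping that occupied the bulk of Lemma \ref{basicupper2k}, and the delicate new point is that, in contrast with that lemma, I must \emph{protect} the $\xi$ singletons so that they absorb frequent colors cheaply via clause (a), since $F\subseteq L(v)$ for every singleton $v$. This saving of $2\xi$ in the frequent-color budget is exactly what upgrades the bound from $2(k-1)$ to $2(k-\xi-1)$. If the matching fails, I expect to isolate an obstructing family of nonsingleton parts whose lists are confined to too few absolute values and to contradict either the maximality of $L$ or the supply estimate $|\textup{abs}(C_L\setminus F^{\pm})|\ge k+1$, thereby closing the argument.
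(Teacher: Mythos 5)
There is a genuine gap: the heart of your argument is never carried out. Everything up to the extraction of $F$ and the supply estimate $|\textup{abs}(C_L\setminus F^{\pm})|\ge k+1$ is sound preprocessing, but the actual construction of a near $L$-coloring is left as a declared intention (``the main obstacle is verifying that the greedy matching genuinely succeeds \ldots I expect to isolate an obstructing family \ldots thereby closing the argument''). That verification \emph{is} the proof; as you yourself note, the analogous bookkeeping was the bulk of Lemma \ref{basicupper2k}, and nothing in your sketch shows it goes through when the singletons are ``protected.'' Worse, the protection scheme is arithmetically impossible as stated: any two singletons lie in different parts of $\Sigma$, so in a proper coloring they must receive colors of \emph{distinct absolute values}; hence the $k-\xi$ colors of $F$ can serve as landing sites for at most $k-\xi$ of the $\xi$ singletons (and the ``stray'' vertices you also want to feed from $F$ compete for the same absolute values). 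Since $\xi$ can be as large as $k-1$ while Corollary \ref{xikm2} only gives $\xi\ge(k-1)/2$, in general $\xi>k-\xi$ and the reservation plan cannot even cover the singletons, let alone the leftovers of the greedy phase.

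The paper's proof sidesteps this entire difficulty and does not redo the three-phase argument. After reducing to $L\in\mathcal{L}_{\max}$ and fixing $F_0$ of size $k-\xi$ with $F_0\subseteq L(v_i)$ for every singleton $v_i$ (Lemma \ref{freevesin}), it \emph{enlarges} $F_0$ to a set $F_\xi$ of size exactly $k$ by choosing, for each singleton $v_i$ in turn, a color $c_{k-\xi+i}\in L(v_i)\setminus F_{i-1}^{\pm}$. It then defines a new list assignment $L'$ that replaces each singleton's list by $F_\xi^{\pm}$ (of size $2k$) and leaves all other lists alone. If $\Sigma$ were not $L'$-colorable, then $L'\in\mathcal{L}$, and by Corollary \ref{allsinfre} every one of the $2k$ colors of $F_\xi^{\pm}$ would be frequent for $L'$, contradicting Lemma \ref{basicupper2k}; so an $L'$-coloring $f'$ exists. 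Finally, after flipping signs so that $f'$ colors the singletons inside $F_\xi$, one permutes the colors assigned to the singletons (sending each singleton either its own chosen color $c_{k-\xi+i}$ or a color of $F_0$, both of which lie in its original list) to turn $f'$ into a genuine $L$-coloring of $\Sigma$ --- the desired contradiction. In other words, Lemma \ref{basicupper2k} is reused as a black box on a modified instance, and the only new combinatorics is a color permutation on the singletons; if you want to salvage your approach, this list-modification trick is the missing idea to aim for.
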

\begin{proof}
Suppose to the contrary there is an $L\in \mathcal{L}$ for which $C_L$ has at least $2(k-\xi)-1$ frequent colors. By Lemma \ref{keepCLfre}, we may assume $L$ is maximal. Let $F_0=\{c_1,c_2,\ldots,c_{k-\xi}\}$ be a set of $(k-\xi)$ frequent colors  with different absolute values. Label the singletons as $v_1,v_2,\ldots,v_{\xi}$. By Lemma \ref{freevesin}, we have $F_0\subseteq L(v_i)$ for $i\in \{1,2,\ldots,\xi\}$. For each $i\in \{1,2,\ldots,\xi\}$, in turn, we choose a color $c_{k-\xi+i}\in L(v_i)\setminus F_{i-1}^{\pm}$ and define $F_i=F_{i-1}\cup\{c_{k-\xi+i}\}$. We note that $|F_{\xi}|=k$ and $|F_{\xi}^{\pm}|=2k$. Let $L'$ be the list assignment defined by
  \begin{equation}
  L'(v)=\begin{cases}
 F_{\xi}^{\pm} & \text{if~ $v=v_i$ for some $i\in \{1,2,\ldots,\xi\}$} ,\\
 L(v) & \text{otherwise}.
\end{cases}
  \end{equation}
\noindent\textbf{Claim 1.} \emph{There is an $L'$-coloring of $\Sigma$.}
\begin{proof}[Proof of Claim 1]
Suppose to the contrary that $\Sigma$ is not $L'$-colorable. As $|L'(v)|\ge 2k$ for all $v\in V(\Sigma)$, we have $L'\in \mathcal{L}$. By Corollary \ref{allsinfre}, all colors in $F_{\xi}^{\pm}$ is frequent. Thus, there are at least $2k$ frequent colors for $L'$. This is a contradiction to Lemma \ref{basicupper2k} and hence Claim 1 holds.
\end{proof}
Let $f'$ be an $L'$-coloring of $\Sigma$. Let $S$ be $\{v_1,v_2,\ldots,v_{\xi}\}$,  the set of all singletons. Clearly, $f'(S)=\{f'(v_1)$, $f'(v_2),\ldots, f'(v_{\xi})\}$ consists of $\xi$ colors in $F_{\xi}^{\pm}$ with different absolute values. Moreover, if $f'(v_i)\in F_{\xi}^{-1}$ for some $i\in \{1,2,\ldots,\xi\}$ then we can opposite the color of $v_i$ and the resulting mapping is also an $L'$-coloring. Therefore, we may assume that $f'(S)\subseteq F_{\xi}$. Let $$S'=\{v_i\colon\,1\le i\le \xi \text{~and~} c_{k-\xi+i}\in f'(S)\}.$$  Thus  $|S'|=|f'(S)\cap\{c_{k-\xi+i}\colon\,1\le i\le \xi\}|$. As $f'(S)\subseteq F_{\xi}$ and $F_{\xi}=F_0\cup \{c_{k-\xi+i}\colon\,1\le i\le \xi\}$, we have
$$|S|=|f'(S)|=|f'(S)\cap F_0|+|f'(S)\cap \{c_{k-\xi+i}\colon\,1\le i\le \xi\}|=|f'(S)\cap F_0|+|S'|$$
and hence $|S\setminus S'|=|S|-|S'|=|f'(S)\cap F_0|$. Let $f''$ be an arbitrary bijection from $S\setminus S'$ to $f'(S)\cap F_0$ and define a mapping $f$ on $V(\Sigma)$ as follows:
\begin{equation}\label{ffpfpp}
f(v)=\begin{cases}
 f''(v) & \text{if~ $v\in S\setminus S'$} ,\\
 f'(v) & \text{otherwise}.
\end{cases}
  \end{equation}
As $F_0\subseteq L(v)$ for all $v\in S$, one easily finds that $f(v)\in L(v)$ for all $v\in V(G)$. Clearly,
$$f(S)=f''(S\setminus S')\cup f'(S')=(f'(S)\cap F_0)\cup (f'(S)\cap \{c_1,c_2,\ldots,c_{\xi}\})=f'(S)$$
That is, $f$ is obtained from $f$ by permutating the colors of all singletons, which implies that $f$ is proper. Thus $f$ is an $L$-coloring of $\Sigma$. This is a contradiction and hence completes the proof.
\end{proof}
Using Proposition \ref{kmxi}, we can improve Corollary \ref{xigam} as follows.
\begin{cor}\label{xi2gamma}
$\xi\ge 2\gamma_L$
\end{cor}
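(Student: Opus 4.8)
The plan is to re-run the double-counting estimate that already appears in the proof of Corollary~\ref{xigam}, but now feed in the sharper cap on frequent colors supplied by Proposition~\ref{kmxi}, and finally collide the result with the structural bound $k\le 2\xi+1$ coming from Corollary~\ref{xikm2}.

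Concretely, I would start from inequality (\ref{Fp}) established inside the proof of Corollary~\ref{xigam}, namely $|F'|\ge \frac{2k\gamma_L}{\xi+1}$, where $F'$ is the set of globally frequent colors for $L$. Since every globally frequent color is frequent by Definition~\ref{deffre}, $F'$ is a subset of the set of frequent colors, so Proposition~\ref{kmxi} yields $|F'|\le 2(k-\xi-1)$. Combining the two bounds gives
\[
\frac{2k\gamma_L}{\xi+1}\le 2(k-\xi-1),\qquad\text{i.e.}\qquad \frac{k\gamma_L}{\xi+1}\le k-\xi-1 .
\]

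I would then argue by contradiction. Suppose $\xi<2\gamma_L$; since these are integers this means $\xi+1\le 2\gamma_L$, whence $\frac{\gamma_L}{\xi+1}\ge \frac12$ (recall $\gamma_L>0$ by Corollary~\ref{CLSigma}). Substituting into the displayed inequality forces $\frac{k}{2}\le k-\xi-1$, that is $k\ge 2\xi+2$. This contradicts Corollary~\ref{xikm2}, which gives $\xi\ge\frac{k-1}{2}$, equivalently $k\le 2\xi+1$. Hence $\xi\ge 2\gamma_L$.

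The argument is short, so the main thing to verify --- rather than a genuine obstacle --- is the legitimacy of chaining (\ref{Fp}) with Proposition~\ref{kmxi}: one must check that (\ref{Fp}) really was proved as a bound on the globally frequent colors $F'$ (it was), so that the inclusion $F'\subseteq\{\text{frequent colors}\}$ applies and Proposition~\ref{kmxi} can be invoked. Everything else reduces to the elementary observation that $\frac{\gamma_L}{\xi+1}\ge\frac12$ exactly when $\xi+1\le 2\gamma_L$, together with the tight window $k\le 2\xi+1$ which ultimately stems from Corollary~\ref{no2part} (no part of size $2$) and $|V(\Sigma)|=2k+1$.
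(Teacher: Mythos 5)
Your proof is correct and follows essentially the same route as the paper: both combine inequality (\ref{Fp}) with the bound $|F'|\le 2(k-\xi-1)$ from Proposition \ref{kmxi} and then invoke Corollary \ref{xikm2} ($k\le 2\xi+1$) to force $\xi+1>2\gamma_L$. The only difference is cosmetic --- you argue by contradiction, whereas the paper chains the inequalities directly as $\frac{2k\gamma_L}{\xi+1}\le 2(k-\xi-1)\le k-1<k$ and reads off the conclusion.
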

\begin{proof}
Let $F'$ be the set of globally frequent colors. We use the same argument as in the proof Corollary \ref{xigam}. By (\ref{Fp}) and Propositions \ref{kmxi} and \ref{xikm2}, we have
\begin{equation}
\frac{2k\gamma_L}{\xi+1}\le |F'|\le 2(k-\xi-1)\le k-1<k,
\end{equation}
and hence $\xi+1> 2\gamma_L$. Thus, $\xi\ge 2\gamma_L$ as $\xi$ is an integer.
\end{proof}
\subsection{Lower bound on the number of frequent colors for $L\in \mathcal{L_{\max}}$ }
The main aim of this section is to show that for any maximal list assignment $L\in\mathcal{L}_{\max}$, there are at least $2k-2\xi+1$ frequent colors. This contradicts Proposition \ref{kmxi} and finally completes the whole proof of Theorem \ref{equmain}. In the following we assume $L$ is a maximal list assignment.

\begin{lem}\label{manyVfewC}
If a color $c^*\in C_L$ does not appear in the list of a singleton $v$, then there is a nonempty set $X=X(c^*)$ of singletons such that

 \textup{(a)} $|X|\ge \xi-\gamma_L+1$, and

 \textup{(b)} $|\textup{abs}(\cup_{x\in X} L(x))|\le 2k-|N_{B_L}(c^*)|$.
\end{lem}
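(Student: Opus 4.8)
The plan is to manufacture the set $X$ and both inequalities from the weak-coloring and matching-deficiency machinery already in place, reading (a) off Lemma~\ref{manysingL} and reducing (b) to a single deficiency estimate. First I would record what the hypothesis buys. Since $c^*$ is missing from the list of the singleton $v$, the contrapositive of Lemma~\ref{freevesin} shows that $c^*$ is not frequent. In particular it is not globally frequent, so $|N_{B_L}(c^*)|\le k$, and it is not frequent among singletons, so $c^*$ lies in the lists of at most $\gamma_L-1$ singletons (equivalently, at least $\xi-\gamma_L+1$ singletons avoid $c^*$); note also $v\notin N_{B_L}(c^*)$.

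Next I would produce a weak $L$-coloring by exploiting $L\in\mathcal{L}_{\max}$. Enlarging the list of $v$ to $L(v)\cup\{c^*\}$ gives a list assignment $L^*$, and maximality yields an $L^*$-coloring $f$. As $\Sigma$ is not $L$-colorable, $f$ must use the new colour, so $f(v)=c^*$; since $v$ is a singleton and distinct parts receive colours of distinct absolute values, $f^{-1}(c^*)=\{v\}$ while $f(u)\in L(u)$ for all $u\ne v$. Thus $f$ is a weak $L$-coloring, and Proposition~\ref{nearCsur} lets me pass to a surjective weak $L$-coloring $g\colon V(\Sigma)\mapsto C$ onto a representative subset $C$ of $C_L$. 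Were $g(v)\in L(v)$, then $g$ itself would be an $L$-coloring, a contradiction; hence $g(v)=c^*$ and $\{v\}$ is a singleton colour class of $g$.

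Now I would run the deficiency argument behind Lemma~\ref{manysingL}. Because $\Sigma$ is not $L$-colorable, $B_L^g$ has no good matching saturating $V_g$ (Theorem~\ref{signHall}), so a set $S\subseteq V_g$ maximizing $|S|-|\textup{abs}(N_{B_L^g}(S))|$ has positive deficiency. Lemma~\ref{manysingL} then forces $V_g\setminus S$ to contain fewer than $\gamma_L$ singletons, so at least $\xi-\gamma_L+1$ singleton colour classes lie in $S$; taking $X$ to be this set of singletons establishes (a). Since $N_{B_L^g}(\{x\})=L(x)$ for each singleton $x$, we get $\bigcup_{x\in X}L(x)\subseteq N_{B_L^g}(S)$, so $\textup{abs}(\bigcup_{x\in X}L(x))\subseteq\textup{abs}(N_{B_L^g}(S))$ and (b) reduces to the single inequality $|\textup{abs}(N_{B_L^g}(S))|\le 2k-|N_{B_L}(c^*)|$.

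The hard part is precisely this last bound, i.e. turning the abstract deficiency of $S$ into the concrete quantity $|N_{B_L}(c^*)|$. Here I would combine the good matching that saturates $V_g\setminus S$ off $(\textup{abs}(N_{B_L^g}(S)))^{\pm}$ (again Theorem~\ref{signHall}) with the excess identity $\sum_{K\in V_g}(|K|-1)=\gamma_L$, using that each $c^*$-carrying class has $c^*$ in its $B_L^g$-neighbourhood, to force at least $|N_{B_L}(c^*)|-\gamma_L+1$ absolute values to stay outside $\textup{abs}(N_{B_L^g}(S))$; since the total number of absolute values is $|\textup{abs}(C_L)|=2k+1-\gamma_L$, this is equivalent to the desired inequality. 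The delicate point, and the step I expect to demand the most care, is controlling how few colour classes the $|N_{B_L}(c^*)|$ vertices carrying $c^*$ can be packed into under the excess budget $\gamma_L$ together with the non-$L$-colorability of $\Sigma$; I would likely isolate the classes of $S$ meeting $N_{B_L}(c^*)$ and re-run a local augmenting/deficiency comparison, much as in the proofs of Proposition~\ref{repsat} and Lemma~\ref{manysingL}.
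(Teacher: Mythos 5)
Your setup, your derivation of part (a), and your reduction of part (b) to the single inequality $|\textup{abs}(N_{B_L^g}(S))|\le 2k-|N_{B_L}(c^*)|$ all coincide with the paper's proof. But that inequality is the heart of the lemma, and your proposal does not prove it: you defer it to a ``local augmenting/deficiency comparison'' whose delicate point you yourself flag as unresolved. Moreover, the route you sketch has two concrete flaws. First, the assertion that ``each $c^*$-carrying class has $c^*$ in its $B_L^g$-neighbourhood'' is false in general: the neighbourhood of a color class in $B_L^g$ is the \emph{intersection} $\cap_{u\in g^{-1}(c)}L(u)$ of the lists over the class, not their union, so a class containing one vertex whose list has $c^*$ need not be adjacent to $c^*$. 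Second, your plan to force $|N_{B_L}(c^*)|-\gamma_L+1$ absolute values outside $\textup{abs}(N_{B_L^g}(S))$ via the good matching saturating $V_g\setminus S$ yields only $|V_g|-|S|=(2k+1-\gamma_L)-|S|$ such values, so it already requires $|S|\le 2k-|N_{B_L}(c^*)|$ --- i.e., it presupposes essentially the bound you are trying to establish; the argument is circular.

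The missing idea is simpler than what you attempt. Since $g(u)\in L(u)$ for every $u\ne v$, every color $c\in C\setminus\{c^*\}$ lies in $\cap_{u\in g^{-1}(c)}L(u)$, so each class $g^{-1}(c)$ with $c\neq c^*$ is adjacent in $B_L^g$ to its own color $c$, and these colors have pairwise distinct absolute values because $C$ is representative. Hence the deficiency $|\textup{abs}(N_{B_L^g}(S))|<|S|$ forces both $g^{-1}(c^*)=\{v\}\in S$ and $c^*\notin N_{B_L^g}(S)$; otherwise $N_{B_L^g}(S)$ would already contain $|S|$ colors of distinct absolute values. Consequently every class in $S$ contains a vertex whose list omits $c^*$, and since the classes are pairwise disjoint subsets of $V(\Sigma)$ while exactly $|V(\Sigma)|-|N_{B_L}(c^*)|=2k+1-|N_{B_L}(c^*)|$ vertices omit $c^*$, we get $|S|\le 2k+1-|N_{B_L}(c^*)|$. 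Combining this with the strict deficiency inequality gives $|\textup{abs}(N_{B_L^g}(S))|\le |S|-1\le 2k-|N_{B_L}(c^*)|$, which is exactly the bound your reduction needs. This counting step is the paper's argument; without it (or a working substitute), your proposal establishes (a) but leaves (b) unproven.
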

\begin{proof}
Let $L^*$ be defined as in (\ref{inlarge}). As $L$ is maximal, there is an $L^*$-coloring $f_1$ of $\Sigma$. Since $\Sigma$ is not $L$-colorable, $f_1$ must use $c$ to color $v$. Moreover, as $v$ is a singleton, $v$ is the only vertex with color $c^*$ under $f_1$. Note that for any vertex $u$ in $\Sigma$ different from $v$, $L^*(u)=L(u)$ and hence $f_1(u)\in L(u)$. This indicates that $f_1$ is a weak $L$-coloring. By Proposition \ref{nearCsur}, there are a representative subset $C$ of $C_L$ and a weak $L$-coloring $f\colon\,V(\Sigma)\mapsto C$ such that for any $u\in V(\Sigma)$, (\RNum{1}) $f_1(u)\in L(u)$ implies $f(u)\in L(u)$  and (\RNum{2}) $f(u)\not\in L(u)$ implies $f(u)=f_1(u)$. Therefore, for any vertex $u\in V(\Sigma)\setminus \{v\}$, $f(u)\in L(u)$ as $f_1(u)\in L(u)$. Moreover, as $\Sigma$ is not $L$-colorable, $f(v)\not\in L(v)$ and hence $f(v)= f_1(v)=c^*$. We also note that $v$ is the only vertex with color $c^*$ under $f$ as $v$ is a singleton.

If there is a good matching that saturates $V_f$ then $\Sigma$ is $L$-colorable, a contradiction. Thus, by Theorem \ref{signHall}, there is a set $S\subseteq V_f$ such that $|\textup{abs}(N_{B_L^f}(S))|<|S|$. We assume further that $S$ maximizes $|S|-|\textup{abs}(N_{B_L^f}(S))|$.

Note that for any $u\in V(\Sigma)\setminus \{v\}$, we have $f(u)\neq c^*$ and $f(u)\in L(u)$. Thus, for any $c\in f(V(\Sigma))\setminus\{c^*\}$, we have $c\in \cap_{u\in f^{-1}(c)} L(u)$, that is $c$ is adjacent to $f^{-1}(c)$ in $B_L^f$. Since $|\textup{abs}(N_{B_L^f}(S))|<|S|$ and colors in $f(V(\Sigma))$ have different absolute values, we must have $f^{-1}(c^*)\in S$ and $c^*\not\in N_{B_L^f}(S)$.  Therefore every color class of $S$ must contain a vertex whose list does not contain $c^*$. It follows that
\begin{equation}\label{boundlists}
|\textup{abs}(N_{B_L^f}(S))|<|S|\le |V(\Sigma)|-|N_{B_L}(c^*)|=2k+1-|N_{B_L}(c^*)|.
\end{equation}
Now define $X$ to be the set of all singletons of $\Sigma$ whose color classes under $f$ belong to $S$. As $f^{-1}(c^*)=\{v\}$ and $f^{-1}(c^*)\in S$, we know that $v\in X$ and hence $X$ is nonempty. Clearly, $\cup_{x\in X} L(x)\subseteq N_{B_L^f}(S)$ and hence (b) holds by  (\ref{boundlists}).

Finally, since $\Sigma$ is not $L$-colorable, Lemma \ref{manysingL} implies that $V_f\setminus S$ contains fewer than $\gamma_L$ singletons, i.e., $S$ contains more than $\xi-\gamma_L$ singletons. This proves (a).
\end{proof}
We let $c^*$ be a color that is not frequent, and subject to this, maximizes $|N_{B_L}(c^*)|$. We note that $c^*$ exists since not all colors in $C_L$ are frequent by Lemma \ref{basicupper2k}. Moreover, by Corollary \ref{allsinfre}, there is a singleton $v$ such that $c^*\not\in L(v)$.  Let $X=X(c^*)$ be a set of singletons as described in Lemma \ref{manyVfewC}. Let $p=|N_{B_L}(X)|$ and label the colors in $N_{B_L}(X)$ as $c_1,c_2,\ldots,c_p$ such that the $p$-term sequence $\{|N_{B_L}(c_i)\cap X|\}$ is decreasing.
\begin{defi}\label{defbeta}
$\beta=k-|N_{B_L}(c^*)|.$
\end{defi}
We note that $\beta\ge 0$ as $c^*$ is not frequent.
\begin{prop}\label{ifbetasmall}
If $\beta\le 2(\xi-2\gamma_L+1)$ then $|N_{B_L}(c_{2k-2\xi+1})\cap X|\ge \gamma_L$.
\end{prop}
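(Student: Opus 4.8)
We are given a maximal list assignment $L\in\mathcal{L}_{\max}$, a non-frequent color $c^*$ chosen to maximize $|N_{B_L}(c^*)|$, a set $X=X(c^*)$ of singletons from Lemma \ref{manyVfewC}, and an ordering $c_1,\ldots,c_p$ of $N_{B_L}(X)$ so that $|N_{B_L}(c_i)\cap X|$ is decreasing. Writing $\beta=k-|N_{B_L}(c^*)|$, we must show that if $\beta\le 2(\xi-2\gamma_L+1)$ then the $(2k-2\xi+1)$-th color in this ordering still meets $X$ in at least $\gamma_L$ vertices, i.e.\ $|N_{B_L}(c_{2k-2\xi+1})\cap X|\ge\gamma_L$. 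Since the sequence $\{|N_{B_L}(c_i)\cap X|\}$ is non-increasing, this is a statement that the first $2k-2\xi+1$ colors are all ``heavy'' on $X$.

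\textbf{The plan.} The natural approach is a double-counting / averaging argument on the bipartite incidence between $X$ and the colors appearing in its lists, combined with the structural bounds already established. First I would record the two basic facts about $X$ from Lemma \ref{manyVfewC}: that $|X|\ge \xi-\gamma_L+1$ and that $|\mathrm{abs}(\cup_{x\in X}L(x))|\le 2k-|N_{B_L}(c^*)|=k+\beta$. The latter says the colors touching $X$ occupy few absolute values, so by pigeonhole the lists of the vertices in $X$ must heavily overlap. Concretely, each $x\in X$ is a singleton, so by our assumption $|L(x)|\ge 2k$, and all these $\ge 2k|X|$ color-incidences are distributed among at most $p\le 2|\mathrm{abs}(\cup_{x\in X}L(x))|\le 2(k+\beta)$ colors. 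This forces the average value of $|N_{B_L}(c_i)\cap X|$ to be large, namely at least $\frac{2k|X|}{p}$, and the ordering guarantees the top colors exceed this average.

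\textbf{Carrying it out.} The key computation is to show that for $i\le 2k-2\xi+1$ the decreasing sequence cannot have dropped below $\gamma_L$. Suppose for contradiction that $|N_{B_L}(c_{2k-2\xi+1})\cap X|\le\gamma_L-1$; then by monotonicity all colors $c_i$ with $i\ge 2k-2\xi+1$ contribute at most $\gamma_L-1$ each to the incidence sum $\sum_i |N_{B_L}(c_i)\cap X|=\sum_{x\in X}|L(x)|\ge 2k|X|$. I would split the sum into the first $2k-2\xi$ colors (each bounded crudely by $|X|$) and the remaining colors (each $\le\gamma_L-1$), using $p\le 2(k+\beta)$ to bound the count of the tail. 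This yields an inequality of the shape $2k|X|\le (2k-2\xi)|X|+(2(k+\beta)-(2k-2\xi))(\gamma_L-1)$, i.e.\ $2\xi|X|\le (2\beta+2\xi)(\gamma_L-1)$. Now I would substitute the lower bound $|X|\ge\xi-\gamma_L+1$ and the hypothesis $\beta\le 2(\xi-2\gamma_L+1)$, together with the standing inequality $\xi\ge 2\gamma_L$ from Corollary \ref{xi2gamma}, to derive a numerical contradiction.

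\textbf{The main obstacle.} I expect the delicate point to be making the head-versus-tail split tight enough: the crude bound $|N_{B_L}(c_i)\cap X|\le|X|$ for the leading $2k-2\xi$ colors may be too lossy, and one may instead need to exploit that $c^*$ was chosen to \emph{maximize} $|N_{B_L}(c^*)|$ among non-frequent colors. This maximality should let me assert that any color with larger neighborhood than $c^*$ is frequent, hence (by Lemma \ref{freevesin}) lies in \emph{every} singleton's list and in particular meets all of $X$; this pins the genuinely large colors more precisely and is likely what converts the averaging estimate into the stated threshold $\gamma_L$. Balancing the arithmetic so that the hypothesis $\beta\le 2(\xi-2\gamma_L+1)$ is exactly what is needed—no more, no less—will require care, and matching the index $2k-2\xi+1$ to the count $p\le 2(k+\beta)$ through the relation $\xi\ge 2\gamma_L$ is where I anticipate the proof to hinge.
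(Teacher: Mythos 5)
Your proposal is correct and takes essentially the same route as the paper: the paper splits $N_{B_L}(X)$ into the first $2k-2\xi$ colors and the tail $Y$, observes that each $x\in X$ has at least $2\xi$ list colors in $Y$, bounds $|Y|\le 2\beta+2\xi$ via Lemma \ref{manyVfewC}(b), and finishes by averaging together with $|X|\ge \xi-\gamma_L+1$ and $\xi\ge 2\gamma_L$ --- which is exactly your head-versus-tail double count written in contrapositive form, and the final arithmetic you outline does close (one gets $(\beta+\xi)\gamma_L\le\xi(\xi-\gamma_L+1)$, contradicting $\xi(\xi-\gamma_L+1)\le(\beta+\xi)(\gamma_L-1)$). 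The obstacle you anticipate is not real: the crude bound $|N_{B_L}(c_i)\cap X|\le|X|$ on the head is exactly what the paper implicitly uses, and the maximality of $c^*$ among non-frequent colors is never invoked in this proposition (it is needed only for Proposition \ref{betasmall}).
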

\begin{proof}
Let $Z=\{c_1,c_2,\ldots,c_{2k-2\xi}\}$ and $Y=N_{B_L}(X)\setminus Z$. Note that $N_{B_L}(X)=\cup_{x\in X} L(x)$.  For each $x\in X$ we have $|L(x)\cap Y|\ge |L(x)|-|Z|\ge 2k-(2k-2\xi)\ge 2\xi$. Thus, by double counting and our choice of ordering,
  \begin{eqnarray}
|Y||N_{B_L}(c_{2k-2\xi+1})\cap X|&\ge&\sum_{c\in Y}|N_{B_L}(c)\cap X|\nonumber\\
&= &\sum_{x\in X}|L(x)\cap Y| \nonumber\\
& \ge& 2\xi|X|\label{YNBL}
 \end{eqnarray}
By Proposition \ref{manyVfewC}(b) and definitions of $\beta$ and $Y$,
\begin{equation}\label{Y}
|Y|=|N_{B_L}(X)|-(2k-2\xi)\le (4k-2|N_{B_L}(c^*)|)-(2k-2\xi)=2\beta+2\xi
\end{equation}
From (\ref{YNBL}), (\ref{Y}), using Proposition \ref{manyVfewC}(a), we have
  \begin{equation}\label{NBLcapX}
|N_{B_L}(c_{2k-2\xi})\cap X|\ge\frac{2\xi|X|}{|Y|}\ge \frac{2\xi(\xi-\gamma_L+1)}{2\beta+2\xi}=\frac{\xi(\xi-\gamma_L+1)}{\beta+\xi}
 \end{equation}
Finally, by Corollary \ref{xi2gamma} and the assumption of this proposition, we have
$$\beta\gamma_L+\xi\gamma_L\le 2(\xi-2\gamma_L+1)\frac{\xi}{2}+\xi\gamma_L=\xi(\xi-\gamma_L+1).$$
Combining this with (\ref{NBLcapX}) leads to $|N_{B_L}(c_{2k-2\xi})\cap X|\ge \gamma_L$.
\end{proof}
\begin{prop}\label{betasmall}
$\beta<\frac{1}{2}(\xi-2\gamma_L+1).$
\end{prop}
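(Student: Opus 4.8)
The plan is to prove Proposition~\ref{betasmall} by a single global double-count of the colour--vertex incidences, which yields an upper bound on $\beta$, and then to compare that bound with $\tfrac12(\xi-2\gamma_L+1)$ using the two structural inequalities $k\le 2\xi+1$ (from Corollary~\ref{xikm2}) and $2\gamma_L\le\xi$ (from Corollary~\ref{xi2gamma}). Concretely, I would argue by contradiction: assume $\beta\ge\tfrac12(\xi-2\gamma_L+1)$ and derive a violation of the obvious lower bound on the total list size.

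First I would record the identity $\sum_{c\in C_L}|N_{B_L}(c)|=\sum_{v\in V(\Sigma)}|L(v)|\ge 2k|V(\Sigma)|=2k(2k+1)$, the last equality being $|V(\Sigma)|=2k+1$ from Corollary~\ref{exactorder}. I then bound the left-hand side from above by splitting $C_L$ into frequent and non-frequent colours. Every colour lies in at most $(2k+1)-(k-\xi)=k+\xi+1$ lists, because by Corollary~\ref{nonsingledisjoint} each of the $k-\xi$ nonsingleton parts contains a vertex whose list omits it; the number of frequent colours is at most $2(k-\xi-1)$ by Proposition~\ref{kmxi}; every non-frequent colour lies in at most $|N_{B_L}(c^{*})|=k-\beta$ lists by the maximality in the choice of $c^{*}$ (Definition~\ref{defbeta}); and $|C_L|\le 2|\textup{abs}(C_L)|=2(2k+1-\gamma_L)$. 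Writing $F$ for the set of frequent colours, these estimates combine to
\[
2k(2k+1)\le |F|(k+\xi+1)+\bigl(|C_L|-|F|\bigr)(k-\beta).
\]

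The right-hand side is increasing in $|F|$ (coefficient $\xi+1+\beta>0$) and in $|C_L|$ (coefficient $k-\beta>0$, since $\beta\le k-1$), and decreasing in $\beta$. Hence, under $\beta\ge\tfrac12(\xi-2\gamma_L+1)$ together with $|F|\le 2(k-\xi-1)$ and $|C_L|\le 4k+2-2\gamma_L$, I may replace $|F|,|C_L|,\beta$ by these extreme values without increasing the right-hand side. After simplification the resulting inequality asserts $2k(2k+1)<2k(2k+1)$ provided one checks the polynomial estimate
\[
k(\xi+1)<3\xi^2-3\gamma_L\xi+7\xi+2\gamma_L^2-5\gamma_L+4 .
\]
Bounding the left side by $k(\xi+1)\le(2\xi+1)(\xi+1)=2\xi^2+3\xi+1$ via $k\le 2\xi+1$, the residual inequality becomes $2\gamma_L^2-(3\xi+5)\gamma_L+(\xi^2+4\xi+3)>0$; this quadratic in $\gamma_L$ opens upward with roots $\tfrac{\xi+3}{2}$ and $\xi+1$, and since $\gamma_L\le\xi/2<\tfrac{\xi+3}{2}$ it is strictly positive. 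The contradiction $2k(2k+1)<2k(2k+1)$ then finishes the proof, so $\beta<\tfrac12(\xi-2\gamma_L+1)$.

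The main obstacle I anticipate is getting the double-count to be extremal in the right direction: one must argue carefully that, because the right-hand side is monotone in each of $|F|$, $|C_L|$ and $\beta$, substituting the extremal admissible values (and checking $|F|\le|C_L|$ so this corner is feasible) can only weaken the bound, so that a contradiction obtained at the extreme values is genuinely a contradiction for the actual $\Sigma$ and $L$. Everything after that is bookkeeping in the polynomial simplification and the final quadratic-in-$\gamma_L$ estimate, which is routine once $k\le 2\xi+1$ and $2\gamma_L\le\xi$ are invoked. I note that this bound is in fact stronger than what is needed to close the argument: since $\tfrac12(\xi-2\gamma_L+1)\le 2(\xi-2\gamma_L+1)$, it feeds directly into Proposition~\ref{ifbetasmall} to force $|N_{B_L}(c_{2k-2\xi+1})\cap X|\ge\gamma_L$, producing $2k-2\xi+1$ frequent colours and contradicting Proposition~\ref{kmxi}.
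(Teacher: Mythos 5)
Your proof is correct and is essentially the paper's own argument: the identical global double count $\sum_{c\in C_L}|N_{B_L}(c)|\ge 2k(2k+1)$ against the split of $C_L$ into frequent and non-frequent colours, with the same three bounds $|F|\le 2(k-\xi-1)$, $|N_{B_L}(c)|\le k+\xi+1$, $|N_{B_L}(c)|\le k-\beta$, and $|C_L|\le 2(2k+1-\gamma_L)$. The only difference is cosmetic bookkeeping at the end — you assume $\beta\ge\tfrac12(\xi-2\gamma_L+1)$, substitute extremal values and reach a contradiction via a quadratic in $\gamma_L$, whereas the paper isolates $\beta$ directly and bounds the resulting fraction using the same inequalities $\xi\ge(k-1)/2$, $\xi\ge\gamma_L$ and $\xi\ge 2\gamma_L$.
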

\begin{proof}
Let $F\subseteq C_L$ be the set of all frequent colors. By Proposition \ref{kmxi}, we have
\begin{equation}\label{F}
|F|\le 2 (k-\xi-1).
\end{equation}
Since $c^*$ maximizes $N_{B_L}(c^*)$ over all colors in $C_L\setminus F$, we have
\begin{equation}\label{CmF}
\sum_{c\in C_L\setminus F}|N_{B_L}(c)|\le |C_L\setminus F||N_{B_L}(c^*)|.
\end{equation}
Note that there are exactly $k-\xi$ nonsingleton parts in $\Sigma$ and each nonsingleton part contains no common color in their lists by Proposition \ref{nonsingledisjoint}. Thus,  each $c\in C_L$ appears in at most $|V(\Sigma)|-(k-\xi)$ lists, i.e.,
\begin{equation} \label{CF}
|N_{B_L}(c)|\le |V(\Sigma)|-(k-\xi)=k+\xi+1
\end{equation}
as $|V(\Sigma)|=2k+1$ by Corollary \ref{exactorder}.
It follows from (\ref{F})-(\ref{CF}) and Definitions of $\beta$ and $\gamma_L$ that
\begin{eqnarray}
\sum_{c\in C_L}|N_{B_L}(c)|&\le& |C_L\setminus F|(k-\beta)+|F|(k+\xi+1)\nonumber \\
&= & (\beta+\xi+1)|F|+(k-\beta)|C_L| \nonumber \\
&\le &2(\beta+\xi+1)(k-\xi-1)+2(k-\beta)|\textup{abs}(C_L)| \nonumber \\
&=&2(\beta+\xi+1)(k-\xi-1)+2(k-\beta)(2k+1-\gamma_L)\nonumber \\
&=&-2(k+\xi-\gamma_L+2)\beta+(\xi+1)(2k-2\xi-2)\nonumber \\
&&+(2k+1-\gamma_L)2k  \label{CNBLc}
\end{eqnarray}
On the other hand, using double counting,
$$\sum_{c\in C_L}|N_{B_L}(c)|=\sum_{v\in V(\Sigma)}|L(v)|\ge (2k+1)2k,$$
which, together with (\ref{CNBLc}), implies
$$(k+\xi-\gamma_L+2)\beta\le (\xi+1)(k-\xi-1)-k\gamma_L.$$
Moreover, by Corollaries \ref{xigam} and \ref{xikm2}, $k+\xi-\gamma_L+2\ge k+2$ and $k-\xi-1\le (k-1)/2<k/2$. Therefore,
$$\beta< \frac{\frac{k}{2}(\xi+1)-k\gamma_L}{k+2}<\frac{1}{2}(\xi+1-2\gamma_L).$$
\end{proof}

By Propositions \ref{betasmall} and \ref{ifbetasmall}, we obtain $|N_{B_L}(c_{2k-2\xi+1})\cap X|\ge \gamma_L$ and hence
$$|N_{B_L}(c_{i})\cap X|\ge \gamma_L,\text{~for~} i\in\{1,2,\ldots,2k-2\xi+1\}$$
by our ordering on colors in $\cup_{x\in X}L(x)$. Thus, we have found $2k-2\xi+1$ colors which are frequent among singletons.  This contradicts Proposition \ref{kmxi} and hence completes the proof of Theorem \ref{equmain}
\section{Discussion}
For technical reasons, we have always forbidden $0$ as a color above. However, it seems  natural to allow $0$ as a color. For $n\ge 1$, define a $n$-set $M_n=\{\pm 1,\pm 2,\ldots,\pm n/2\}$ if $n$ is even, and $M_n=\{0,\pm 1,\pm 2\ldots,\pm (n-1)/2\}$ if $n$ is odd.  M\'{a}\v{c}ajov\'{a} et al.~\cite{Raspaud2016} defined a (proper) $n$-coloring of a signed graph $\Sigma$ to be a mapping $f\colon\,V(\Sigma)\mapsto M_n$ such that for each edge $e$,
\begin{equation}
f(u)\neq \sigma(e)f(v) ~\text{if $e$ connects $u$ and $v$}.
\end{equation}
We say $\Sigma$ is $n$-colorable if $\Sigma$ admits an $n$-coloring. The \emph{chromatic number} $\chi(\Sigma)$, defined in  \cite{Raspaud2016}, is the minimum positive integer $n$ such that $\Sigma$ is $n$-colorable.  The \emph{list chromatic number} \cite{Jin2016,Schweser2017}, denoted $\chi_l(\Sigma)$, is the minimum $n$ such that for any list assignment $L$ with $L(v)\subseteq \mathbb{Z}$ and $|L(v)|\ge n$, there is an $L$-coloring of $\Sigma$. We say $\Sigma$ is \emph{chromatic-choosable} if $\chi_l(\Sigma)=\chi(\Sigma)$.
We believe that the following variant of Theorem \ref{main} may be true.
\begin{conj}\label{future}
If $|V(\Sigma)|\le \chi(\Sigma)+1$, then $\Sigma$ is chromatic-choosable.
\end{conj}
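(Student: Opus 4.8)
The plan is to run the strategy of Theorem~\ref{main}, but with the color $0$ treated as a distinguished \emph{self-conjugate} color, controlling the extra difficulties by a parity analysis of $\chi(\Sigma)$. First I would record the relation between the two chromatic numbers: since $M_n$ contains $0$ exactly when $n$ is odd, and since the $0$-class of any $M_n$-coloring is an independent set that can be recolored by one fresh conjugate pair, one gets $\chi^*(\Sigma)=2\lceil\chi(\Sigma)/2\rceil$; that is, $\chi^*(\Sigma)=\chi(\Sigma)$ when $\chi(\Sigma)$ is even and $\chi^*(\Sigma)=\chi(\Sigma)+1$ when $\chi(\Sigma)$ is odd. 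I would also check that switching is still color-preserving in the zero-allowed setting via the map $c\mapsto -c$, which fixes $0$, so switching equivalence preserves both $\chi$ and $\chi_l$; and that, exactly as in Lemma~\ref{addswitch} and the paragraph after it, one may reduce to a canonical tight signed graph. For even $\chi(\Sigma)$ this canonical graph is the complete expansion of a complete $k$-partite graph as before; for odd $\chi(\Sigma)=2k-1$ it is such a complete expansion on $k-1$ signed parts together with one extra independent ``zero part'' $V_0$ joined to every other vertex by both-sign edges and carrying no internal edges (the structure forced by the $0$-class, and one checks it indeed has $\chi=2k-1$).

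Next I would split on the parity of $\chi(\Sigma)$. When $\chi(\Sigma)=2k$ is even, the hypothesis gives $|V(\Sigma)|\le 2k+1=\chi^*(\Sigma)+1$, so Theorem~\ref{main} already yields $\chi_l^*(\Sigma)=2k$; the only gap is that the lists for $\chi_l$ may contain $0$. One is tempted to delete $0$ from every list, but this lowers list sizes to $2k-1$ and Theorem~\ref{main} no longer applies, and one cannot faithfully simulate $0$ by a conjugate pair $\{+N,-N\}$ because the set of vertices using $0$ must be independent whereas two adjacent vertices may legitimately receive $+N$ and $-N$. So even the even case must handle the self-conjugate color directly. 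When $\chi(\Sigma)=2k-1$ is odd, the hypothesis gives only $|V(\Sigma)|\le 2k=\chi^*(\Sigma)$, and one must prove colorability from lists of size $2k-1$ rather than $2k$; the entire saving comes from exploiting $0$, so here too tracking $0$ explicitly is unavoidable.

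I would therefore redo the minimal-counterexample development behind Theorem~\ref{equmain} with $0$ as a self-conjugate color and $V_0$ as a distinguished part. The bipartite identification underlying Theorem~\ref{signHall} is upgraded so that $0$ is merged only with itself while all other $\pm$ pairs merge as before; with this convention the good-matching/Hall machinery, Lemma~\ref{fourconds}, and the structural consequences (analogues of Corollaries~\ref{nonsingledisjoint}--\ref{exactorder} and of Proposition~\ref{nonear}) should go through with $0$ counted as one more frequent color and $V_0$ always able to absorb it. The real work is re-proving the two counting estimates: the upper bound on the number of frequent colors (Lemma~\ref{basicupper2k} and Proposition~\ref{kmxi}) and the lower bound (Propositions~\ref{ifbetasmall} and~\ref{betasmall}).

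The main obstacle is precisely the broken $\pm$ symmetry caused by the single color $0$. The original counting repeatedly uses that colors occur in opposite pairs, that $|F^{\pm}|$ is even, and that sums such as $\sum_{c>0}\max\{n(c),n(-c)\}$ and $\sum_{c>0}\tfrac{n(c)+n(-c)}{2}$ compare cleanly against $k$; the unpaired color $0$ contributes a stray term and shifts several of these identities and inequalities by $1$, so it is not automatic that the frequent-color lower bound still exceeds the upper bound of Proposition~\ref{kmxi}. I would attack this by isolating $0$ and the zero part $V_0$ in every double count---carrying the contribution of $0$ as an explicit additive constant and treating $V_0$, which can always absorb $0$, separately from the signed parts---and checking that the slack coming from $|V(\Sigma)|$ being odd against an even number of signed colors exactly absorbs the shift. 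Getting this bookkeeping to close is the crux; if it refuses to close directly in the odd case, I would instead establish the even case first by the above machinery and then reduce the odd case to it through a symmetrizing gadget that promotes $0$ to a genuine conjugate pair on an auxiliary part.
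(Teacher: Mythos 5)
You are attempting Conjecture \ref{future}, which the paper does not prove: it is stated explicitly as an open conjecture, and the surrounding evidence (the negative answer to Problem \ref{simpleextension}, and the unfinished draft propositions left at the end of the source, where the relation between $\chi_l^*(\Sigma)$ and $\chi_l(\Sigma)$ in the odd case carries an unresolved ``$\pm1$'') suggests the authors themselves could not settle the zero-allowed setting. So your write-up must stand on its own, and it does not: it is a research outline whose decisive steps are deferred. To give credit where due, the preliminary observations are correct and nontrivial: $\chi^*(\Sigma)=2\lceil\chi(\Sigma)/2\rceil$ (the $0$-class of an $M_n$-coloring is independent, hence recolorable by one fresh color), switching invariance survives because $c\mapsto -c$ fixes $0$, and---most importantly---$0$ cannot be faithfully simulated by a conjugate pair $\{+N,-N\}$, since two vertices joined by a single edge may legitimately receive $N$ and $-N$ while they can never both receive $0$; this correctly shows the conjecture does not follow from Theorem \ref{main} even when $\chi(\Sigma)$ is even. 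But identifying the obstacle is where the proposal stops.

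The gaps are concrete. First, the engine of the paper's method is the clash between the upper bounds on frequent colors (Lemma \ref{basicupper2k}, Proposition \ref{kmxi}) and the lower bound (Propositions \ref{ifbetasmall} and \ref{betasmall}), and every one of those double counts leans on the exact pairing of colors into opposite pairs: $|F^{\pm}|=2k$, the symmetric-set argument in Claim 1, the comparison $\sum_{c>0}\max\{n(c),n(-c)\}\ge\sum_{c>0}\tfrac{n(c)+n(-c)}{2}\ge k-i$, and so on. You assert the analogues ``should go through'' with one unpaired self-conjugate color and then concede that making the shifted bookkeeping close ``is the crux''; naming the crux is not resolving it. Second, in the odd case $\chi(\Sigma)=2k-1$ the hypothesis yields only $|V(\Sigma)|\le 2k$ with lists of odd size $2k-1$, so the entire numerology of Section 2 changes---Corollary \ref{exactorder} ($|V(\Sigma)|=2k+1$), the threshold $k+1$ in Definition \ref{deffre}, inequality (\ref{V1smaller}), the condition $|L(v)|\ge 2k$ feeding Lemma \ref{fourconds}---and none of it is redone; this is a rewrite of the whole argument, not an adaptation. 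Third, your fallback for the odd case, a ``symmetrizing gadget that promotes $0$ to a genuine conjugate pair on an auxiliary part,'' is precisely the simulation your own second paragraph proves unfaithful, so as stated it cannot work. The conjecture therefore remains open after your proposal, exactly as it is in the paper.
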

We have shown that Theorem \ref{main} indeed strengthens Noel-Reed-Wu Theorem using the notion of complete extension. One may wonder whether Noel-Reed-Wu Theorem has a direct generalization to a signed graph whose underling graph is simple.
\begin{prob}\label{simpleextension}
 Let $\Sigma$ be a signed graph whose underlying graph is simple. Does $|V(\Sigma)|\le 2\chi(\Sigma)+1$ imply that $\Sigma$ is chromatic-choosable.
 \end{prob}
We give a negative answer to  Problem \ref{simpleextension}. Let $\Sigma$ be a negative complete graph with $4$ vertices. Clearly $\chi(\Sigma)=2$ and hence $|V(\Sigma)|\le 2\chi(\Sigma)+1$ is satisfied. Label the vertices of $\Sigma$ as $v_1,v_2,v_3,v_4$. Define a list assignment $L$ as follows:
 $$L(v_1)=\{1,2\},L(v_2)=\{1,-2\},L(v_3)=\{-1,2\},L(v_4)=\{-1,-2\}.$$
 One easily checks that $\Sigma$ is not $L$-colorable. Thus $\chi_l(\Sigma)>2$ and hence $\Sigma$ is not chromatic-choosable.

%section*{Acknowledgments}
%The authors thank the referees for their careful reading and valuable suggestions.


\begin{thebibliography}{99}\addtolength{\itemsep}{-1ex}
\bibitem{Ohba2002} K. Ohba, On Chromatic-choosable graphs, J. Graph Theory 40(2)(2002) 130-135.
\bibitem{Noel2015} J. A. Noel, B. A. Reed, H. Wu, A proof of a conjecture of Ohba, J. Graph Theory 79(2)(2015) 86-102.
%\bibitem{Hu2018} L. Hu, X. Li, Every signed planar graph without cycles of length from 4 to 8 is 3-colorable, Discrete Math. 341 (2018) 513-519.
\bibitem{Jin2016}L. Jin, Y. Kang, E. Steffen, Choosability in signed planar graphs, Europ. J. Combin.  52(2016)234-243.
\bibitem{Raspaud2016} E. M\'{a}\v{c}ajov\'{a}, A. Raspaud, M. \v{S}koviera, The chromatic number of a signed graph, Electro. J. Combin. 23(2016)\#P1.14.
\bibitem{Schweser2017}T. Schweser, M. Stiebitz, Degree choosable signed graphs, Discrete Math. 340 2017 882-891.
\bibitem{Zaslavsky1982} T. Zaslavsky, Signed graph coloring, Discrete Math. 39 (1982) 215-228
\end{thebibliography}
\end{document}